%% file: main.tex
\documentclass[11pt]{article}
\usepackage{graphicx,psfrag}
\usepackage{amssymb,amsfonts,amsthm}
\usepackage{todonotes}
\usepackage{qtree}
\usepackage{amsmath,amsthm,amssymb}
\usepackage{amsfonts}
\usepackage{relsize}
\usepackage[T1]{fontenc}
\usepackage[utf8]{inputenc}
\usepackage{url}
\providecommand{\doi}[1]{\texttt{doi:#1}}
\usepackage{graphicx}
\usepackage{color}
\usepackage{subcaption,mathtools}
\usepackage{tkz-graph}
\usetikzlibrary{trees,arrows}
\usepackage{tikz}
\usetikzlibrary{positioning}
\usepackage{qtree}
\usepackage{subfiles}
\usepackage{float}
\usepackage{adjustbox}
\usepackage{xcolor}
\usepackage{soul}

\usetikzlibrary{arrows,automata}
\tikzset{
	EdgeStyle/.append style = {->} }

\newcommand{\be}{\begin{equation}}
\newcommand{\ee}{\end{equation}}

\newcommand {\N}{\mathbb{N}} 
\newcommand {\Z}{\mathbb{Z}}            

\newcommand{\Cl}{\mathrm{Cl}}

\newcommand{\Mod}[1]{\ (\mathrm{mod}\ #1)}

\numberwithin{equation}{section}
\newcounter{AbcT}

\newcommand{\nc}{\newcommand}
\nc{\meet}{\wedge}
\nc{\op}{\operatorname}\nc{\FP}{\op{FP}}\nc{\FS}{\op{FS}}\nc{\FPhat}
{\widehat{\op{FP}}}

\newtheorem {Theorem}    {Theorem}[section]

\newtheorem {Problem}    [Theorem]{Problem}

\newtheorem {Lemma}      [Theorem]    {Lemma}
\newtheorem {Corollary}   [Theorem] {Corollary}
\newtheorem {Proposition}[Theorem]    {Proposition}

\theoremstyle{remark}
\newtheorem {Remark}		 [Theorem]    {\bf{Remark}}

\newtheorem {Definition} [Theorem]    {\bf{Definition}}

\newcommand\restr[2]{{
  \left.\kern-\nulldelimiterspace 
  #1 
  \vphantom{\big|} 
  \right|_{#2} 
  }}

\begin{document}

\title{On The Generation Problem in Thompson's Groups $F_n$}

\author{Gili Golan and Eytan Sapir}
\date{}
\maketitle
\begin{abstract}
We study the generation problem in the Higman--Thompson groups $F_n$ via the
core and closure of subgroups of $F_n$ and associated automata. We give
sufficient conditions for a subset $X \subseteq F_n$ to generate $F_n$,
and provide an algorithm which verifies these conditions when $X$ is finite.
As an application, we answer a question of Aiello and Nagnibeda, motivated by
Savchuk's problem on maximal subgroups of Thompson's group $F$. Specifically,
we show that for every $n\geq 2$, the Higman--Thompson group $F_n$ contains a
maximal subgroup of infinite index which does not fix any point in $(0,1)$. The subgroup
we construct is isomorphic to $F_{2n-1}$.
\end{abstract}

\subfile{Introduction}

\subfile{Preliminaries}
\subfile{Thegenerationproblem}
\subfile{Oncoreautomata}
\subfile{Onmaximalsubgroups}

\subfile{Openproblems}

\newpage

\subfile{Bibliography}
\end{document}

%% file: Introduction.tex
\section{Introduction}\label{section:intro}

Thompson's group $F$ was introduced by R. Thompson in the  1960s and has since become a central example in geometric and combinatorial group theory. It can be realized as the group of all piecewise-linear orientation-preserving homeomorphisms of the unit interval whose breakpoints are dyadic rationals and whose slopes are powers of $2$. Brown studied the corresponding $n$-adic analogues $F_n$, $n\ge 2$, which are known as Higman--Thompson groups: $F_n$ consists of the piecewise-linear homeomorphisms of $[0,1]$ with breakpoints in $\Z[\frac1n]$ and slopes powers of $n$; thus $F=F_2$ \cite{B1}. These groups also admit a combinatorial description by pairs of finite $n$-ary trees with the same number of leaves, as well as a description as diagram groups in the sense of Guba and Sapir \cite{GubaSapir1997}.

The present paper studies the generation problem for $F_n$. For a finitely generated group $G$, the generation problem asks whether there is an algorithm which, given a finite subset $X\subseteq G$, decides whether $X$ generates $G$. This problem is undecidable in some finitely generated groups, for instance in the direct product of a rank-two free group with itself; see, for example, \cite{LyndonSchupp1977}. For Thompson's group $F=F_2$, the first author solved the generation problem in \cite{G1} using the Stallings $2$-core of a subgroup. Our first goal is to extend the relevant core and semi-core methods to the Higman--Thompson groups $F_n$ and to obtain effective sufficient conditions for generation.

The core of a subgroup of a diagram group was introduced by Guba and Sapir. For subgroups of Thompson's group $F$, the construction adapts naturally to the language of tree diagrams and automata \cite{GS1,G2}. 
We use the analogous construction for $F_n$. If $H\le F_n$, its core, denoted $\mathcal L(H)$, is a rooted edge-labeled automaton constructed from tree diagrams for a generating set of $H$ by identifying the corresponding roots and leaves and then applying two types of  foldings. Such an automaton accepts a tree diagram when the branches of the two trees can be read from the root and paired branches end at the same vertices. An element of $F_n$ is accepted if it has an accepted tree-diagram representative. The set of all elements accepted by $\mathcal L(H)$ is the closure of $H$, denoted $\Cl(H)$. This operation satisfies the usual properties of a closure operator: $H\le \Cl(H)$, $\Cl(\Cl(H))=\Cl(H)$, and $H_1\le H_2$ implies $\Cl(H_1)\le \Cl(H_2)$.

A related object, the semi-core, was introduced by the first author for subsets of $F$ \cite{G1}. We adapt the  construction to $F_n$. If $X\subseteq F_n$, the semi-core $\mathcal L_{sem}(X)$ is obtained from the tree diagrams of the elements of $X$ by identifying the corresponding roots and leaves and then applying only foldings of type $1$. Unlike the core, the semi-core depends on the chosen generating set rather than only on the subgroup it generates. This dependence is useful: identifications in $\mathcal L_{sem}(X)$ can be used to produce explicit elements of the subgroup $\langle X\rangle$ with prescribed local behavior on $n$-adic intervals.

We shall use one simple invariant. For a finite $n$-ary word $w$, let $t(w)$ be the sum of the digits of $w$ modulo $n-1$. Every point $\alpha\in \Z[\frac1n]\cap(0,1)$ has a finite $n$-ary expansion, unique up to appending zeros, and we define $t(\alpha)$ to be the sum of the digits in such an expansion modulo $n-1$. For $i\in\{0,\dots,n-2\}$, let
\[
    D_{n,i}=\{\alpha\in \Z[\tfrac1n]\cap(0,1)\mid t(\alpha)=i\}.
\]
The sets $D_{n,0},\dots,D_{n,n-2}$ are precisely the orbits of the action of $F_n$ on the $n$-adic points in $(0,1)$. Thus, for $F_2$ there is only one such orbit, while for $F_n$ with $n>2$ this congruence invariant is one of the main new features. 


Our main generation criterion is the following theorem. Here, if a word $w$ labels a path in an automaton, $w^+$ denotes the terminal vertex of that path.

\begin{Theorem}\label{intro_t_1}
Let $X\subseteq F_n$, and let $H=\langle X\rangle$. Assume that the following conditions hold.
\begin{itemize}
    \item[$(1)$] $[F_n,F_n]\subseteq \Cl(H)$.
    \item[$(2)$] $H[F_n,F_n]=F_n$.
    \item[$(3)$] For each $i\in\{0,\dots,n-2\}$, there exist an element $h_i\in H$ and a point $\alpha_i\in D_{n,i}$ such that
    \[
        h_i(\alpha_i)=\alpha_i,\qquad h_i'(\alpha_i^-)=1,\qquad h_i'(\alpha_i^+)=n.
    \]
    \item[$(4)$] There exists an inner $n$-ary word $u$, that is, a word not of the form $0^k$ or $(n-1)^k$, such that for every two finite $n$-ary words $v_1,v_2$ with $t(v_1)=t(v_2)$, the paths labeled by $uv_1$ and $uv_2$ in $\mathcal L_{sem}(X)$ have the same terminal vertex; that is,
    \[
        (uv_1)^+=(uv_2)^+ \quad \text{in } \mathcal L_{sem}(X).
    \]
\end{itemize}
Then $X$ generates $F_n$.
\end{Theorem}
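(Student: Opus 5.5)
The plan follows the strategy of the first author's solution of the generation problem for $F$: the hypotheses are used to show that $H$ contains $[F_n,F_n]$, and then condition $(2)$ finishes. Indeed, once $[F_n,F_n]\subseteq H$, condition $(2)$ gives $F_n=H[F_n,F_n]=H$. So everything reduces to proving $[F_n,F_n]\le H$. Recall that $[F_n,F_n]$ consists of the elements of $F_n$ that are the identity near $0$ and near $1$. It is simple, and it is generated by elements supported in small $n$-adic intervals. Concretely, I would show that for some $n$-adic interval $[u]$ (the interval of points whose expansion begins with the inner word $u$ of condition $(4)$), $H$ contains every element of $F_n$ supported in $[u]$. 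Conjugating by elements of $H$ then spreads this to all of $[F_n,F_n]$.

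\textbf{Step 1: using the semi-core (condition $(4)$).} Paths in $\mathcal L_{sem}(X)$ correspond to genuine elements of $\langle X\rangle$, since only foldings of type $1$ are used. If two words $w_1,w_2$ label paths from the root ending at the same vertex, there is an element $h\in H$ mapping the interval $[w_1]$ linearly onto $[w_2]$ (after possibly refining). I would take this lemma from the section on semi-cores. By $(4)$, for all $v_1,v_2$ with $t(v_1)=t(v_2)$ there is $h\in H$ mapping $[uv_1]$ onto $[uv_2]$ linearly. Combining such maps is where condition $(3)$ comes in: $h_i$ fixes $\alpha_i$ with slopes $1$ and $n$ on the two sides. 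Composing it with the linear transfers of $(4)$ produces elements of $H$ supported near points of $[u]$ that act like the standard generators $x_0$-type "one-sided shrink" maps. The goal is to build inside $H$ a copy of the rigid stabilizer of $[u]$, i.e.\ a copy of $F_n$ acting on $[u]$. That copy is generated by finitely many maps of the form "identity on the left part, linear with slope $n^{\pm1}$ on a piece", and each such map should be a product of transfers from $(4)$ and the elements $h_i$. The index $i$ is needed because, in $F_n$, the breakpoints of such generators must realize every residue class $t$. This is where one $h_i$ is used for each $D_{n,i}$.

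\textbf{Step 2: from $[u]$ to $[F_n,F_n]$ (condition $(1)$).} Let $K\le H$ be the group of elements supported in $[u]$ that we obtained. Any $g\in[F_n,F_n]$ is supported in some $[\epsilon,1-\epsilon]$. By $(1)$, $g$ is accepted by the core $\mathcal L(H)$, and the closure machinery (as in \cite{G1}) should give the following: there exists $h\in H$ agreeing with $g$ except on finitely many small $n$-adic intervals, on which the discrepancy is a "local" correction. Since $u$ is inner and the vertex $u^+$ absorbs all suffixes of the same $t$-type, each small interval can be moved into $[u]$ by an element of $H$. Conjugating $K$ by these elements of $H$ supplies all the local corrections, so $g\in H$.

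\textbf{Main obstacle.} The hardest step is the bookkeeping in Step 2. One must pass from "accepted by the core, which uses type-$2$ foldings" to an actual element of $H$ using only the semi-core transfers. One must also check that the residue invariant $t$ is respected at every point where intervals are transported. I would first try to prove a lemma of the following form: for $w$ with $t(w)=t(u v)$, some element of $H$ maps $[w]$ linearly onto $[uv]$. This would be proved by induction on folding steps in the construction of $\mathcal L(H)$ from $\mathcal L_{sem}(X)$, using $(3)$ to absorb the slope $n$ factors created by type-$2$ foldings.
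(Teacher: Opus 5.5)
\textbf{Verdict: there is a genuine gap.} Your top-level reduction matches the paper. You prove $[F_n,F_n]\le H$ and then apply $(2)$. Condition $(4)$ enters only through the fact that equal end vertices in $\mathcal L_{sem}(X)$ give genuine elements of $H$ with the pair of branches $uv_1\to uv_2$ (Remark~\ref{r_gen_semi_core_pair_of_branches}).

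The gap is Step~1, and Step~2 rests on it. You assert that products of the transfers from $(4)$ and the $h_i$ yield a copy of the rigid stabilizer $F_n|_{[u]}$ inside $H$. But none of these elements is controlled outside a small interval: $h_{uv_1,uv_2}$ is only known on $[uv_1]$, and $h_i$ only near $\alpha_i$. Nothing forces a product of them to be the identity off $[u]$. Knowing that every pair of branches of a target $y$ is realized by \emph{some} element of $H$ only says $y\in\Cl(H)$, which is hypothesis $(1)$. Passing from piecewise-$H$ to $H$ is the whole content of the theorem, and Step~1 assumes it rather than proves it.

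Step~2 has the same defect. The closure gives an $n$-adic gluing of several elements of $H$, not one $h\in H$ agreeing with $g$ off a few small intervals. Also, for $n\ge 3$ the elements that are the identity near $0$ and $1$ form $\widetilde F_n=\Cl([F_n,F_n])$, and $\widetilde F_n/[F_n,F_n]\cong\Z^{n-2}$. So "identity near the endpoints" is not a description of $[F_n,F_n]$.

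The paper closes the gap with two ideas that are missing from your proposal.

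\emph{Approximation on compact intervals} (Lemma~\ref{L_gen_commutator_coincide}). For $f\in\widetilde F_n$ and $0<a<b<1$, one builds $g_1\in H\cap\widetilde F_n$ agreeing with $f$ on $[a,b]$. The construction walks along consecutive pairs of branches of a tree-diagram of $f$. At each breakpoint $.p$ it corrects the slope just to the right of $.p$, multiplying on both sides by conjugates of powers of $h_{t(p)}$ that fix everything already matched to the left (Lemmas~\ref{L_gen_fix_change_slope} and~\ref{L_gen_coincide_change_slope}). The conjugators come from two sources:
\begin{itemize}
\item From $(1)$ and $(4)$, via Corollary~\ref{c_gen_ext_equiv}: sufficiently long $0$-extensions of $p$ are $H$-equivalent to $0$-extensions of $s_i$. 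The paper only ever gets transfers after appending long enough suffixes (Remark~\ref{r_gen_words_equiv}), and that suffices. Your proposed exact lemma $[w]\to[uv]$ for every $w$ with $t(w)=t(uv)$ is stronger than needed and fails when $w$ is not inner.
\item From the orbital lemma, Lemma~\ref{L_gen_orbitalu}, which needs an element $\tilde f\in H$ with $\tilde f'(0^+)\ne 1=\tilde f'(1^-)$. This is where $(2)$ is used before the final step. It is also what makes Lemma~\ref{L_gen_interval} work: moving intervals into a prescribed $[u']$, which your Step~2 silently relies on.
\end{itemize}
The slopes at $0$ and $1$ are then cancelled by conjugates of powers of the $h_i$, moved so that they fix $[a,b]$.

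\emph{A commutator trick} (Lemma~\ref{L_gen_contains_commutator}), needed because the construction above never yields compactly supported elements. Take $g_0,\dots,g_{n-1}\in H$ agreeing with generators $y_j$ of $F_n|_{[a,b]}$ on $[a,b]$. Build them with nested supports so that any two supports meet only inside $[a,b]$. Then $[g_i,g_j]=[y_i,y_j]$, so $[F_n|_{[a,b]},F_n|_{[a,b]}]\le H$, and simplicity of $[F_n,F_n]$ finishes.

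Only the derived subgroup of $F_n|_{[a,b]}$ is obtained this way. Your intermediate target $F_n|_{[u]}\le H$ is true a posteriori, but your sketch gives no direct route to it.
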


The first two conditions separate the problem into the part detected by the closure and the part detected by the abelianization. Condition $(1)$ says that the closure of $H$ contains the derived subgroup of $F_n$; for finitely generated $H$, this can be checked on the finite core by verifying that every vertex is a father and that there are exactly $n-1$ inner vertices. Condition $(2)$ says that the image of $H$ in the abelianization of $F_n$ is the whole abelianization. Condition $(3)$ supplies, in each congruence class $D_{n,i}$, an element of $H$ with a prescribed one-sided slope behavior at a fixed point. We give a tuple algorithm which decides this condition for finitely generated subgroups satisfying $(1)$.

Condition $(4)$ is the semi-core condition. It says that, after reading a fixed inner prefix $u$, the relevant rooted part of the semi-core has the same inner structure as the core of $F_n$: among paths starting with $u$, the terminal vertex depends only on the value of $t$. Under condition $(1)$, the corresponding identifications already hold in the core $\mathcal L(H)$. The  semi-core condition is used to ensure that these identifications can be realized by elements of the subgroup $H=\langle X\rangle$ itself, rather than only by elements of its closure.

For Thompson's group $F=F_2$, this extra condition is superfluous. In \cite{G1}, it was proved that the analogues of conditions $(1)$, $(2)$ and $(3)$ already imply that $H=F$. This gives a necessary and sufficient generation criterion for $F$ and solves the generation problem in $F$. Moreover, in later work on maximal subgroups of $F$ \cite{G2}, it was proved that, for subgroups of $F$, the analogues of conditions $(1)$ and $(2)$ already imply the analogue of condition $(3)$. Thus, for $F$, generation is detected by the closure and the image in the abelianization alone. One of the natural questions left by the present paper is whether analogous simplifications hold for $F_n$ when $n>2$.

The second application of these methods concerns maximal subgroups. Savchuk initiated the study of maximal subgroups of infinite index in Thompson's group $F$ by proving that, for every $\alpha\in(0,1)$, the stabilizer of $\alpha$ in $F$ is a maximal subgroup of infinite index, and he asked whether there are other examples \cite{Sav1,Sav2}. In answer to this question, the first author and Mark Sapir constructed an explicit maximal subgroup of infinite index in $F$ which does not fix any point of $(0,1)$ \cite{GS1}. This subgroup is the preimage of Jones' oriented subgroup $\vec F$ under an injective endomorphism of $F$; recall that $\vec F$ is isomorphic to $F_3$.

The work of the first author and Mark Sapir also provided a general method for proving the existence of many other maximal subgroups of $F$ \cite{GS1,G2}.  Aiello and Nagnibeda constructed three more explicit maximal subgroups of infinite index in $F$, relying on the techniques from \cite{GS1}; see \cite{AN2}. In \cite{G2}, the first author proved that $F$ has infinitely many non-isomorphic maximal subgroups of infinite index by constructing, for each prime $p$, a maximal subgroup of $F$ isomorphic to $F_{p+1}$.

The search for maximal subgroups naturally extends to the Higman--Thompson groups $F_n$. Savchuk's proof readily adapts to this setting, showing that for every $\alpha\in(0,1)$, the stabilizer of $\alpha$ in $F_n$ is a maximal subgroup of infinite index. Aiello and Nagnibeda provided the first example of a maximal subgroup of infinite index in $F_3$ which does not fix a point of $(0,1)$ \cite{AN}. Their example is the preimage of the Jones subgroup of $F_3$ under an injective endomorphism of $F_3$.\footnote{This subgroup is denoted $\vec F_3$ in \cite{AN}. This notation should not be confused with the subgroup $\vec F_3$ of $F$ appearing in the family $\vec F_n$ in \cite{G2}.} They asked whether, for every $n\ge2$, the group $F_n$ contains a maximal subgroup of infinite index which does not fix any point of $(0,1)$.

We answer this question affirmatively for every $n\ge2$.

\begin{Theorem}\label{intro_t_2}
For every integer $n\ge 2$, the group $F_n$ contains a maximal subgroup of infinite index which does not fix any point of $(0,1)$. Moreover, the subgroup can be chosen to be isomorphic to $F_{2n-1}$.
\end{Theorem}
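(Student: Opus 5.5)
We will follow the strategy of \cite{GS1,G2}. First we construct an explicit subgroup $H\le F_n$ isomorphic to $F_{2n-1}$. The model is Jones' oriented subgroup $\vec F\cong F_3$ and its analogues, composed with an injective endomorphism of $F_n$. Concretely, we take a set of elements $x_0,\dots,x_{k}$ whose tree diagrams replace each caret by a fixed $(2n-1)$-leaf $n$-ary "double caret" pattern. This gives an embedding $F_{2n-1}\hookrightarrow F_n$. We then conjugate or precompose it with an injective endomorphism $\phi$ of $F_n$ that acts like $x\mapsto$ "$x$ on $[0,\tfrac1n]$ extended by the identity", twisted so that the image moves every point of $(0,1)$. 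Let $H$ be the resulting subgroup. We first check that no point of $(0,1)$ is fixed by all of $H$, which amounts to exhibiting two generators whose supports cover $(0,1)$ with overlapping fixed-point-free intervals. We also check that $H$ has infinite index. For this we compute the core $\mathcal L(H)$ and show that $\Cl(H)\ne F_n$; alternatively, we find an invariant preserved by $H$, such as a congruence or parity condition on paths read in the core, that is violated by infinitely many cosets.

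The main step is maximality. We show that for every $g\in F_n\setminus H$, the subgroup $K=\langle H,g\rangle$ equals $F_n$, by verifying conditions $(1)$--$(4)$ of Theorem~\ref{intro_t_1} for $X=\{x_0,\dots,x_k,g\}$. We begin by computing $\mathcal L(H)$ explicitly. It should be a small automaton, essentially a "doubled" version of the core of $F_n$, whose vertices record the class of $t(\cdot)$ together with one extra bit. We then analyze how adding $g$ folds this automaton. Because $g\notin H$, some accepted pair of branches of $g$ ends at distinct vertices of $\mathcal L(H)$. Since $\Cl(H)=H$ is expected, the identification forced by $g$ must merge vertices that were separated only by the extra bit. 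We plan to show that any such merge collapses the core onto the core of $F_n$, giving exactly $n-1$ inner vertices, each a father, and hence condition $(1)$.

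Condition $(2)$ we handle directly. The abelianization of $F_n$ is $\Z^n$, given by the logs of the slopes at $0^+$ and $1^-$ together with the counts in each orbit class. We choose $H$ so that its image already has finite index and is compatible, and then case-split on the image of $g$. If $g$ does not raise the image to all of $\Z^n$, we replace $g$ by $gh$ or $g^{-1}hg$ and use the fact that $K$ contains elements with prescribed germs. Condition $(3)$ we obtain from explicit generators of $H$: for each class $i$, we take a generator fixing an $n$-adic point in $D_{n,i}$ with slopes $1$ and $n$ on the two sides. We will design $H$ so that this already holds inside $H$.

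The main obstacle we expect is condition $(4)$, because the semi-core depends on the actual generating set $X$, including $g$. Our plan is to show that $\mathcal L_{sem}(H\text{-generators})$ already realizes all identifications of the core below some inner prefix $u$ except those coming from the extra bit. We then argue that after adjoining $g$, and possibly conjugates $g^{h}$ for suitable $h\in H$ (which leaves $K$ unchanged), a type-1 folding identifies the two copies below some inner word. Translating by elements of $H$, which act transitively enough on inner prefixes, then propagates this identification to all $uv_1,uv_2$ with $t(v_1)=t(v_2)$. If this fails directly, the fallback is the approach of \cite{G2}: first produce an element of $K$ supported in a small $n$-adic interval that lies outside $H$, and run the argument with that element in place of $g$.
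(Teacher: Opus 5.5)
Your overall plan matches the paper's: find a closed $H\cong F_{2n-1}$ whose core is a ``doubled'' copy of $\mathcal{L}(F_n)$. Then show that for any $g\notin H$, the identification forced by a pair of branches $u\to v$ of $g$ with $u^+\neq v^+$ in $\mathcal{L}(H)$ collapses everything onto $\mathcal{L}(F_n)$, so that Theorem~\ref{gen_t_main_2} applies.

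\textbf{The main gap: $H$ is never defined.} Every step depends on that definition, and a uniform double-caret replacement fails. For instance, take $n=2$ and hang the second caret under the right child. Then the rightmost branch $2^m$ of a ternary tree becomes $1^{2m}$, so every element of the image has $\log_2 f'(1^-)$ even, and the image lies in a proper finite-index subgroup. The twisting endomorphism that is supposed to repair this is not specified. The proposal also gives no way to compute $\mathcal{L}(H)$ for the twisted group, or to prove $\Cl(H)=H$. Yet $\Cl(H)=H$ is exactly what guarantees that an arbitrary $g\notin H$ yields a pair $u\to v$ with $u^+\neq v^+$, and your maximality argument rests on that.

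The missing idea is to define $H$ directly as $\mathcal{DG}(\mathcal{A}_r)$. Here $\mathcal{A}_r$ is obtained from the tree semigroup presentation of $\mathcal{L}(F_{2n-1})$ by splitting each $(2n-1)$-ary relation into two $n$-ary ones. The position of the new caret depends on the vertex: for instance $r=f_0\,a_n\cdots a_{2n-3}\,g$ and $f_0=f\,a_1\cdots a_{n-1}$, but $f=f\,a_1\cdots a_{n-2}\,c_{n-1}$. Type-$2$ foldings then merge the new vertices $b_i$ into $c_0$ and $c_{n-1}$. Corollary~\ref{c_core_splitting_does_not_change_core} shows $\mathcal{A}_r$ is a core automaton, hence $\mathcal{L}(H)=\mathcal{A}_r$ and $H=\Cl(H)$. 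Guba--Sapir invariance of diagram groups under splittings, together with Lemma~\ref{l_core_foldings_type_2_diagram_groups_isomorphism}, gives $H\cong F_{2n-1}$.

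With this explicit core, condition $(4)$ is not the obstacle you expect. Theorem~\ref{gen_t_main_2} allows an infinite $X$. For $X=H\cup\{g\}$, the semi-core is $\mathcal{L}(H)$ with the identifications coming from $g$, followed by type-$1$ foldings. A direct check of the dependency graph on pairs of vertices shows these foldings already merge the inner vertices into $n-1$ classes (for $n\ge 3$). No conjugates of $g$ or translation argument are needed.

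\textbf{Two further steps are wrong as stated.}

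\emph{Condition $(2)$.} Suppose $H$ has infinite index and $\pi_{ab}(H)$ has finite index but is not $\Z^n$, which is your stated situation. Then $H\subsetneq H[F_n,F_n]\subsetneq F_n$, so $H$ is not maximal. Concretely, for $g\in[F_n,F_n]\setminus H$ the group $\langle H,g\rangle$ has the same image in $\Z^n$ as $H$. Replacing $g$ by $gh$ with $h\in H$ does not change $\langle H,g\rangle$, and using $g^{-1}hg$ can only shrink it. So no case analysis on $g$ helps: you must exhibit elements of $H$ whose images generate $\Z^n$, as the paper does with $h_0,\dots,h_{n-1}$.

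\emph{Infinite index.} Showing $\Cl(H)\neq F_n$ is not enough. For example, $\{f\in F_n:\log_n f'(0^+)\ \text{even}\}$ is closed, proper, and of index $2$. What works is that every finite-index subgroup contains the infinite simple group $[F_n,F_n]$. So it suffices to show $[F_n,F_n]\not\subseteq\Cl(H)$. By Corollary~\ref{c_gen_closure_commutator} this follows because $\mathcal{L}(H)$ has $2n\neq n-1$ inner vertices.
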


The proof of Theorem \ref{intro_t_2} uses the generation criterion above together with a detailed analysis of core automata. We start from the tree semigroup presentation associated with the core of $F_{2n-1}$. We then construct a sequence of semigroup presentations by applying basic splittings, and pass to the corresponding rooted semi tree automata. After that, we apply foldings of type $2$. The splitting steps preserve the associated semigroup and, by a theorem of Guba and Sapir, the isomorphism type of the corresponding diagram group; the folding steps preserve the diagram group as well. This yields a core automaton for a subgroup $H\le F_n$ with $H\cong F_{2n-1}$. Theorem \ref{intro_t_1} is then used to show that adjoining any element of $F_n\setminus H$ generates all of $F_n$, and hence that $H$ is maximal.

The paper is organized as follows. In Section \ref{section:preliminaries}, we recall the definitions of $F_n$ as a group of piecewise-linear homeomorphisms and as a group of tree diagrams, and we review the core, closure, abelianization, and derived subgroup of $F_n$. In Section \ref{section:gen_problem}, we prove the generation criterion and develop the tuple algorithm used to verify condition $(3)$ above. In Section \ref{section:core_automata}, we study rooted tree automata and the operations on the corresponding semigroup presentations that preserve the associated diagram groups. In Section \ref{section:maximal_subgroups}, we construct the maximal subgroup isomorphic to $F_{2n-1}$ and prove Theorem \ref{intro_t_2}. We conclude in Section \ref{section:open_problems} with some open problems, and with a brief discussion of subsequent related work.

\vskip .2cm

\textbf{Acknowledgements}.
The results proved in this paper formed part of the second author's M.Sc. thesis,
submitted at Ben-Gurion University of the Negev in September 2025 under the
supervision of the first author. The research of both authors was partially
supported by Israel Science Foundation grant Nos.~2322/19 and~2275/24.

%% file: Preliminaries.tex
\section{Preliminaries}
\label{section:preliminaries}
\subsection{Preliminaries on $F_n$}
We start this section by listing some definitions and notations that are relevant for the rest of this paper.
\begin{Definition}
    We call a rational number \emph{$n$-adic} if it can be expressed as $\frac{a}{n^k}$ for some integers $a,k$. Note that every $n$-adic number may be expressed as a finite sum of integer powers of $n$. Denote by $D_n:=\Z[\frac{1}{n}]\cap(0,1)$ the set of all $n$-adic numbers in the open unit interval.
\end{Definition}

\begin{Definition}\label{d_pre_thompsons_group_F_n}
    \emph{Thompson's group $F_n$} (\cite{B1,Wladis2007}) is the group of all piecewise-linear homeomorphisms of the closed unit interval with a finite set of break points contained in $\Z[\frac{1}{n}]$, such that all slopes are integer powers of $n$. The group action is composition from left to right (for elements $f,g\in F_n$ and $x\in [0,1]$, $fg(x)=g(f(x))$). We denote $f^g=g^{-1}fg$, and $[f,g]=fgf^{-1}g^{-1}$.
    
    The standard minimal generating set of $F_n$ consists of $n$ elements, denoted $x_0,\dots,x_{n-1}$. The explicit piecewise-linear formulas for these generators can be found in \cite{BrownGeoghegan1984}.
\end{Definition}

\begin{Definition}
    A \emph{finite $n$-ary word} is a finite string composed of the characters $0,1,\dots,(n-1)$. We say that a finite non-empty $n$-ary word $w$ is an \emph{inner $n$-ary word} if $w\notin\{0^\ell,(n-1)^\ell\}$ for all $\ell\in\N$. The \emph{length} of an $n$-ary word $\sigma$ is denoted by $|\sigma|$.

    Given an $n$-ary word $\sigma\equiv\sigma_1\dots\sigma_l$, we denote  
    \[.\sigma:=0.\sigma_1\dots\sigma_l=\sum_{i=1}^{l} \sigma_i\cdot n^{-i}\]
    and  
    \[[\sigma]:=[.\sigma,.\sigma+\frac{1}{n^l}]\]

    Note that every number in $D_n$ can be expressed as a finite $n$-ary word.

    We distinguish between finite $n$-ary words based on the sum of their characters modulo $n-1$. Specifically, for a word $\sigma = \sigma_1 \dots \sigma_\ell$, we define
    \[t(\sigma) := \sum_{i=1}^\ell \sigma_i \pmod{n-1}.\]
    We extend this definition to $n$-ary numbers: for $\alpha \in D_n$, we define $t(\alpha) := t(\sigma)$, where $\sigma$ is an $n$-ary representation of $\alpha$ (i.e., $\alpha=.\sigma$). We also set $t(0) = t(1) = 0$.

    For each $i \in \{0, \dots, n-2\}$, we define
    \[D_{n,i} := \{ \alpha \in D_n \mid t(\alpha) = i \}\]
\end{Definition}

\begin{Remark}\label{r_pre_Dni_dense}
    $D_{n,i}$ is dense in $[0,1]$ for all $i\in\{0,\dots,n-2\}$. 
\end{Remark}

\begin{Definition}
    An \emph{$n$-ary tree} $T$ is a finite directed rooted tree (with edges directed away from the root) where each vertex has either $n$ outgoing edges (such a vertex is called a \emph{father}) or $0$ outgoing edges (such a vertex is called a \emph{leaf}). A pair $(T_1,T_2)$ is called an \emph{$n$-ary tree-diagram} (or simply, a \emph{tree-diagram}) if $T_1$ and $T_2$ are $n$-ary trees with the same number of leaves.
\end{Definition}

\begin{figure}[H]
\centering
\begin{tikzpicture}[grow'=down, level distance=1.8cm,
  level 1/.style={sibling distance=4.5cm},
  level 2/.style={sibling distance=1.4cm},
  every node/.style={inner sep=2pt}]

\node {} 
    child { node {} 
        child { node {\small $[\frac{8}{9}, 1]$} edge from parent node[left, pos=0.5, xshift=12pt] {\scriptsize 2} }
        child { node {\small $[\frac{7}{9}, \frac{8}{9}]$} edge from parent node[pos=0.5, xshift=8pt] {\scriptsize 1} }
        child { node {\small $[\frac{2}{3}, \frac{7}{9}]$} edge from parent node[right, pos=0.5, xshift=-12pt] {\scriptsize 0} }
        edge from parent node[above left, pos=0.5, xshift=14pt] {\small 2} 
    }
    child { node {} 
        child { node {\small $[\frac{5}{9}, \frac{2}{3}]$} edge from parent node[left, pos=0.5, xshift=12pt] {\scriptsize 2} }
        child { node {\small $[\frac{4}{9}, \frac{5}{9}]$} edge from parent node[pos=0.5, xshift=8pt] {\scriptsize 1} }
        child { node {\small $[\frac{1}{3}, \frac{4}{9}]$} edge from parent node[right, pos=0.5, xshift=-12pt] {\scriptsize 0} }
        edge from parent node[pos=0.5, xshift=10pt] {\small 1} 
    }
    child { node {} 
        child { node {\small $[\frac{2}{9}, \frac{1}{3}]$} edge from parent node[left, pos=0.5, xshift=12pt] {\scriptsize 2} }
        child { node {\small $[\frac{1}{9}, \frac{2}{9}]$} edge from parent node[pos=0.5, xshift=8pt] {\scriptsize 1} }
        child { node {\small $[0, \frac{1}{9}]$} edge from parent node[right, pos=0.5, xshift=-12pt] {\scriptsize 0} }
        edge from parent node[above right, pos=0.5, xshift=-14pt] {\small 0} 
    };
\end{tikzpicture}
\caption{A ternary tree. Edges are labeled by '0','1','2'. Each vertex corresponds to a triadic ($3$-adic) interval, and the leaves form a division of the unit interval to triadic sub-intervals.}
\end{figure}

All paths in an $n$-ary tree in this paper will be \emph{rooted paths} (that is, their initial vertex is the root), so we will usually omit the word ``rooted''. A path ending in a leaf is called a \emph{branch}. Each path $p$ in an $n$-ary tree can be naturally labeled by an $n$-ary word $lab(p)$. Explicitly, we label the path consisting only of the root by the empty string. Then, if $p$ is a path, we label the paths extending $p$ by a single edge as $lab(p)i$ for $i\in\{0,\dots,n-1\}$. Throughout this paper, we will often not distinguish between a rooted path, its label, and the vertex at the end of the path. For the end vertex $v$ of a path labeled by a non-empty $n$-ary word $\sigma\equiv\sigma_1\dots\sigma_l$, we denote $.v:=.\sigma=\sum_{j=1}^l{\sigma_j\cdot n^{-j}}$ and $[v]:=[\sigma]=[.v,.v+\frac{1}{n^l}]$. Note that given an $n$-ary tree with leaves $v_1,\dots,v_m$, the intervals $[v_1],\dots,[v_m]$ partition the unit interval into $n$-adic intervals, providing an \emph{$n$-adic subdivision}.

An element of $F_n$ can be viewed as an equivalence class of  pairs of $n$-ary trees with the same number of leaves. Let $(T_1,T_2)$ be an $n$-ary tree-diagram. Let the (labels of the) branches of $T_1$ be $u_1,\dots,u_m$, and the branches of $T_2$ be $v_1,\dots,v_m$. For any $1\le j\le m$, we say that $(T_1,T_2)$ has the \emph{pair of branches $u_j\rightarrow v_j$}. The tree-diagram corresponds to a function $f \in F_n$ that maps each interval $[u_j]$ linearly onto the interval $[v_j]$.

A \emph{caret under $w$} is a tree consisting of a root vertex $w$ and $n$ leaf nodes under $w$ (often, we omit the reference to $w$ and simply call it a \emph{caret}). A tree-diagram $(T_1',T_2')$ is obtained from $(T_1,T_2)$ by adding a \emph{common caret} if $T'_1$ and $T'_2$ are obtained by adding carets under leaves $u$ and $v$ respectively, where $u\rightarrow v$ is a pair of branches of $(T_1,T_2)$. 
Elements of $F_n$ are defined as equivalence classes of tree-diagrams, where two diagrams are equivalent (denoted by $\sim$) if one can pass from one to the other via a finite sequence of adding or removing common carets. If a tree-diagram cannot be obtained by adding a common caret to another tree-diagram it is called \emph{reduced}; otherwise it is called \emph{reducible}. Every tree-diagram has a unique equivalent reduced tree-diagram (see \cite{CFP1996} for example).

Note that for every two tree-diagrams $(T_1,T_2),(T_3,T_4)$, there exist (respectively) equivalent tree-diagrams $(T_1',T_2'),(T_3',T_4')$ such that $T_2'=T_3'$. With this notation, the product of two equivalence classes of tree-diagrams $(T_1,T_2)\cdot (T_3,T_4)$ is the equivalence class of $(T_1',T_4')$. This construction provides an isomorphism between $F_n$ as a group of homeomorphisms and its description as a group of equivalence classes of tree-diagrams (for more details about the point of view as tree-diagrams, see \cite{CFP1996} for the $F_2$ case, \cite{B1} for the generalization to $F_n$, and \cite{GubaSapir1997} for the broader context of diagram groups).

\begin{figure}[H]
\centering
\begin{minipage}{\textwidth}
\centering
\begin{tikzpicture}[grow'=down, level distance=1cm,
  level 1/.style={sibling distance=2cm},
  level 2/.style={sibling distance=0.7cm},
  level 3/.style={sibling distance=0.4cm},
  every node/.style={inner sep=0pt}]
\node {}
    child { node {}
        child { node {} }
        child { node {} }
        child { node {}
            child[dashed] { node {} }
            child[dashed] { node {} }
            child[dashed] { node {} }
        }
    }
    child { node {} }
    child { node {} };
\end{tikzpicture}
\hspace{1.5cm}
\begin{tikzpicture}[grow'=down, level distance=1cm,
  level 1/.style={sibling distance=2cm},
  level 2/.style={sibling distance=0.7cm},
  level 3/.style={sibling distance=0.4cm},
  every node/.style={inner sep=0pt}]
\node {}
    child { node {} }
    child { node {} }
    child { node {}
        child { node {}
            child[dashed] { node {} }
            child[dashed] { node {} }
            child[dashed] { node {} }
        }
        child { node {} }
        child { node {} }
    };
\end{tikzpicture}
\end{minipage}

\vspace{0.3cm}
\scalebox{1.5}{$\downarrow$} \raisebox{0.5cm}{\text{reduction}}
\vspace{0.1cm}

\begin{minipage}{\textwidth}
\centering
\begin{tikzpicture}[grow'=down, level distance=1cm,
  level 1/.style={sibling distance=2cm},
  level 2/.style={sibling distance=0.7cm},
  every node/.style={inner sep=0pt}]
\node {}
    child { node {}
        child { node {} }
        child { node {} }
        child { node {} }
    }
    child { node {} }
    child { node {} };
\end{tikzpicture}
\hspace{1.5cm}
\begin{tikzpicture}[grow'=down, level distance=1cm,
  level 1/.style={sibling distance=2cm},
  level 2/.style={sibling distance=0.7cm},
  every node/.style={inner sep=0pt}]
\node {}
    child { node {} }
    child { node {} }
    child { node {}
        child { node {} }
        child { node {} }
        child { node {} }
    };
\end{tikzpicture}
\end{minipage}
\caption{The top and bottom tree-diagrams correspond to the same element of $F_3$. The bottom is obtained from the top one by a reduction of a single common caret.}
\end{figure}

\begin{Remark}\label{r_pre_unique_reduced}
    For any tree-diagram $(T_1,T_2)$, there exists a unique reduced tree-diagram $(R_1,R_2)$ equivalent to $(T_1,T_2)$ (see \cite{CFP1996,Wladis2007}). Thus, Thompson's group $F_n$ can be viewed as the group of all reduced tree-diagrams.

\end{Remark}

\begin{Definition}
    Let $f\in F_n$, and let $u,v$ be two $n$-ary words. We say that \emph{$f$ has the pair of branches $u\rightarrow v$} if there exists a tree-diagram $(T_1,T_2)$ representing $f$ such that $u\rightarrow v$ is a pair of branches of $(T_1,T_2)$. Equivalently, $f$ has the pair of branches $u\rightarrow v$ if and only if $f$ maps the interval $[u]$ linearly onto the interval $[v]$.
\end{Definition}

A well-known fact is that $F_n$ acts transitively on $D_{n,i}$ for all $i\in\{0,\dots,n-2\}$ (see, for example, \cite{Stein}). We will later use the following slightly stronger claim. 

\begin{Lemma}\label{lPairOfBranches}
    Let $u$ and $v$ be inner $n$-ary words. There exists a reduced tree-diagram of an element of $F_n$ with the pair of branches $u\rightarrow v$ if and only if $t(u)=t(v)$.
\end{Lemma}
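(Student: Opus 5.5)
The plan is to prove necessity through a counting invariant for leaves, and sufficiency by building an explicit pair of trees whose leaf counts match on both sides of $u$ and $v$, with a breakpoint forced next to $u$.

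\textbf{Necessity.} I would first prove the following invariant. If $w\equiv w_1\dots w_\ell$ is a leaf of an $n$-ary tree $T$, then the number $L_T(w)$ of leaves of $T$ to the left of $w$ satisfies $L_T(w)\equiv t(w)\pmod{n-1}$.

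The key facts are these. Every $n$-ary tree has a number of leaves congruent to $1$ modulo $n-1$, since adding a caret adds $n-1$ leaves. The leaves to the left of $w$ are exactly the leaves of the subtrees hanging below the edges $w_1\dots w_{k-1}d$ with $d<w_k$, for $k=1,\dots,\ell$. There are exactly $\sum_k w_k$ such subtrees, and each contributes $\equiv 1$ leaves.

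Now let $(T_1,T_2)$ be any tree-diagram (reduced or not) with the pair of branches $u\rightarrow v$. Then $u$ and $v$ are the $j$-th leaves of $T_1$ and $T_2$ for the same $j$. Hence $t(u)\equiv j-1\equiv t(v)$.

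\textbf{Sufficiency.} Assume $t(u)=t(v)$. Let $P_u$ be the minimal tree containing $u$ as a leaf, namely the carets along the path $u$. In $P_u$ the leaf $u$ has $a=\sum u_k$ leaves to its left and $b=\sum (n-1-u_k)$ to its right. Define $a',b'$ similarly for $v$.

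Since $u$ is inner, $a,b\ge1$, and likewise $a',b'\ge 1$. We have $a\equiv a'$ by hypothesis. Since $a+b+1=|P_u|\equiv 1\equiv a'+b'+1$, we also get $b\equiv b'$. Expanding any leaf on the left (respectively right) of $u$ by a caret increases $a$ (respectively $b$) by $n-1$. So we can add carets to $P_u$ and $P_v$ on the appropriate sides until the left counts agree and the right counts agree.

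This yields trees $T_1\ni u$ and $T_2\ni v$ with the same number of leaves, in which $u$ and $v$ occupy the same position. The diagram $(T_1,T_2)$ therefore represents some $f\in F_n$ with the pair of branches $u\rightarrow v$.

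\textbf{Reducedness (the main obstacle).} The remaining step is to make sure the reduced diagram $(R_1,R_2)$ still has $u\rightarrow v$ as a pair of branches. Reduction only deletes carets, so $R_1\subseteq T_1$. Hence $u$ lies in some leaf $w$ of $R_1$ that is a prefix of $u$, and it suffices to rule out $w$ being a proper prefix. If $w$ were a proper prefix, then $f$ would be linear on $[w]\supseteq[\text{parent}(u)]$.

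So I would use the freedom in the construction to force a breakpoint of $f$ in the interior of $[\text{parent}(u)]$. Pick a sibling $x$ of $u$ adjacent to it; it exists since $n\ge2$, on the left or on the right. When performing the balancing, I would arrange that the leaf adjacent to $u$ on that side in $T_1$ is a child of $x$, or $x$ itself. Then I would choose the expansions so that this leaf is mapped with slope different from the slope $n^{|u|-|v|}$ of $u\rightarrow v$. For example, expand $x$ one extra time in $T_1$ and compensate on the same side of $v$ in $T_2$ far from $v$, and if necessary expand one more level so the lengths differ.

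Then $f$ has a breakpoint at the common endpoint of $[u]$ and $[x]$, which is interior to $[\text{parent}(u)]$. Consequently $u$ is a leaf of $R_1$, and its partner in $R_2$ is $v$. I expect the fiddly part to be bookkeeping the slope mismatch while keeping the left and right counts balanced, especially when every available leaf on one side is forced. Adding $n-1$ extra carets in pairs on both trees gives enough slack to handle this.
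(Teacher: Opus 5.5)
The paper gives no proof of this lemma. It is stated as a standard strengthening of the transitivity of $F_n$ on each $D_{n,i}$, alongside Remark~\ref{rPreGeneralPairOfBranches}, which is also unproved. So there is no argument of the authors to compare yours with.

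Your proof is correct and self-contained.
\begin{itemize}
\item The invariant $L_T(w)\equiv t(w)\pmod{n-1}$ gives necessity for \emph{arbitrary} diagrams, not just reduced ones. That is the form the paper actually uses later, in the proof of Lemma~\ref{l_gen_closure_commutator}, where the diagram $(T_1,T_2)$ of $f$ is not assumed reduced.
\item The left/right balancing produces some diagram with the pair $u\rightarrow v$.
\item The breakpoint argument is the right way to pass to the reduced diagram. By uniqueness of the reduced form, $R_1\subseteq T_1$, and $f$ depends only on the equivalence class. A breakpoint in the interior of $[\mathrm{parent}(u)]$ therefore rules out a leaf of $R_1$ that is a proper prefix of $u$. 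The partner of $u$ in $R_2$ is then forced to be $v$, because $f([u])=[v]$.
\end{itemize}

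The one step you leave vague, the slope bookkeeping, can be made precise as follows.
\begin{itemize}
\item Say $x$ is the left sibling of $u$. Let $z$ be the leaf immediately to the left of $v$ in $P_v$; it exists since $v$ is inner.
\item Expand along right spines so that the left neighbours of $u$ in $T_1$ and of $v$ in $T_2$ are $x(n-1)^k$ and $z(n-1)^m$.
\item The slope on $[u]$ is $n^{|u|-|v|}$ and the slope on its left neighbour is $n^{|u|+k-|z|-m}$. These coincide exactly when $k-m=|z|-|v|$, so you can choose $k,m\ge 1$ avoiding this single value.
\item Once $k,m\ge 1$, the leaves $x0$ and $z0$ lie to the left of $u$ and $v$ respectively but are not adjacent to them. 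Put all remaining left-side carets below them. This achieves $a+(n-1)N_1=a'+(n-1)N_2$ with $N_1\ge k$, $N_2\ge m$ without changing the two neighbours.
\item The right-hand counts are balanced independently.
\item If $u$ has only a right sibling, argue symmetrically with $x0^k$ and $x(n-1)$.
\end{itemize}
This disposes of the worry about ``forced'' leaves, for instance when $a=1$.
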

More generally, one can show the following.
\begin{Remark}\label{rPreGeneralPairOfBranches}
    Let $u_1,...,u_k,v_1,...,v_k$ be inner $n$-ary words such that the following conditions hold: \begin{itemize}
    \item[$1.$] For all $1\le j\le k-1$, the right endpoint of $u_j$ is not bigger than the left endpoint of $[u_{j+1}]$, and the right endpoint of $[v_j]$ is not bigger than the left endpoint of $[v_{j+1}]$ for all $1\leq j\leq k-1$.
    \item[$2.$] For all $1\le j\le k-1$, $[u_j]\cap[u_{j+1}]\neq\emptyset$ if and only if $[v_j]\cap[v_{j+1}]\neq\emptyset$.
    \item[$3.$] $t(u_j)=t(v_j)$ for all $1\le j\le k$
\end{itemize}
Then there is an element $f\in F_n$ with the pairs of branches $u_j\rightarrow v_j$ for all $1\leq j \leq k$.
\end{Remark}

\subsection{The core and closure of a subgroup of $F_n$}
\label{sec:preliminaries:core_and_closure}

The core of a subgroup of a diagram group was originally defined in \cite{GS1} using the language of directed $2$-complexes. Later, it was described specifically for Thompson's group  $F$ in the narrower language of tree-diagrams and tree-automata (\cite{G2}). We follow the latter approach and provide a definition of the core for subgroups of $F_n$ using automata.

Recall that an \emph{automaton} $\mathcal{A}$ is a directed, edge-labeled graph. Every automaton considered in this paper has a distinguished vertex $r$ called the \emph{root}. We denote such an automaton by $\mathcal{A}_r$ and refer to it as a \emph{rooted automaton}. Throughout this paper, a \emph{path} in a rooted automaton $\mathcal{A}_r$ refers to a finite directed path starting from the root.
\begin{Definition}
    Let $\mathcal{A}_r$ be a rooted automaton. We say that $\mathcal{A}_r$ is a \emph{rooted $n$-ary tree-automaton} (or simply, an \emph{$n$-ary tree-automaton}) if the following conditions hold:
    \begin{itemize}
        \item[(1)] Every vertex $x$ in $\mathcal{A}_r$ has either $0$ or $n$ outgoing edges.
        \item[(2)] If a vertex $x$ in $\mathcal{A}_r$ has $n$ outgoing edges (in which case $x$ is called a \emph{father}), these edges are labeled by $i$ for each $i\in\{0,\dots,n-1\}$. The vertex at the end of the edge labeled by $i$ is called the \emph{$i$-th child} of $x$ (we sometimes refer to the $0$-th child as the \emph{leftmost child} and the $(n-1)$-th child as the \emph{rightmost child}).
        \item[(3)] If $x_1$ and $x_2$ are distinct fathers in $\mathcal{A}_r$, then there exists $i\in\{0,\dots,n-1\}$ such that the $i$-th child of $x_1$ differs from the $i$-th child of $x_2$.
        \item[(4)] Every vertex $x$ in $\mathcal{A}_r$ is reachable by a path starting from $r$.   
    \end{itemize}
\end{Definition}
Let $\mathcal{A}_r$ be an $n$-ary tree-automaton. A vertex in $\mathcal{A}_r$ with no outgoing edges is called a \emph{leaf}. Each path in $\mathcal{A}_r$ can be labeled by an $n$-ary word $\sigma$; we will usually not distinguish between a path and its label. Note that an $n$-ary word labels at most one path in $\mathcal{A}_r$. If an $n$-ary word $\sigma$ labels a path in $\mathcal{A}_r$, we say that $\sigma$ is \emph{readable} on $\mathcal{A}_r$, and we denote the end vertex of this path by $\sigma^+$. 

Similarly, given a vertex $u$ in an $n$-ary tree $T$, we say that the vertex $u$ or that the path $u$ is \textit{readable} on $\mathcal{A}_r$ if the word labeling the rooted path to $u$ is readable on $\mathcal{A}_r$; in this case, we denote the end vertex of this path in $\mathcal{A}_r$ by $u^+$. An $n$-ary tree $T$ is said to be \emph{readable} on $\mathcal{A}_r$ if every leaf of $T$ is readable on $\mathcal{A}_r$. 
\begin{Definition}\label{d_pre_core_readable_accepted}
    Let $\mathcal{A}_r$ be an $n$-ary tree-automaton and $(T_1,T_2)$ be a tree-diagram.
    \begin{itemize}
        \item We say that $(T_1,T_2)$ is \emph{readable} on $\mathcal{A}_r$ if both $T_1$ and $T_2$ are readable on $\mathcal{A}_r$.
        \item We say that $(T_1,T_2)$ is \emph{accepted} by $\mathcal{A}_r$ if it is readable on $\mathcal{A}_r$ and, for every pair of branches $u\rightarrow v$ in $(T_1,T_2)$, we have $u^+=v^+$ in $\mathcal{A}_r$.
    \end{itemize}
\end{Definition}
\begin{Lemma}\label{l_core_automata_properties}
    Let $\mathcal{A}_r$ be an $n$-ary tree-automaton. The following properties hold:
    \begin{itemize}
        \item[(1)] If a tree-diagram $(T_1,T_2)$ is accepted by $\mathcal{A}_r$, then the reduced tree-diagram equivalent to $(T_1,T_2)$ is also accepted by $\mathcal{A}_r$.
        \item[(2)] If a tree-diagram $(T_1,T_2)$ is accepted by $\mathcal{A}_r$, then any equivalent tree-diagram $(R_1,R_2)$ is accepted by $\mathcal{A}_r$ if and only if $R_1$ (respectively, $R_2$) is readable on $\mathcal{A}_r$.
        \item[(3)] If $(T_1,T_2)$ and $(R_1,R_2)$ are tree-diagrams accepted by $\mathcal{A}_r$, then the reduced tree-diagram of their product $(T_1,T_2)\cdot (R_1,R_2)$ is also accepted by $\mathcal{A}_r$.
        \end{itemize}
\end{Lemma}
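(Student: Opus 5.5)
The basic tool for all three parts is one observation about a single common caret. Suppose $(T_1,T_2)$ has the pair of branches $u\rightarrow v$ and we add a common caret under $u$ and $v$. The new pairs of branches are $ui\rightarrow vi$ for $i\in\{0,\dots,n-1\}$. If $u^+=v^+$ in $\mathcal{A}_r$, then for each $i$ the word $ui$ is readable if and only if $vi$ is readable, because both are read from the same vertex. When they are readable, $(ui)^+$ and $(vi)^+$ are both the $i$-th child of $u^+=v^+$, so they coincide.

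Conversely, suppose $ui$ and $vi$ are readable with $(ui)^+=(vi)^+$ for every $i$. Then $u^+$ and $v^+$ are fathers whose $i$-th children agree for all $i$. Condition (3) in the definition of a tree-automaton then forces $u^+=v^+$. In addition, $u$ and $v$ are themselves readable as prefixes.

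So removing a common caret preserves acceptance, and adding a common caret preserves acceptance exactly when the resulting trees remain readable.

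\emph{Part (1).} Pass from $(T_1,T_2)$ to its reduced form by a finite sequence of common-caret removals. By the converse direction above, each removal preserves acceptance. The other pairs of branches are untouched, and prefixes of readable words are readable.

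\emph{Part (2).} Let $(R_1,R_2)$ be equivalent to $(T_1,T_2)$. If $(R_1,R_2)$ is accepted, then $R_1$ and $R_2$ are readable by definition. Conversely, both diagrams reduce to the same reduced diagram $(S_1,S_2)$, which is accepted by Part (1). Now $(R_1,R_2)$ is obtained from $(S_1,S_2)$ by adding common carets. We add them one at a time, in an order in which every intermediate tree is a subtree of $R_1$ (respectively, $R_2$).

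Every such intermediate tree is readable, since each of its leaves is a prefix of a branch of $R_1$ (or $R_2$). Here one must note that readability of $R_1$ alone gives readability of $R_2$. Indeed, the carets are added in pairs at branches $u\rightarrow v$ with $u^+=v^+$, so $ui$ is readable if and only if $vi$ is. Applying the forward direction of the caret observation at each step therefore keeps the diagram accepted.

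\emph{Part (3).} Reduce both diagrams using Part (1), and choose common refinements $(T_1',T_2')$ and $(R_1',R_2')$ with $T_2'=R_1'$. I would take the refining tree to be the union of $T_2$ and $R_1$, meaning the smallest $n$-ary tree containing both. Its leaves are then leaves or extensions of leaves of one of the two trees.

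The step I expect to be the main obstacle is checking that these refinements are still readable. Suppose a leaf $w$ of $T_2$ gets extended to $ws$, where $s$ comes from the structure of $R_1$ below $w$. We must show $ws$ is readable. Note that $w$ is a prefix of some branch $w'$ of $R_1$ (or the reverse case), and $w'$ is readable. Hence $w^+$ equals the vertex reached in $R_1$'s reading, and the extension $ws$ is a prefix of a readable branch of $R_1$. So $T_2'=R_1'$ is readable.

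By Part (2), both refined diagrams are accepted, provided $T_1'$ and $R_2'$ are readable. This follows from the same argument as in Part (2): carets added under $u\rightarrow v$ with $u^+=v^+$ preserve readability on both sides.

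Finally, consider the product $(T_1',R_2')$. Each of its pairs of branches is $a\rightarrow c$, obtained by composing some $a\rightarrow b$ in $(T_1',T_2')$ with $b\rightarrow c$ in $(R_1',R_2')$. Then $a^+=b^+=c^+$, so the product is accepted. Applying Part (1) once more shows its reduced form is accepted as well.
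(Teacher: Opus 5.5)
Your proposal is correct. It is essentially the standard argument that the paper omits and defers to \cite{G2}: removing a single common caret preserves acceptance by condition (3) in the definition of an $n$-ary tree-automaton, adding common carets preserves acceptance as long as the trees stay readable, and the product is handled by using $T_2\cup R_1$ as the common refinement. (Readability of $T_2\cup R_1$ is in fact immediate, since every leaf of the union is already a leaf of $T_2$ or of $R_1$.)
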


The proof of this lemma for $F=F_2$ was  given in \cite{G2}. The proof for $F_n$ is almost identical, so we omit it.

We say that an element $f\in F_n$ is \emph{accepted} by an $n$-ary tree-automaton $\mathcal{A}_r$ if the reduced tree-diagram representing $f$ is accepted by $\mathcal{A}_r$. Equivalently, by Lemma \ref{l_core_automata_properties}, $f$ is accepted by $\mathcal{A}_r$ if there exists a tree-diagram representing $f$ that is accepted by $\mathcal{A}_r$. By Lemma \ref{l_core_automata_properties}, given an $n$-ary tree-automaton $\mathcal{A}_r$, the set of all elements in $F_n$ accepted by $\mathcal{A}_r$ forms a subgroup of $F_n$.
Following \cite{G2} we make the following definitions.  
\begin{Definition}\label{d_pre_group_accepted_by_automaton}
    Let $\mathcal{A}_r$ be an $n$-ary tree-automaton. We define the \emph{diagram group over $\mathcal{A}_r$}, denoted by $\mathcal{DG}(\mathcal{A}_r)$, as the group of all elements of $F_n$ accepted by $\mathcal{A}_r$.
\end{Definition}

Note that diagram groups over rooted tree-automata are a special case of
the diagram groups studied by Guba and Sapir \cite{GubaSapir1997,GubaSapir1999subgroups}.

\begin{Definition}\label{d_core_closed_group}
    Let $H$ be a subgroup of $F_n$. We say that $H$ is \emph{closed} if there exists some $n$-ary tree-automaton $\mathcal{A}_r$ such that $H=\mathcal{DG}(\mathcal{A}_r)$.
\end{Definition}

Let $H$ be a subgroup of $F_n$. We want to find the smallest closed subgroup of $F_n$ that contains $H$. To do so, we define a rooted $n$-ary tree-automaton called the core of $H$.

\begin{Definition}\label{d_core_stallings2_core}
    Let $H$ be a subgroup of $F_n$. Let $X=\{(T_1^i,T_2^i):i\in \mathcal{I}\}$ be a generating set of $H$, where for each $i\in\mathcal{I}$, $(T_1^i,T_2^i)$ is reduced. The \emph{core} of $H$ is defined as follows:

    For each $i\in\mathcal{I}$, we consider $T_1^i$ and $T_2^i$ as directed rooted trees, where edges are directed from fathers to their children. We identify all the roots of all the trees to a single vertex. Next, for each $i\in\mathcal{I}$ and each pair of branches $u\rightarrow v$ in $(T_1^i,T_2^i)$, we identify the leaves $u$ and $v$. 
    We then apply further identifications of two types, called \emph{foldings}:
    \begin{itemize}
        \item[(1)] A \emph{folding of type $1$} is defined as follows: if a vertex $x$ has several outgoing edges with the same label, we identify all these edges into a single edge, and all of their end vertices into a single vertex. We repeat this step as long as it is applicable.  As a result (if $X$ is infinite, then in the limit state, after possibly infinitely many foldings) we get a directed edge-labeled graph
    where every vertex $x$ has either zero or $n$ outgoing edges (in which case we will refer to their end vertices as the
    children of $x$).
        
        \item[(2)] A \emph{folding of type $2$} is defined as follows:  If $x$ and $y$ are distinct vertices in the directed graph obtained such that both $x$ and $y$ have $n$ outgoing edges and such that each child of $x$ coincides with the respective child of $y$, we identify the vertices $x$ and $y$, and each outgoing edge of $x$ with the respective outgoing edge of $y$. We repeat this step as long as it is applicable (if $X$ is infinite we may have to apply infinitely many foldings).
    \end{itemize}

    Note that if no foldings of type $1$ are applicable, then applying a folding of type $2$ cannot result in a folding of type $1$ becoming applicable. Thus, at the end of this process, no more foldings (of either type) are applicable. Note that each vertex has either $0$ or $n$ outgoing edges. If it has $n$ outgoing edges, they are labeled by $0,1,\dots,n-1$. Since no foldings of the second type are applicable, if $x_1$ and $x_2$ are distinct fathers, then there exists $i\in\{0,\dots,n-1\}$ such that the $i$-th child of $x_1$ differs from the $i$-th child of $x_2$. Additionally, each vertex $x$ in this graph was originally a vertex in $T_1^i$ or $T_2^i$ for some $i\in\mathcal{I}$, so there is a rooted path ending in $x$. This implies that the resulting graph is an $n$-ary tree-automaton. It is called the \emph{core} of $X$, denoted by $\mathcal{L}(X)$. Note that $\mathcal{L}(X)$ accepts all the elements of $X$.

    The \emph{semi-core} of $X$ is defined to be the automaton obtained after applying all foldings of type $1$, without applying any foldings of the second type. Note that the semi-core of $X$ might not be an $n$-ary tree-automaton, since there could be two distinct fathers with the same children.

    In \cite{GS1}, it was shown that for any subgroup $H$ of $F_n$ and any generating sets $X_1, X_2$ of $H$, it holds that $\mathcal{L}(X_1)=\mathcal{L}(X_2)$. Thus, the core does not depend on the generating set. This leads us to define the core of $H$ as the core of any of its generating sets. We denote the core of $H$ by $\mathcal{L}(H)$. 
\end{Definition}
Note that unlike the core, the semi-cores of different generating sets of a subgroup $H\leq F_n$ might not be isomorphic. Note also that if $H\leq F_n$, then every element in $H$ is accepted by the core $\mathcal{L}(H)$.

\begin{figure}[H]
\centering
\begin{tikzpicture}[->, >=stealth', shorten >=1pt, auto, node distance=2.8cm, semithick]
  \tikzstyle{every state}=[fill=white, draw=black, text=black, minimum size=10mm, font=\small]

  \node[state] (r) {$r$};
  \node[state] (a1) [below=of r] {$a_1$};
  \node[state] (f) [left=of a1, xshift=-1cm] {$f$};
  \node[state] (g) [right=of a1, xshift=1cm] {$g$};
  \node[state] (a0) [below=of a1] {$a_0$};

  \path (r) edge [bend right=15] node[above left] {0} (f)
        (r) edge node {1} (a1)
        (r) edge [bend left=15] node[above right] {2} (g);

  \path (f) edge [loop left] node {0} (f)
        (f) edge [bend right=20] node[below left] {2} (a0)
        (f) edge [bend left=10] node[above] {1} (a1);

  \path (g) edge [bend left=20] node[below right] {0} (a0)
        (g) edge [bend right=10] node[above] {1} (a1)
        (g) edge [loop right] node {2} (g);

  \path (a1) edge [loop right] node {0,2} (a1)
        (a1) edge [bend left=25] node[right] {1} (a0);

  \path (a0) edge [loop left] node {0,2} (a0)
        (a0) edge [bend left=25] node[left] {1} (a1);

\end{tikzpicture}
\caption{The core of $F_3$, $\mathcal{L}(F_3)$.}
\end{figure}


\begin{Definition}\label{d_core_closure}
    Let $H$ be a subgroup of $F_n$. We define the \emph{closure of $H$}, denoted by $\Cl(H)$, as the subgroup of $F_n$ of all elements accepted by $\mathcal{L}(H)$.
\end{Definition}

The closure operation satisfies  standard properties of a closure operator. Namely, if $H$ is a subgroup of $F_n$, then $H\le\Cl(H)$ and $\Cl(\Cl(H))=\Cl(H)$; furthermore, if $H_1\le H_2$ are subgroups of $F_n$, then $\Cl(H_1)\le\Cl(H_2)$.

Let $H$ be a subgroup of $F_n$. A function $f \in  F_n$ is said to be a \textit{piecewise-$H$} function if there is a finite subdivision of the interval $[0, 1]$ such that on each interval $J$ in the subdivision, $f$
coincides with the restriction of some function $h\in H$ to the interval $J$. We note that since all breakpoints of elements in $F_n$ are
$n$-adic fractions, a function $f\in F_n$ is a piecewise-$H$ function if and only if there is an $n$-adic
subdivision of the unit interval into finitely many $n$-adic sub-intervals such that on each $n$-adic interval in the subdivision, $f$ coincides with some function in $H$. Equivalently, a function $f \in F_n$ is a
piecewise-$H$ function if and only if it has a (not necessarily reduced) tree-diagram $(T_1, T_2)$
such that each pair of branches $u\to v$ of $(T_1, T_2)$ is a pair of branches of some element in $H$. The following lemma was proved in \cite{G1} for the case $n=2$. The proof for any $n$ is similar. 

\begin{Lemma}\label{l_pre_piecewiseH}
    Let $H$ be a subgroup of $F_n$. Then the closure of $H$ is the subgroup of $F_n$ of all $n$-adic piecewise-$H$ functions (piecewise-$H$ functions glued in $n$-adic points). In particular, $H$ is closed  
    if and only if every $n$-adic piecewise-$H$ function $f \in F_n$ belongs to $H$. 
\end{Lemma}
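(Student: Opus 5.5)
The statement is Lemma~\ref{l_pre_piecewiseH}: $\Cl(H)$ is exactly the set of $n$-adic piecewise-$H$ functions. I will prove the two inclusions separately, working throughout with the tree-diagram characterization of piecewise-$H$ functions given just before the lemma.

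\emph{Piecewise-$H$ functions lie in $\Cl(H)$.} Let $f$ be a piecewise-$H$ function. Choose a tree-diagram $(T_1,T_2)$ for $f$ such that each pair of branches $u\to v$ is a pair of branches of some $h\in H$. Since $h$ is accepted by $\mathcal L(H)$, some tree-diagram of $h$ containing the pair $u\to v$ is accepted; this uses Lemma~\ref{l_core_automata_properties}(2), and needs that $u,v$ are readable, which I address below. Hence $u,v$ are readable and $u^+=v^+$. Doing this for every pair shows that $(T_1,T_2)$ is readable and accepted, so $f\in\Cl(H)$.

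The readability point is the one subtle step. If $h$ has the pair of branches $u\to v$, take the reduced diagram $(R_1,R_2)$ of $h$, which is accepted. Add common carets until $u$ is a leaf of the first tree (or is below a leaf). Then $u$ corresponds to a leaf $w$ of $R_1$ together with a suffix $s$, and $v$ is $w's$, where $w\to w'$ is a pair in $(R_1,R_2)$. Since $w^+=w'^+$ and paths are deterministic, $s$ is readable from $w^+$ exactly when it is readable from $w'^+$, and then $(ws)^+=(w's)^+$. The remaining issue is that $s$ might not be readable, if $w^+$ is a leaf of the core. I expect this to be the main obstacle. To handle it, I would show that every vertex $u$ of a tree $T_1$ with $u$ lying above or equal to a leaf of $R_1$ is readable. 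That is immediate, since all vertices of $R_1$ are readable. So I would restrict to diagrams of $f$ in which each pair comes from a reduced diagram of some $h$ with the pair appearing exactly, i.e. $u\to v$ lies above or at the leaves of $R_1$ and $R_2$. One can arrange this by subdividing the pieces of $f$ only as far as the $R$'s allow, refining each piece $[u]$ to leaves of $R_1^h$ where needed. Refinement is always possible, because a pair of branches $u\to v$ of $f$ that is a pair of $h$ may be replaced by the children pairs $ui\to vi$, which are still pairs of $h$.

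\emph{Elements of $\Cl(H)$ are piecewise-$H$.} Conversely, suppose $f\in\Cl(H)$ has an accepted diagram $(T_1,T_2)$. For each pair $u\to v$ we have $u^+=v^+$ in $\mathcal L(H)$. The core is obtained from the generators' diagrams by identifications, so I would prove by induction along the folding process the following claim: if two readable words $u,v$ satisfy $u^+=v^+$, then there is $h\in H$ with the pair of branches $u\to v$, or $u,v$ can be refined so that this holds piecewise. For the initial identifications, the claim follows from the generators and their inverses. Chains of identifications are handled by composition ($h_1h_2$). For type-1 folding, if $x$ and $y$ are identified, then their $i$-th children are identified, which corresponds to appending $i$. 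A type-2 folding of $x,y$ whose children all coincide pieces together $n$ elements $h_i$, one for each child, and this is exactly where one gets only a piecewise-$H$ map on $[u]$ rather than a single element. So the claim must be stated as: $u^+=v^+$ implies that there is a piecewise-$H$ function mapping $[u]$ linearly onto $[v]$. With that claim in hand, gluing these local maps over all pairs of $(T_1,T_2)$ shows that $f$ is piecewise-$H$. The ``in particular'' clause then follows immediately.
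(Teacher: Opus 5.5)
Your inclusion of $\Cl(H)$ in the set of piecewise-$H$ functions is acceptable as a sketch. It is the induction over the foldings that the paper outlines after Lemma~\ref{l_pre_pair_of_branches_H}: type~1 foldings keep the subdivision depth, and a type~2 folding costs one more level. For Lemma~\ref{l_pre_piecewiseH} itself the paper gives no proof and only cites \cite{G1}.

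The gap is in the other inclusion, exactly at the step you flagged, and your remedy does not work. Refinement only makes pieces deeper. The bad case is a piece $[u]$ lying strictly below a readable word $w$ with $w^+$ a leaf of $\mathcal L(H)$, and there you would have to coarsen; you give no reason why that is possible. In fact the class of diagrams you propose to restrict to can be empty. Let $H=\langle h\rangle$ where $h$ has support $(a_1,b_1)\cup(a_2,b_2)$ with $b_1<a_2$, and let $f$ agree with $h$ on $[0,b_1]$ and be the identity on $[b_1,1]$. On a leaf $[u]$ of $T_1$ meeting $(a_2,b_2)$ in an interval, $f$ is the identity. The only element of $H$ that is the identity there is $h^0=1$, whose reduced diagram is the one-vertex tree. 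So $u$ would have to be the root, forcing $f=1$.

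The missing ingredient is that $f$ cannot genuinely switch between elements of $H$ inside such a $[w]$. Suppose $w$ is readable and $w^+$ is a leaf of $\mathcal L(H)$. Suppose two adjacent pieces of $f$ inside $[w]$, on which $f$ agrees with $h_1$ and $h_2$, meet at a point $p$ in the interior of $[w]$.
\begin{itemize}
\item[(i)] The element $g=h_1h_2^{-1}$ (the map $x\mapsto h_2^{-1}(h_1(x))$) lies in $H$ and fixes $p$.
\item[(ii)] Let $u$ be a leaf of the reduced, hence accepted, diagram of $g$ with $p\in[u]$. Then $u$ is readable, so it cannot strictly extend $w$. It cannot be incomparable with $w$ either, because $p$ is interior to $[w]$. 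Thus $u$ is a prefix of $w$, and $g$ maps $[u]\supseteq[w]$ affinely onto some $[v]$.
\item[(iii)] Such a map with an $n$-adic fixed point in the interior of $[u]$ is the identity. Otherwise $v\equiv us$ or $u\equiv vs$ with $s$ nonempty, and the fixed point is $.u\,sss\cdots$ (resp.\ $.v\,sss\cdots$). This is $n$-adic only when $s$ is $0^k$ or $(n-1)^k$, which makes $p$ an endpoint of $[u]$ (resp.\ $[v]$), a contradiction.
\item[(iv)] Hence $h_1=h_2$ on $[w]$, so $f$ agrees on all of $[w]$ with a single $h\in H$. This $h$ is affine on $[w]$, since the leaf of its reduced domain tree above a piece inside $[w]$ is readable, hence a prefix of $w$.
\item[(v)] Therefore the subtrees below $w$ in $T_1$ and below its image in $T_2$ consist of common carets and can be removed.
\end{itemize}
Once this is done for every such $w$, all leaves are readable. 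Your own analysis then shows the resulting diagram is accepted: refine to the leaves of $R_1^h$ when $u$ lies above them, and otherwise use $w\to w'$ plus a readable suffix.
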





The following remark follows directly from Lemma \ref{l_pre_piecewiseH}.

\begin{Remark}\label{r_pre_closure_orbit}
    Let $H$ be a subgroup of $F_n$. The orbits of the action of $H$ on $[0,1]$ coincide with the orbits of the action of $\Cl(H)$.  
\end{Remark}

We will also make use of the following lemma, which was proved in \cite{G1} for $F$ (see \cite[Lemma 4.6]{G1}). The proof adapts readily to the general case; however, we provide an intuitive explanation below.

\begin{Lemma}\label{l_pre_pair_of_branches_H}Let $H\le F_n$, and let $u,v$ be two $n$-ary words that label paths in $\mathcal{L}(H)$. Then $u^+=v^+$ if and only if there exists an integer $k\ge 0$ such that for every $n$-ary word $w$ of length at least $k$, there exists an element $h\in H$ that has the pair of branches $uw\rightarrow vw$.\end{Lemma}

First, suppose there exists $k \ge 0$ such that for every word $w$ with $|w| \ge k$, there is an element $h \in H$ with the pair of branches $uw \to vw$. By Lemma \ref{l_pre_piecewiseH}, it follows that there exists an element $f \in \Cl(H)$ with the pair of branches $u \to v$. So $f$ is accepted by $\mathcal{L}(H)$ and therefore $u^+ = v^+$.

The other direction can be shown by induction on the iterative construction of the core $\mathcal{L}(H)$. Recall that the core is obtained by identifying roots and leaves of tree-diagrams for a generating set, followed by a sequence of foldings. It can be shown that each type of folding preserves the validity of the claim: specifically, a type 1 folding leaves the value of $k$ unchanged, while a type 2 folding increments it by 1.

\subsection{On the derived subgroup $[F_n,F_n]$}
\label{sec:preliminaries:F_n'}

Recall that $F_n$ admits an infinite presentation \cite{B1} with generators $x_0, x_1, x_2, \dots$ subject to the relations:
$$x_k^{-1} x_i x_k = x_{i+n-1} \quad \text{for all } k < i.$$


The abelianization of $F_n$, $F_n / [F_n,F_n]$, is isomorphic to $\mathbb{Z}^n$. Imposing commutativity on the generators yields the relations $x_i = x_{i+n-1}$ for all $i \ge 1$. Therefore, the image of any generator $x_m$ (for $m \ge 1$) from the infinite generating set in the abelianization is simply $[x_j]$, where $j \in \{1, \dots, n-1\}$ is the unique integer such that $m \equiv j \pmod{n-1}$. Thus, the abelianization is the free abelian group generated by the $n$ distinct equivalence classes $[x_0], [x_1], \dots, [x_{n-1}]$. The derived subgroup $[F_n,F_n]$ is precisely the kernel of the natural projection onto this free abelian group.

Recall that the elements of $F_n$ act as piecewise-linear homeomorphisms of $[0,1]$. Since $0$ and $1$ are fixed points, the slope of any commutator must be trivial at $0$ and $1$. This motivates the following definition.

\begin{Definition}\label{d_pre_inner_support}
    We define the \emph{inner-support of $F_n$} to be $\widetilde{F}_n:=\{f\in F_n \mid f'(0^+)=1 \text{ and } f'(1^-)=1\}$. 
\end{Definition}

Note that for each $n$, $[F_n,F_n]\leq \widetilde{F_n}$ (by the chain rule for derivatives at $0^+$ and $1^-$), and that $\widetilde{F}_2=[F_2,F_2]$. In fact, the following is the standard abelianization map of $F_2$: $$f\rightarrow(\log_2f'(0^+),\log_2f'(1^-))$$

We will now show that $\widetilde{F}_n$ is precisely the closure of the derived subgroup of $F_n$. To do so, we use the following Lemma. Although it is well-known, we include a proof for completeness.

\begin{Lemma}\label{l_pre_commutator_pair_of_branches}
    Let $u,v$ be inner $n$-ary words such that $t(u)=t(v)$. Then there is a tree-diagram of an element of $[F_n,F_n]$ with the pair of branches $u\rightarrow v$.
\end{Lemma}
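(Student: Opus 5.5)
Starting from Lemma \ref{lPairOfBranches}, we take an element $f\in F_n$ with the pair of branches $u\to v$, and then correct it by an element supported away from $[u]$, so that the product lies in $[F_n,F_n]$ and still maps $[u]$ linearly onto $[v]$. The point is that $F_n/[F_n,F_n]\cong\mathbb Z^n$ is detected by the abelianization map, and elements supported in small intervals near $0$ and $1$ can realize whatever abelianization class we need.

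\textbf{Step 1.} Since $u,v$ are inner with $t(u)=t(v)$, Lemma \ref{lPairOfBranches} gives $f\in F_n$ with the pair of branches $u\to v$.

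\textbf{Step 2.} Because $u$ is inner, $[u]$ lies strictly inside $(0,1)$. Choose $n$-adic points $0<a<.u$ and $.u+n^{-|u|}<b<1$. Consider the subgroup $K\le F_n$ of elements supported in $[0,a]\cup[b,1]$. We will show that $K$ surjects onto the abelianization $\mathbb Z^n$. A natural way to see this is through the standard identification of the subgroup of $F_n$ supported on an $n$-adic interval $[0,n^{-k}]$ with a copy of $F_n$ (conjugate by rescaling). Such a copy contains conjugates of $x_0,x_1,\dots,x_{n-1}$ (for instance, under the infinite presentation, elements $x_m$ with $m$ large are supported near $1$, and $x_0^{-k}x_mx_0^{k}$-type conjugations shift supports). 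Since conjugation does not change the abelianization image, $K$ hits every generator class $[x_0],\dots,[x_{n-1}]$. Concretely, I would use that for $m$ large, the elements $x_m, x_{m+1},\dots,x_{m+n-2}$ are supported in $[b,1]$ and represent the classes $[x_1],\dots,[x_{n-1}]$. The class $[x_0]$ is then reached by a conjugate of $x_0$, or of $x_0x_1^{-1}$, squeezed into $[0,a]$ via a suitable element.

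\textbf{Step 3.} Pick $g\in K$ with the same abelianization image as $f^{-1}$. Then $fg\in[F_n,F_n]$, and since $g$ is the identity on $[v]\subseteq(a,b)$, the product $fg$ (which applies $f$ then $g$) still maps $[u]$ linearly onto $[v]$. This gives the required tree-diagram.

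The main obstacle is Step 2: certifying that elements supported near the endpoints realize all $n$ abelianization classes, in particular the class of $x_0$, whose support must touch $0$. I would handle this by conjugating the generating set of $F_n$ into the copy of $F_n$ supported on $[0,n^{-k}]$, respectively $[1-n^{-k},1]$, via a suitable element of $F_n$.

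A cleaner alternative would be to find an explicit description of the abelianization via slopes at $0,1$ together with a $t$-type invariant, but that is not yet available in the paper, so I prefer the conjugation argument.
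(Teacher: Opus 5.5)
Your strategy is valid and differs from the paper's: you take $f$ with $u\to v$ from Lemma \ref{lPairOfBranches} and cancel its abelianization class with some $g$ supported away from $[v]$. However, Step 2 is not justified as written, and the mechanism you rely on cannot work.

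The copy of $F_n$ supported on $[0,n^{-k}]$ is obtained by conjugating with $x\mapsto n^{-k}x$. That map is not in $F_n$, so the copy does not consist of $F_n$-conjugates of the generators, and abelianization classes are not preserved. For instance, the rescaled copy of $x_0$ has slope $1$ at $1^-$. Every $F_n$-conjugate of $x_0$ has the same slopes at $0^+$ and $1^-$ as $x_0$ (all elements fix $0$ and $1$), and $x_0'(1^-)\neq 1$.

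In fact no conjugate of $x_0$ lies in $K$ at all: $x_0$ has no fixed point in $(0,1)$, while every element of $K$ fixes $[a,b]$ pointwise. So ``the class $[x_0]$ is reached by a conjugate of $x_0$ squeezed into $[0,a]$'' is false. So is the plan of conjugating the generating set, in particular $x_0$, into these copies by an element of $F_n$.

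The claim of Step 2 is nevertheless true, and the repair is short.
\begin{itemize}
\item Let $\Lambda=\langle [x_1],\dots,[x_{n-1}]\rangle$. By your correct observation about $x_m,\dots,x_{m+n-2}$ for $m\gg 0$, $\Lambda$ lies in the image of $K$.
\item The map $\chi_0(h)=\log_n h'(0^+)$ is a homomorphism from $F_n$ onto $\mathbb{Z}$. It kills these $x_m$, since they are the identity near $0$, and hence kills $\Lambda$.
\item Since $\mathbb{Z}^n/\Lambda\cong\mathbb{Z}$, $\chi_0$ induces an isomorphism $\mathbb{Z}^n/\Lambda\to\mathbb{Z}$.
\item Thus any $g_0$ supported in $[0,a]$ with $g_0'(0^+)=x_0'(0^+)$ satisfies $[g_0]-[x_0]\in\Lambda$. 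An example is the rescaled copy of $x_0$ on $[0,n^{-k}]$ with $n^{-k}<a$. Hence $K$ surjects onto $F_n/[F_n,F_n]$.
\end{itemize}
Your fallback via $x_0x_1^{-1}$ also works, but only after checking that it is the identity near $1$ and conjugating its support into $[0,a]$.

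There is also a slip in the choice of $a,b$. Since $fg$ applies $f$ first, $g$ must be trivial on $[v]$. So choose $a<.v$ and $b>.v+n^{-|v|}$, not bounds around $[u]$.

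With these fixes your proof is correct. The paper avoids the abelianization entirely. For disjoint $[u],[v]$ it inserts $w_1,w_2,w_3$ of the same $t$-value between them. It then uses Remark \ref{rPreGeneralPairOfBranches} to get $f$ with $u\to w_1$, $w_3\to w_2$ and $g$ with $w_1\to w_2$, $v\to w_3$, so the single commutator $[f,g]$ has $u\to v$. The general case goes through an auxiliary word $w$.

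That argument is self-contained and exhibits the element as a product of at most two commutators. Yours needs the generators of $F_n/[F_n,F_n]$ and the support properties of the $x_m$. In exchange it proves more: every element of $F_n$ agrees, on any given compact subinterval of $(0,1)$, with an element of $[F_n,F_n]$.
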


\begin{proof}
    First assume that $[u]$ and $[v]$ are disjoint, and without loss of generality assume that $.u<.v$. Then there exist $3$ $n$-ary words $w_1,w_2,w_3$ such that $t(u)=t(w_1)=t(w_2)=t(w_3)=t(v)$, the intervals $[u],[w_1],[w_2],[w_3],[v]$ are pairwise disjoint and $.u<.w_1<.w_2<.w_3<.v$. Then, by Remark \ref{rPreGeneralPairOfBranches}, there exist elements $f,g\in F_n$ such that $f$ has the pairs of branches $u\rightarrow w_1$ and $w_3\rightarrow w_2$, and $g$ has the pairs of branches $w_1\rightarrow w_2$ and $v\rightarrow w_3$. Then their commutator $[f,g]=fgf^{-1}g^{-1}$ has the pair of branches $u\rightarrow v$.

    In the general case, take an inner $n$-ary word $w$ such that $t(u)=t(w)=t(v)$, $[u]$ and $[w]$ are disjoint and $[v]$ and $[w]$ are disjoint. Then by the first case we get two elements $h_1,h_2\in [F_n,F_n]$ such that $h_1$ has the pair of branches $u\rightarrow w$ and $h_2$ has the pair of branches $w\rightarrow v$. So their product $h_1h_2$ has the pair of branches $u\rightarrow v$.
\end{proof}

\begin{Lemma}
    $\widetilde{F}_n = \Cl ([F_n,F_n])$.
\end{Lemma}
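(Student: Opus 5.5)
We prove the two inclusions separately, using the characterization of the closure in Lemma \ref{l_pre_piecewiseH}: $\Cl([F_n,F_n])$ is exactly the set of $n$-adic piecewise-$[F_n,F_n]$ functions in $F_n$.

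For $\Cl([F_n,F_n])\subseteq\widetilde{F}_n$, let $f\in\Cl([F_n,F_n])$. Choose a tree-diagram $(T_1,T_2)$ of $f$ in which every pair of branches $u\to v$ is a pair of branches of some $h\in[F_n,F_n]$. Take the leftmost branch $0^a\to 0^b$. On $[0^a]$ the function $f$ agrees with $h$. Since $[F_n,F_n]\le\widetilde F_n$, we have $h'(0^+)=1$, and therefore $f'(0^+)=1$. The same argument applied to the rightmost branch $(n-1)^c\to(n-1)^d$ gives $f'(1^-)=1$.

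For $\widetilde{F}_n\subseteq\Cl([F_n,F_n])$, let $f\in\widetilde F_n$ and let $(T_1,T_2)$ be a tree-diagram of $f$ with branches $u_1\to v_1,\dots,u_m\to v_m$. We show that each pair of branches is a pair of branches of some element of $[F_n,F_n]$; Lemma \ref{l_pre_piecewiseH} then gives $f\in\Cl([F_n,F_n])$. We treat three kinds of branches.

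\textbf{Inner branches.} For $2\le j\le m-1$, both $u_j$ and $v_j$ are inner words. We need $t(u_j)=t(v_j)$, and then Lemma \ref{l_pre_commutator_pair_of_branches} supplies the required element. The condition $t(u_j)=t(v_j)$ holds because $f$ preserves $t$ on $n$-adic points: by Lemma \ref{lPairOfBranches} every pair of inner branches of an element of $F_n$ satisfies $t(u)=t(v)$. Equivalently, the invariance of the orbits $D_{n,i}$ applied to the endpoint $.u_j\mapsto .v_j$ gives the same conclusion.

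\textbf{End branches.} The leftmost pair is $0^a\to0^b$, and $f'(0^+)=1$ forces $a=b$. Likewise the rightmost pair is $(n-1)^c\to(n-1)^c$. These are not inner words, so Lemma \ref{l_pre_commutator_pair_of_branches} does not apply. Instead we need some $h\in[F_n,F_n]$ that is the identity on $[0^a]$. If $m=1$, then $f$ is the identity and there is nothing to prove. Otherwise the trivial element $1\in[F_n,F_n]$ has the pair of branches $0^a\to0^a$ (and $(n-1)^c\to(n-1)^c$), since the reduced diagram can be expanded to any tree. So the end branches are handled by the identity, with no extra work.

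The main point is the $t$-invariance used for the inner branches, which we take from Lemma \ref{lPairOfBranches}. Apart from that, the argument is bookkeeping.
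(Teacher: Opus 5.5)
Your proof is correct and follows essentially the paper's route: the inclusion $\Cl([F_n,F_n])\subseteq\widetilde F_n$ is the same argument, and for $\widetilde F_n\subseteq\Cl([F_n,F_n])$ both proofs rest on Lemma \ref{l_pre_commutator_pair_of_branches} for the inner pairs of branches. The difference is cosmetic: the paper phrases the second inclusion as acceptance by the core $\mathcal{L}([F_n,F_n])$, whereas you use the piecewise description of Lemma \ref{l_pre_piecewiseH} and handle explicitly, via the identity element, the end pairs $0^a\to 0^a$ and $(n-1)^c\to(n-1)^c$, which the paper leaves implicit.
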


\begin{proof}
    By Lemma \ref{l_pre_commutator_pair_of_branches}, every inner $n$-ary word $w$ labels a path on the core $\mathcal{L}([F_n,F_n])$, and $w^+$ depends only on $t(w)$. Hence $\widetilde{F}_n$ is accepted by $\mathcal{L}([F_n,F_n])$, so $\widetilde{F}_n\subseteq\Cl([F_n,F_n])$.

    In the other direction, let $f\in\Cl([F_n,F_n])$. Then by Lemma \ref{l_pre_piecewiseH} there exist $f_1,\dots,f_k\in [F_n,F_n]$ such that $f$ is an $n$-adic piecewise gluing of $f_1,\dots,f_k$. In particular, $f$ agrees with $f_1$ on some neighborhood of $0$ and $f$ agrees with $f_k$ on some neighborhood of $1$. Note that every element of the derived subgroup fixes some neighborhood of $0$ and of $1$, so $f'(0^+)=f_1'(0^+)=1$ and $f'(1^-)=f_k'(1^-)=1$, therefore $f\in\widetilde{F}_n$. 
\end{proof}

As a corollary of Lemma \ref{l_pre_commutator_pair_of_branches}, we obtain the following well-known result.

\begin{Remark}\label{r_pre_commutator_orbit}
    Let $i\in\{0,\dots,n-2\}$ and let $\alpha\in D_{n,i}$. Then the orbit of $\alpha$ under the action of $[F_n,F_n]$ is $D_{n,i}$.
\end{Remark}

%% file: Thegenerationproblem.tex
\section{On the generation problem}\label{The generation problem}
\label{section:gen_problem}
\subsection{Sufficient Conditions for Generation}
In this section, we follow \cite[section 7]{G1} closely, adapting most of the results for $F_n$.

Let $X$ be a finite subset of $F_n$. We are interested in determining whether $X$ generates $F_n$. Let $H:=\langle X\rangle$ be the subgroup of $F_n$ generated by $X$. We make the following observation: $H=F_n$ if and only if both $H[F_n,F_n]=F_n$ and $[F_n,F_n]\subseteq H$. Verifying the first condition is the same as checking if the image of $X$ under the abelianization map generates $\mathbb{Z}^n$ - and this is decidable. Thus, for the rest of this section, we focus on determining whether $[F_n,F_n]\subseteq H$. We start with a condition to check whether $[F_n,F_n]\subseteq\Cl(H)$.

\begin{Lemma}\label{l_gen_closure_commutator}
    Let $H$ be a subgroup of $F_n$. Then $[F_n,F_n]\subseteq \Cl(H)$ if and only if the following conditions hold:
    \begin{itemize}
        \item[(1)] Every $n$-ary word labels a path on the core $\mathcal{L}(H)$.
        \item[(2)] For any pair of inner $n$-ary words $u,v$, we have $u^+=v^+$ in $\mathcal{L}(H)$ if and only if $t(u)=t(v)$.
    \end{itemize}
\end{Lemma}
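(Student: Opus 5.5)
We prove the two directions separately. The main tools are Lemma \ref{l_pre_pair_of_branches_H}, which characterizes identifications $u^+=v^+$ in $\mathcal{L}(H)$, Lemma \ref{l_pre_commutator_pair_of_branches}, and the fact that orbits of $H$ and of $\Cl(H)$ coincide (Remark \ref{r_pre_closure_orbit}).

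\emph{($\Rightarrow$).} Assume $[F_n,F_n]\subseteq \Cl(H)$. Then $\mathcal{L}([F_n,F_n])\subseteq$-accepts: every element of $[F_n,F_n]$ is accepted by $\mathcal{L}(H)$.

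For (1), let $w$ be any $n$-ary word. We choose an inner word $u$ with $t(u)=t(w)$ and $[u]$ disjoint from $[w]$; for instance, we append enough digits. If $w$ is inner, Lemma \ref{l_pre_commutator_pair_of_branches} gives an element of $[F_n,F_n]$ with a tree-diagram containing the pair of branches $w\to u$. Since this element is accepted by $\mathcal{L}(H)$, some tree-diagram of it is readable, so $w$ is a prefix of a readable branch, hence readable. For this we refine via Lemma \ref{l_core_automata_properties}(2): accepted diagrams stay accepted after passing to the reduced diagram. The cleanest route is to note that readability of all words of the form $w w'$ for long $w'$ implies readability of $w$. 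If $w=0^k$ or $(n-1)^k$, then $w$ is a prefix of the inner word $w1$ (or $w0$), so readability follows from the inner case.

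For the forward implication in (2), let $u,v$ be inner with $t(u)=t(v)$. For every $w$, the words $uw,vw$ are inner with equal $t$, so Lemma \ref{l_pre_commutator_pair_of_branches} gives an element of $[F_n,F_n]\subseteq\Cl(H)$ with the pair of branches $uw\to vw$. Applying Lemma \ref{l_pre_pair_of_branches_H} to the subgroup $\Cl(H)$, whose core equals $\mathcal{L}(H)$ since the closure is closed and has the same core, gives $u^+=v^+$.

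For the converse in (2), suppose $u^+=v^+$. By Lemma \ref{l_pre_pair_of_branches_H} some $h\in H$ has the pair of branches $uw\to vw$, so $h(.uw)=.vw$. Since $t$ is an $F_n$-orbit invariant on $D_n$, this gives $t(uw)=t(vw)$, hence $t(u)=t(v)$.

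\emph{($\Leftarrow$).} Assume (1) and (2). Let $g\in[F_n,F_n]$, and take a reduced diagram $(T_1,T_2)$ for $g$. Since $g$ fixes neighborhoods of $0$ and $1$, after adding common carets we may assume the leftmost pair of branches is $0^k\to0^k$ and the rightmost is $(n-1)^k\to(n-1)^k$. All other leaves are inner words. By (1), both trees are readable. The two extreme pairs trivially have equal endpoints. Each middle pair $u\to v$ satisfies $t(u)=t(v)$ because $g$ maps $.u$ to $.v$ and $t$ is an orbit invariant. So by (2), $u^+=v^+$. Hence $g$ is accepted by $\mathcal{L}(H)$, and $g\in\Cl(H)$.

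The main obstacle is the readability step in ($\Rightarrow$). Acceptance of a commutator only guarantees that some diagram is readable, and we must deduce that the specific word $w$ is readable. The plan is to use Lemma \ref{l_core_automata_properties}(1): the reduced diagram is accepted, so any diagram obtained from it whose trees are readable is also accepted. If the branch $w\to u$ lies below leaves of the reduced diagram, then $w$ extends a readable leaf. The worry is that $w$ might sit strictly above a leaf. In that case, however, $w$ is a prefix of a readable leaf and is therefore readable.
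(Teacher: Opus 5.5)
Your ($\Leftarrow$) direction and your proof that $u^+=v^+$ implies $t(u)=t(v)$ are correct and essentially match the paper. The gap is the readability claim (1) in ($\Rightarrow$). It affects the rest of ($\Rightarrow$) as well: your proof that $t(u)=t(v)$ implies $u^+=v^+$ invokes Lemma \ref{l_pre_pair_of_branches_H}, whose hypothesis is that $u$ and $v$ already label paths in $\mathcal{L}(H)$.

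Membership $g\in\Cl(H)$ only guarantees that the \emph{reduced} diagram $(R_1,R_2)$ of $g$ is accepted. Lemma \ref{l_pre_commutator_pair_of_branches} gives you some diagram $(T_1,T_2)$ with the pair $w\to u$. Since $T_1$ is obtained from $R_1$ by adding carets, the leaf $w$ of $T_1$ lies at or below a leaf $p$ of $R_1$. It can never lie strictly above a leaf of $R_1$, so the case you worry about cannot occur. The case you dismiss is exactly the problematic one. Acceptance gives readability of $p$ only, and a word extending a readable word need not be readable, because $p^+$ may be a leaf of $\mathcal{L}(H)$. Likewise, the readability of all $ww'$ for long $w'$ is never actually established.

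To close the gap you need a commutator whose reduced diagram genuinely reaches $w$. The paper does this by taking the diagram with the pair $u\to v$ (and diagrams with $0^m\to 0^m$, $(n-1)^m\to(n-1)^m$) to be reduced.

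A short alternative is as follows. Suppose $w$ were not readable, and let $w'$ be its longest readable prefix, so $(w')^+$ is a leaf of $\mathcal{L}(H)$. Take a nontrivial commutator $g$ of two elements of $F_n$ supported in an $n$-adic interval contained in the interior of $[w']$. Then $g$ is not linear on $[w']$: it is the identity near both ends of $[w']$ but is not the identity. Hence $w'$ is not at or below a leaf of the reduced domain tree $R_1$ of $g$, i.e.\ it is an internal vertex of $R_1$. Readability of $R_1$ then makes some $w'i$ readable, so $(w')^+$ is a father, a contradiction.

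Once (1) is available, your argument for (2) goes through. In fact, the same at-or-below-a-leaf observation gives $u^+=v^+$ directly from any diagram of a commutator with the pair $u\to v$, so you would not even need $\mathcal{L}(\Cl(H))=\mathcal{L}(H)$.
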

\begin{proof}
   First, assume that conditions (1) and (2) hold. Let $f\in [F_{n},F_{n}]$, and let $(T_1,T_2)$ be a tree-diagram representing $f$. Denote by $u_1\rightarrow v_1,\dots,u_k\rightarrow v_k$ the pairs of branches of $(T_1,T_2)$. By condition (1), all the words $u_1,\dots,u_k$ and $v_1,\dots,v_k$ label paths on $\mathcal{L}(H)$. Since $f\in [F_n,F_n]$, we have $f'(0^+)=f'(1^-)=1$, which implies $u_1=v_1$ and $u_k=v_k$. Note that for all $1<i<k$, the labels $u_i$ and $v_i$ are inner $n$-ary words, and $t(u_i)=t(v_i)$ (by Lemma \ref{lPairOfBranches}). By condition (2), $u_i^+=v_i^+$ in $\mathcal{L}(H)$. Therefore, $(T_1,T_2)$ is accepted by $\mathcal{L}(H)$, meaning $f\in\Cl(H)$.
   
   Conversely, assume $[F_n,F_n]\subseteq\Cl(H)$. Let $u,v$ be inner $n$-ary words. Note that if $t(u)\neq t(v)$, then by Lemma \ref{lPairOfBranches} and Lemma \ref{l_pre_pair_of_branches_H} we get that $u^+\neq v^+$. Now assume that $t(u)=t(v)$. By Lemma \ref{l_pre_commutator_pair_of_branches}, there exists a reduced tree-diagram $(T_1,T_2)$ representing an element of $[F_n,F_n]$ with the pair of branches $u\rightarrow v$. Since $[F_n,F_n]\subseteq\Cl(H)$, this tree-diagram is accepted by $\mathcal{L}(H)$, so $u$ and $v$ label paths on $\mathcal{L}(H)$ and $u^+=v^+$. Additionally, for any $m\in\N$, one can construct a reduced tree-diagram of an element in $[F_n,F_n]$ with the pair of branches $0^m\rightarrow 0^m$ (respectively, $(n-1)^m\rightarrow(n-1)^m$). This implies that $0^m$ and $(n-1)^m$ label paths on $\mathcal{L}(H)$. Consequently, every $n$-ary word labels a path on $\mathcal{L}(H)$.
\end{proof}

Conditions (1) and (2) of Lemma \ref{l_gen_closure_commutator} can be reformulated in a way that is easier to verify algorithmically. We call a vertex $x$ in an $n$-ary tree-automaton an \emph{inner vertex} if there is a rooted path $p$ such that $lab(p)$ is an inner $n$-ary word and $p^+=x$. We then obtain the following corollary:

\begin{Corollary}\label{c_gen_closure_commutator}
        Let $H$ be a subgroup of $F_n$. Then $[F_n,F_n]\subseteq \Cl(H)$ if and only if the following conditions hold:
        \begin{itemize}
        \item[(1)] Every vertex in $\mathcal{L}(H)$ is a father vertex (i.e., has $n$ outgoing edges).
        \item[(2)] There are exactly $n-1$ inner vertices in $\mathcal{L}(H)$.
    \end{itemize}
\end{Corollary}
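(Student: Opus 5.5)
We will show that the conditions of Corollary \ref{c_gen_closure_commutator} are equivalent to conditions (1) and (2) of Lemma \ref{l_gen_closure_commutator}; the corollary then follows from that lemma. Throughout we use two facts about an $n$-ary tree-automaton $\mathcal{L}(H)$. First, every vertex is reachable from the root. Second, distinct fathers differ in some child, since no folding of type $2$ applies.

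\emph{Condition (1) of the corollary is equivalent to condition (1) of the lemma.} If every vertex is a father, then induction on word length shows every $n$-ary word is readable. Conversely, suppose every word is readable. Any vertex $x$ equals $w^+$ for some word $w$, because $x$ is reachable. Since $w0$ is readable, $x$ has outgoing edges, so $x$ is a father.

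\emph{Forward direction for condition (2).} Assume the two conditions of Lemma \ref{l_gen_closure_commutator}. For each $i\in\{0,\dots,n-2\}$ pick an inner word $w_i$ with $t(w_i)=i$, for example $1^{i}0\ldots$ or $10^{k}$ adjusted to the right digit sum; such words exist by Remark \ref{r_pre_Dni_dense}. By condition (2) of the lemma, every inner vertex equals some $w_i^+$, and the $w_i^+$ are pairwise distinct. Hence there are exactly $n-1$ inner vertices.

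\emph{Converse for condition (2), the main step.} Assume every vertex is a father and there are exactly $n-1$ inner vertices. Readability then follows from the first part. Define $a_i$ to be the set of vertices $w^+$ with $w$ inner and $t(w)=i$.

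We first need $w^+\neq v^+$ whenever $w,v$ are inner with $t(w)\ne t(v)$. This is exactly the easy direction in the proof of Lemma \ref{l_gen_closure_commutator}: it uses only Lemma \ref{lPairOfBranches} together with Lemma \ref{l_pre_pair_of_branches_H}. Indeed, if $w^+=v^+$, then $H$ contains an element with a pair of branches $ws\to vs$ for a long inner suffix $s$, which forces $t(ws)=t(vs)$, hence $t(w)=t(v)$. So the sets $a_0,\dots,a_{n-2}$ are pairwise disjoint and each is nonempty. Since there are only $n-1$ inner vertices in total, each $a_i$ is a single vertex.

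It follows that the inner vertex reached depends only on $t$, so $t(u)=t(v)$ implies $u^+=v^+$. Combined with the disjointness above, this is exactly condition (2) of the lemma, and Lemma \ref{l_gen_closure_commutator} finishes the proof.

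The delicate point is the disjointness of the $a_i$. It requires the ``only if'' part of Lemma \ref{l_pre_pair_of_branches_H} applied to suitable extensions. The technical subtlety is making the extensions inner and of the appropriate length, which is handled by appending a fixed inner suffix to the words $uw,vw$.
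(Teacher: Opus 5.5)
Your proof is correct and follows essentially the route the paper intends (the corollary is stated without proof as a reformulation of Lemma \ref{l_gen_closure_commutator}): you match the two pairs of conditions, and the key step is the pigeonhole argument on the pairwise disjoint, nonempty classes $a_0,\dots,a_{n-2}$, whose disjointness comes from Lemmas \ref{lPairOfBranches} and \ref{l_pre_pair_of_branches_H}. The ``technical subtlety'' you flag at the end does not actually arise, because every extension of an inner word is again inner.
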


Note that by Corollary \ref{c_gen_closure_commutator}, the problem of determining whether the closure of the subgroup generated by a finite subset $X \subseteq F_n$ contains $[F_n,F_n]$ is decidable.

\begin{Remark}\label{r_gen_action_transitive}
    Let $H$ be a subgroup of $F_n$ and assume $[F_n,F_n]\subseteq\Cl(H)$. Then the following hold:
    \begin{itemize}
        \item[(1)] The action of $H$ on $D_{n,i}$ is transitive for any $i\in\{0,\dots,n-2\}$.
        \item[(2)] For any $x,a,b\in(0,1)$ such that $a<b$, there exists an element $g\in H$ such that $g(x)\in(a,b)$.
    \end{itemize}
\end{Remark}
\begin{proof}
    The assertions follow from Remarks \ref{r_pre_closure_orbit} and \ref{r_pre_commutator_orbit} 
    and the minimality of the action of $[F_n,F_n]$ on $(0,1)$. 
\end{proof}

\begin{Definition}
    Let $H \le F_n$. We say an interval $(a,b) \subseteq [0,1]$ is an \emph{orbital} of $H$ if $H$ fixes $a$ and $b$ (i.e., $h(a)=a$ and $h(b)=b$ for all $h \in H$), but $H$ does not fix any point $x \in (a,b)$ (i.e., for every such $x$, there exists $h \in H$ with $h(x) \neq x$). This notation is standard in the study of groups of functions acting on $\mathbb{R}$.
    
    Furthermore, for any subset $I \subseteq [0,1]$, we denote by $H_I$ the subgroup of $H$ consisting of all elements that fix every point in $I$.
\end{Definition}
\begin{Lemma}\label{L_gen_orbitalu}
    Let $H$ be a subgroup of $F_n$ such that the following hold:
    \begin{itemize}
        \item[$(a)$] $[F_n,F_n]\subseteq \Cl(H)$.
        \item[$(b)$] There exists some $\tilde{f}\in H$ such that $\tilde{f}'(0^+)\neq 1$ and $\tilde{f}'(1^-)=1$.
    \end{itemize}
    Let $u$ be a finite $n$-ary word such that $.u>0$. Then $(0,.u)$ is an orbital of $H_{[u]}$.
\end{Lemma}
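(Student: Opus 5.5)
We must show that every element of $H_{[u]}$ fixes $0$ and $.u$, and that no point $x\in(0,.u)$ is fixed by all of $H_{[u]}$. The first part is immediate: every element of $F_n$ fixes $0$, and elements of $H_{[u]}$ fix $[u]$ pointwise, in particular its left endpoint $.u$. All the work is in the second part.

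For the second part, fix $x\in(0,.u)$. We will produce $h\in H_{[u]}$ with $h(x)\neq x$ by combining the element $\tilde f$ with elements of $H$ that move things into a small neighborhood of $0$.

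First we arrange a contracting element. Replacing $\tilde f$ by $\tilde f^{-1}$ if necessary, we may assume $\tilde f'(0^+)<1$. Then there is $\epsilon>0$ such that $\tilde f(y)<y$ for all $y\in(0,\epsilon]$. Moreover, since $\tilde f'(1^-)=1$, the element $\tilde f$ is the identity on some interval $[1-\delta,1]$. Note that $\tilde f$ need not fix $[u]$, so it cannot be used directly.

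Next we conjugate so that $[u]$ lands inside the region fixed by $\tilde f$. By Remark \ref{r_gen_action_transitive}(2), applied with some care, there is $g\in H$ with $g([u])\subseteq(1-\delta,1)$; to get the whole interval inside, apply that remark to the point $.u$ and use the freedom available in $H$. Then $\tilde f^{\,g^{-1}}=g\tilde f g^{-1}$, which under the paper's composition convention means first $g$, then $\tilde f$, then $g^{-1}$, fixes $[u]$ pointwise. Its slope at $0^+$ equals $\tilde f'(0^+)\neq1$, so it contracts a neighborhood $(0,\epsilon')$ of $0$. Thus we obtain $k\in H_{[u]}$ with $k(y)<y$ for all $y\in(0,\epsilon']$.

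If $x\le\epsilon'$, then $k(x)\neq x$ and we are done. Otherwise we push $x$ into $(0,\epsilon')$ using an element of $H_{[u]}$. This is the main obstacle: we need an element that fixes $[u]$ and still moves $x$, which is essentially the orbital statement itself.

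The first approach to try uses the closure instead of $H$. Lemma \ref{l_pre_piecewiseH} together with $[F_n,F_n]\subseteq\Cl(H)$ provides elements of $\Cl(H)$ that fix $[u]$ and move $x$ into $(0,\epsilon')$. This is not enough by itself, since the element must lie in $H$.

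The better approach argues by contradiction. Suppose every element of $H_{[u]}$ fixes some point $x\in(0,.u)$, and take the infimum $x_0$ of such common fixed points, with $x_0\ge\epsilon'>0$. By the previous step, every point of $(0,\epsilon']$ is moved by $k$. Now choose $g\in H$ that moves $x_0$ close to $0$ while keeping $[u]$ to its right; Remark \ref{r_gen_action_transitive}(2) gives an element moving $x_0$ into $(0,\epsilon'/2)$. Conjugating $k$ by such an element should produce an element of $H_{[u]}$ that moves $x_0$, which is a contradiction.

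The delicate point is to keep $[u]$ pointwise fixed after conjugation. The plan is to handle this by first conjugating $\tilde f$ so that its support, apart from a small piece near $0$, avoids both $[u]$ and the $g$-image of $[u]$. This reduces the problem to finding $g\in H$ with $g(x_0)<\epsilon'$ and $g([u])\subseteq(1-\delta,1)$ simultaneously. We expect to obtain this from the transitivity in Remark \ref{r_gen_action_transitive}(1) applied to $n$-adic endpoints, together with density.
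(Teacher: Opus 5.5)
\textbf{There is a genuine gap at the central step.} Your first half is fine. So is the construction of $k\in H_{[u]}$ contracting a neighborhood of $0$: moving only $.u$ into $(1-\delta,1)$ suffices, since $g$ is increasing and fixes $1$.

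The gap is the step you flag and then defer. Conjugating $k$ by an element that sends $x_0$ into $(0,\epsilon')$ gives an element fixing $g^{-1}(\mathrm{Fix}(k))$, and this set need not contain $[u]$. So you are reduced to finding one $g\in H$ such that, simultaneously, $g(x_0)$ lies in the contracting neighborhood of $\tilde f$ and $g([u])\subseteq[1-\delta,1]$.

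The tools you propose do not give this:
\begin{itemize}
\item Remark~\ref{r_gen_action_transitive} (transitivity on each $D_{n,i}$, density of orbits) controls the image of a single point only.
\item The hypothesis $[F_n,F_n]\subseteq\Cl(H)$ provides such two-point maps only inside $\Cl(H)$. These are piecewise-$H$ maps, where the two points may be moved by different elements of $H$.
\end{itemize}
This simultaneous control is not a routine consequence of one-point statements; it is equivalent to the lemma itself. Given the lemma, move $.u$ near $1$ by one element. Then push the image of $x_0$ down inside $H_{[u']}$, for a small $[u']$ placed between the two images, using Lemma~\ref{L_gen_orbitalinc}. So your reduction restates the problem rather than solving it. Note also that the infimum $x_0$ plays no role in the reduction, which would apply equally to any $x$.

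\textbf{The missing idea} is how the paper avoids two-point control altogether. It works with subgroups $H_{[c,1]}$ that fix a terminal interval. These are nested ($c\le c'$ implies $H_{[c,1]}\subseteq H_{[c',1]}$) and satisfy $h^{-1}H_{[c,1]}h=H_{[h(c),1]}$. Take $b$ such that $\tilde f$ fixes $[b,1]$, and let $a$ be the infimum of the common fixed points of $H_{[b,1]}$. Then $0<a\le b$ and $(0,a)$ is an orbital, and the paper proves $a=b$:
\begin{itemize}
\item If $a<b$, density of the orbit of $a$ plus one conjugation argument gives $h\in H$ with $h(a)>a$ and $h(a)$ not fixed by $H_{[b,1]}$.
\item If $h(b)<b$, the conjugate $H_{[h(b),1]}$ is a subgroup of $H_{[b,1]}$ containing an element that moves $a$. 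If $h(b)\ge b$, the same holds for $H_{[h^{-1}(b),1]}$. Either way this contradicts the fact that $H_{[b,1]}$ fixes $a$.
\end{itemize}
Only then is the result transported to $[u]$, and that step needs just one point. Choose $h_2\in H$ with $x<h_2(b)<.u$; then $H_{[h_2(b),1]}\subseteq H_{[u]}$ has orbital $(0,h_2(b))\ni x$.

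Your infimum is taken for $H_{[u]}$, whose fixed set is not a terminal interval. So this nesting-under-conjugation trick is not available in your setup as written.
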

\begin{proof}
    Since $\tilde{f}'(1^-)=1$ and $\tilde{f}'(0^+)\neq 1$, there exists a minimal $b\in (0,1)$ such that $\tilde{f}$ fixes $[b,1]$, and some $a'\in(0,b)$ such that $\tilde{f}$ does not fix any point in $(0,a')$.
    
    We will first show that $(0,b)$ is an orbital of the group $H_{[b,1]}$. Let $a:=\inf\{c>0 \mid H_{[b,1]} \text{ fixes } c\}$. Since $\tilde{f}$ does not fix any point in $(0,a')$, we have $0<a'\le a\le b$. Note also that $H_{[b,1]}$ fixes $a$ (by continuity). By the definition of $a$, $H_{[b,1]}$ does not fix any $c\in(0,a)$, so $(0,a)$ is an orbital of $H_{[b,1]}$. Assume for contradiction that $a\neq b$. Then $a<b$. Let $\operatorname{Fix}(H_{[b,1]}):=\{x\in [a,1] \mid H_{[b,1]} \text{ fixes } x\}$.
    
    We will now show that there exists some $h\in H$ such that $h(a)>a$ and $h(a)\notin \operatorname{Fix}(H_{[b,1]})$. Assume for contradiction that for all $h\in H$, if $h(a)>a$ then $h(a)\in \operatorname{Fix}(H_{[b,1]})$. Then $orb_H(a)\cap(a,1)\subseteq \operatorname{Fix}(H_{[b,1]})$, where $orb_H(a)$ is the orbit of $a$ under the action of $H$. By Remark \ref{r_gen_action_transitive}, $orb_H(a)$ is dense in $[0,1]$, so $\operatorname{Fix}(H_{[b,1]})$ is dense in $[a,1]$. Therefore, each element in $H_{[b,1]}$ fixes a dense subset of $[a,1]$, and by continuity, it must fix $[a,1]$ point-wise. Note that since $a<b$, $H_{[a,1]}\subseteq H_{[b,1]}$, and by the above, $H_{[a,1]}=H_{[b,1]}$. Since $a<b$, by Remark \ref{r_gen_action_transitive}, there exists $g\in H$ such that $g(a)\in (a,b)$. Consequently, $H_{[a,1]}\subseteq H_{[g(a),1]}\subseteq H_{[b,1]}=H_{[a,1]}$, which implies $H_{[g(a),1]}=H_{[a,1]}$. Note that since $g(a)>a$, it follows that $g^{-1}(a)\in (0,a)$. Recall that $(0,a)$ is an orbital of $H_{[a,1]}=H_{[b,1]}$, so $H_{[a,1]}$ does not fix $g^{-1}(a)$. Therefore, $g^{-1}H_{[a,1]}g=H_{[g(a),1]}$ does not fix $a$, which is a contradiction since $H_{[g(a),1]}=H_{[a,1]}$.
    
    Thus, there exists some $h\in H$ such that $h(a)>a$ and $H_{[b,1]}$ does not fix $h(a)$. Note that $h^{-1}(a)\in(0,a)$, so $H_{[b,1]}$ does not fix $h^{-1}(a)$. We consider the following cases:
    \begin{itemize}
        \item[$(1)$] $h(b)<b$. Then $H_{[h(b),1]}\subseteq H_{[b,1]}$. However, since $H_{[b,1]}$ does not fix $h^{-1}(a)$, $h^{-1}H_{[b,1]}h=H_{[h(b),1]}$ does not fix $a$, which is a contradiction.
        \item[$(2)$] $h(b)\ge b$. Then $h^{-1}(b)\le b$, and so $H_{[h^{-1}(b),1]}\subseteq H_{[b,1]}$. However, since $H_{[b,1]}$ does not fix $h(a)$, $hH_{[b,1]}h^{-1}= H_{[h^{-1}(b),1]}$ does not fix $a$, which is also a contradiction.
    \end{itemize}
    Since we reached a contradiction in both cases, we must have $a=b$, meaning $(0,b)$ is an orbital of $H_{[b,1]}$.
    
    We now return to the original claim, which is to show that $(0,.u)$ is an orbital of $H_{[u]}$. Note that $H_{[u]}$ fixes $.u$, so we only need to show that for all $x\in(0,.u)$ there exists some $h_1\in H_{[u]}$ such that $h_1(x)\neq x$.
    
    Let $x\in (0,.u)$. By Remark \ref{r_gen_action_transitive}, there exists some $h_2\in H$ such that $x<h_2(b)<.u$. Then $(0,h_2(b))$ is an orbital of $H_{[h_2(b),1]}=h_2^{-1}H_{[b,1]}h_2$, so there exists some $h_1 \in H_{[h_2(b),1]}\subseteq H_{[u]}$ such that $h_1(x)\neq x$. Since $h_2(b)<.u$, we have that $[u]\subseteq [h_2(b),1]$, so $h_1\in H_{[u]}$, which completes the proof.
\end{proof}

The following lemma is well known (for example, a proof appears in \cite{Bleak2008}).
\begin{Lemma}\label{L_gen_orbitalinc}
     Let $H$ be a subgroup of $F_n$. Let $(a,b)$ be an orbital of $H$. Then for any $x,y\in(a,b)$ there exists an element $h\in H$ such that $h(x)>y$.
\end{Lemma}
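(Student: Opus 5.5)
We must prove Lemma \ref{L_gen_orbitalinc}: if $(a,b)$ is an orbital of $H\le F_n$, then for any $x,y\in(a,b)$ some $h\in H$ satisfies $h(x)>y$.

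The approach is to consider the orbit of $x$ and its supremum. Fix $x\in(a,b)$ and set $s:=\sup\{h(x)\mid h\in H\}$. Since every $h\in H$ fixes $b$ and is an increasing homeomorphism, we have $h(x)<b$ for all $h$. Hence $x\le s\le b$. It suffices to prove $s=b$: then some $h(x)$ exceeds $y<b$.

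The key step is to show that $s$ is fixed by every element of $H$. The orbit $O:=\{h(x)\mid h\in H\}$ is $H$-invariant, and each $g\in H$ is an order-preserving homeomorphism of $[0,1]$. Therefore $g(s)=g(\sup O)=\sup g(O)=\sup O=s$.

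Now suppose for contradiction that $s<b$. Then $s\in[x,b)\subseteq(a,b)$, because $s\ge x>a$. So $s$ is a point of $(a,b)$ fixed by all of $H$, contradicting the assumption that $(a,b)$ is an orbital (no point of $(a,b)$ is fixed by $H$). Hence $s=b$.

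Finally, since $y<b=\sup O$, there is $h\in H$ with $h(x)>y$, as required. There is no real obstacle here. The only point needing care is that the supremum commutes with $g$, which holds because $g$ is a continuous increasing bijection of $[0,1]$.
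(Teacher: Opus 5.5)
Your proof is correct and complete. The key step, $g(\sup O)=\sup g(O)=\sup O$, is valid because $O$ is an $H$-orbit and each $g\in H$ is a continuous increasing bijection of $[0,1]$. The case $s<b$ then produces a global fixed point of $H$ inside $(a,b)$, which contradicts the orbital hypothesis.

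The paper gives no proof of its own here. It simply quotes the lemma as well known and refers to \cite{Bleak2008}. Your supremum-of-the-orbit argument therefore supplies a self-contained proof. It uses only that the elements of $H$ are orientation-preserving homeomorphisms fixing $a$ and $b$, so it holds for any group of such homeomorphisms, not just for subgroups of $F_n$.
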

\begin{Lemma}\label{L_gen_interval}
        Let $H$ be a subgroup of $F_n$ such that the following hold.
    \begin{itemize}
        \item[$(a)$] $[F_n,F_n]\subseteq \Cl(H)$.
        \item[$(b)$] There exists some $\tilde{f}\in H$ such that $\tilde{f}'(0^+)\neq 1$ and $\tilde{f}'(1^-)=1$.
    \end{itemize}
    Let $u'$ be a finite $n$-ary word and let $a,b\in(0,1)$, such that $a<b$, be $n$-adic. Then there exists $g\in H$ such that $g([a,b])\subseteq[u']$.
\end{Lemma}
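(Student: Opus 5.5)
We would prove the lemma in two moves: first shift $[a,b]$ so that it sits close to $0$, then push everything near $0$ into $[u']$ using the orbital structure from Lemma \ref{L_gen_orbitalu}. First we may assume that $u'$ is an inner word with $.u'>0$. Replacing $u'$ by a longer extension $u'w$ only shrinks $[u']$, so this costs nothing. We then fix an auxiliary word $u$ with $.u>0$ and $[u]\cap[u']=\emptyset$, lying to the right of $[u']$. For instance, take $u$ to be an inner word with $.u>.u'+n^{-|u'|}$; such a word exists because $[u']$ does not reach $1$ once $u'$ is inner. By Lemma \ref{L_gen_orbitalu}, $(0,.u)$ is an orbital of $H_{[u]}$.

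\textbf{Step 1: shift $[a,b]$ near $0$.} We want some $g_1\in H$ with $g_1(b)<.u'$. Then $g_1([a,b])\subseteq(0,.u')\subseteq(0,.u)$. To get such $g_1$, apply Remark \ref{r_gen_action_transitive}(2) with $x=b$ and the target interval $(0,.u')$. Since $g_1$ is an increasing homeomorphism fixing $0$, this gives $0<g_1(a)<g_1(b)<.u'$.

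\textbf{Step 2: push the shifted interval into $[u']$.} Write $a_1=g_1(a)$ and $b_1=g_1(b)$, so $[a_1,b_1]\subseteq(0,.u)$. The interval $[u']$ has nonempty interior inside the orbital $(0,.u)$. By Lemma \ref{L_gen_orbitalinc}, applied to the orbital $(0,.u)$ of $H_{[u]}$, we can move points of this orbital arbitrarily far to the right.
\begin{itemize}
\item[$(i)$] Apply the lemma with $x=a_1$ and $y=.u'$ to get $h\in H_{[u]}$ with $h(a_1)>.u'$.
\item[$(ii)$] We also need $h(b_1)\le .u'+n^{-|u'|}$, and this is the main obstacle. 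Moving one point past the left end of $[u']$ does not control the image of the right endpoint, so the interval could overshoot.
\end{itemize}

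\textbf{Handling the overshoot.} We would first try to use a fixed-point / slope-$n$ element to shrink the interval. Shrinking is available: consider elements of $H_{[u]}$ that move points toward $0$, i.e.\ inverses of elements pushing right. Using Lemma \ref{L_gen_orbitalinc} with $h^{-1}$, we can move $b_1$ to the left of any prescribed point while $a_1$ moves even further left. This does not by itself shrink the length, so instead we compare endpoints.
\begin{itemize}
\item[$(i)$] Let $c<d$ be the endpoints of $[u']$.
\item[$(ii)$] Pick $h\in H_{[u]}$ with $h(a_1)>c$. Let $S=\{k\in H_{[u]} : k(a_1)>c\}$. If some $k\in S$ also has $k(b_1)<d$, we are done.
\item[$(iii)$] Otherwise, every element of $H_{[u]}$ that carries $a_1$ past $c$ also carries $b_1$ past $d$. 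We argue by contradiction using density of orbits. By Remark \ref{r_gen_action_transitive} together with Remark \ref{r_pre_closure_orbit}, the $H$-orbit of $a_1$ is dense. In particular, $\Cl(H)\supseteq[F_n,F_n]$ contains elements supported inside $(0,.u)$ that map $[a_1,b_1]$ into $(c,d)$, by Lemma \ref{l_pre_commutator_pair_of_branches} applied to small $n$-adic intervals.
\end{itemize}

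\textbf{Realizing this in $H$.} A cleaner route is to conjugate. First choose $g_1$ in Step 1 so that $b_1$ is tiny, and also so that the whole picture is scaled: use $\tilde f$ (or its inverse), which has slope $\neq1$ at $0$. Then $\tilde f^{\pm m}$ acts near $0$ as multiplication by $n^{\pm km}$ for some $k\neq0$. Choose $m$ so that the ratio structure is compatible: first push a fixed small interval $J$ with $J\subseteq[u']$ near $0$ via some $g_2\in H$. Then compress $[a_1,b_1]$ by a large power of $\tilde f$, which contracts toward $0$, until it lies inside $g_2(J)$ near $0$. We need a point of $g_2(J)$ close to $0$, which the orbital lemma gives. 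Finally apply $g_2^{-1}$. Making this precise, we expect that showing $\tilde f^{m}$ contracts uniformly on a neighborhood containing both $[a_1,b_1]$ and a fixed point of $g_2(J)$ is the step requiring care. The element $\tilde f$ or $\tilde f^{-1}$ is conjugate to a contraction near $0$, but only on $(0,\epsilon)$, where $\epsilon$ is its first fixed point; we would therefore choose $g_1$ and $g_2$ to land everything inside $(0,\epsilon)$.
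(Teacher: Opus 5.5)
There is a genuine gap: the overshoot problem you correctly isolate in Step 2 is never resolved. Your contradiction argument in (iii) only produces elements of $\Cl(H)$ (piecewise-$H$ gluings of commutators), and nothing transfers them into $H$. No contradiction with the assumption on $S$ is actually derived.

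The contraction route fails for a structural reason. On the linear germ at $0$, with $\lambda=\tilde f'(0^+)<1$ after inverting if necessary, $\tilde f^{m}$ acts as $x\mapsto\lambda^m x$, so it preserves the ratio $b_1/a_1$. Hence $[\lambda^m a_1,\lambda^m b_1]\subseteq g_2(J)=[p,q]$ requires $\lambda^m\in[p/a_1,\,q/b_1]$. This is possible only if $q/p\ge b_1/a_1$, with extra room for the discreteness of $\lambda^m$. Nothing in your construction controls $q/p$ against $b_1/a_1$; your phrase ``choose $m$ so that the ratio structure is compatible'' hides exactly this. Arranging it is again the problem of moving one endpoint while controlling the other, which is the difficulty you started with. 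So the delicate point is not uniform contraction on $(0,\epsilon)$.

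The missing idea is to choose the auxiliary word $u$ so that the left endpoint $.u$ of $[u]$ lies \emph{inside} $[u']$, rather than to the right of $[u']$. Every element of $H_{[u]}$ fixes $.u$, so it cannot carry a point of $(0,.u)$ past $.u$. The right endpoint is then trapped automatically.

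Concretely, let $w\equiv u'1$, so that $.u'<.w<.u'+n^{-|u'|}$. By Lemma \ref{L_gen_orbitalu}, $(0,.w)$ is an orbital of $H_{[w]}$. Your Step 1 already gives $g_1\in H$ with $g_1(b)<.u'<.w$. Lemma \ref{L_gen_orbitalinc} gives $h\in H_{[w]}$ with $h(g_1(a))>.u'$. Then $h(g_1(b))<h(.w)=.w$, so $g_1h$ (in the paper's left-to-right composition) maps $[a,b]$ into $(.u',.w)\subseteq[u']$.

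The paper does essentially this with $v\equiv u'i0(n-1)$ and $u\equiv u'i1$, so that $[v]=[.v,.u]\subseteq[u']$. It first moves $b$ exactly to $.v$ using transitivity of $H$ on $D_{n,t(b)}$. It then pushes the image of $a$ past $.v$ with an element $f\in H_{[u]}$, and $f(.v)<f(.u)=.u$ bounds the image of $b$.
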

\begin{proof}
    Denote $i:=t(b)-t(u')\Mod{n-1}$ and $v\equiv u'i0(n-1)$. Then $v$ is an inner $n$-ary word and $t(v)=t(b)$. By Remark \ref{r_gen_action_transitive}, there exists $h\in H$ such that $h(b)=.v$. So $h(a)<h(b)=.v$. Let $u\equiv u'i1$ and note that $.u=.v(n-1)^\N$. By Lemma \ref{L_gen_orbitalu} $(0,.u)$ is an orbital of $H_{[u]}$, so $h(a)<h(b)=.v<.u$. By Lemma \ref{L_gen_orbitalinc}, there exists an element $f\in H_{[u]}$ such that $f(h(a))>.v$. Note that $f(h(b))=f(.v)<f(.u)=.u$. So for $g:=hf$, $g([a,b])\subseteq[.v,.u]\subseteq[u']$.
\end{proof}
\begin{Definition}
    Let $H\le F_n$ and let $s,w$ be finite $n$-ary words. We say that $w$ is \emph{$H$-equivalent to a $0$-extension of $s$} if there exists some $h\in H$ and some natural $k$ such that $h$ has the pair of branches $w\rightarrow s0^k$.
\end{Definition}
For an $n$-ary word $s'$, we write $.s'(n-1)^{\mathbb{N}}$ to denote the right end point of $[s']$. We prove the following two technical lemmas, which we will use later.
\begin{Lemma}\label{L_gen_fix_change_slope}
         Let $H$ be a subgroup of $F_n$ such that the following hold:
    \begin{itemize}
        \item[$(a)$] $[F_n,F_n]\subseteq \Cl(H)$.
        \item[$(b)$] There exists some $\tilde{f}\in H$ such that $\tilde{f}'(0^+)\neq 1$ and $\tilde{f}'(1^-)=1$.
    \end{itemize}
    Let $i\in\{0,\dots,n-2\}$, and assume there exist an element $h\in H$ and inner $n$-ary words $s,s',w$ such that the following hold:
    \begin{itemize}
        \item[$(1)$] $t(s)=i$.
        \item[$(2)$] $.s'(n-1)^\N=.s$.
        \item[$(3)$] $h$ fixes $[s']$ pointwise and has the pair of branches $s0\rightarrow s$.
        \item[$(4)$] $w$ is $H$-equivalent to a $0$-extension of $s$.
    \end{itemize}
    Let $\alpha\in (0,.w)$ and let $a,b\in \N$. Then there exists an element $g_l\in H$ which is a conjugate of a power of $h$ such that $g_l$ fixes $[\alpha,.w]$ pointwise and $g_l$ has the pair of branches $w0^a\rightarrow w0^b$.
\end{Lemma}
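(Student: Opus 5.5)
The idea is to transport the local behaviour of $h$ at the point $.s$ to the point $.w$ by conjugation. First I record what condition $(3)$ gives. Since $h$ has the pair of branches $s0\rightarrow s$, it maps $[s0]$ linearly onto $[s]$. Restricting to subintervals, for every $j\ge 1$ the element $h$ has the pair of branches $s0^j\rightarrow s0^{j-1}$. Iterating, for every integer $m$ the power $h^m$ has the pair of branches $s0^j\rightarrow s0^{j-m}$ whenever $j\ge 0$ and $j-m\ge 0$; for $m<0$ this uses that $h^{-1}$ has the pair $s\rightarrow s0$. Also $h^m$ fixes $[s']$ pointwise, and by $(2)$ this interval has right endpoint $.s$.

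By condition $(4)$ there are $f\in H$ and $k\in\N$ such that $f$ has the pair of branches $w\rightarrow s0^k$. In particular $f(.w)=.s$, and since $\alpha<.w$ we get $f(\alpha)\in(0,.s)$. The naive candidate is $f h^{m} f^{-1}$ with $m=a-b$. With the left-to-right composition convention, this element sends
\[
w0^a\rightarrow s0^{k+a}\rightarrow s0^{k+b}\rightarrow w0^b .
\]
This gives the required pair of branches, because $k+a\ge 0$ and $k+b\ge 0$.

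The only obstacle is the fixing requirement. The element $f$ maps $[\alpha,.w]$ onto $[f(\alpha),.s]$, and $h^m$ is only known to fix $[s']=[.s',.s]$. So I need $f(\alpha)\ge .s'$, which need not hold for the given $f$. To fix this, I will modify $f$ on the left of $[s0^k]$ without changing the pair $w\rightarrow s0^k$:

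\begin{itemize}
\item Apply Lemma \ref{L_gen_orbitalu} to the word $s0^k$, which satisfies $.(s0^k)=.s>0$. It says that $(0,.s)$ is an orbital of $H_{[s0^k]}$.
\item Both $f(\alpha)$ and $.s'$ lie in $(0,.s)$; note $.s'<.s$ and $.s'>0$ because $s'$ is inner. So Lemma \ref{L_gen_orbitalinc} gives $p\in H_{[s0^k]}$ with $p(f(\alpha))>.s'$.
\item Set $f_1:=fp$. Since $p$ fixes $[s0^k]$ pointwise, $f_1$ still has the pair of branches $w\rightarrow s0^k$. Moreover $f_1(\alpha)>.s'$ and $f_1(.w)=.s$.
\end{itemize}

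Now define $g_l:=f_1 h^{a-b} f_1^{-1}$, a conjugate of a power of $h$. For $x\in[\alpha,.w]$ we have $f_1(x)\in[f_1(\alpha),.s]\subseteq[s']$, which $h^{a-b}$ fixes pointwise, so $g_l(x)=x$. The computation of branches above, with $f$ replaced by $f_1$, shows that $g_l$ has the pair of branches $w0^a\rightarrow w0^b$. This completes the argument.
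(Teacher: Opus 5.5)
Your proof is correct and is essentially the paper's argument. The paper uses Lemmas \ref{L_gen_orbitalu} and \ref{L_gen_orbitalinc} to find $g$ with $g(f(\alpha))>.s'$ and sets $g_l:=fgh^{a-b}g^{-1}f^{-1}$, which is exactly your $f_1h^{a-b}f_1^{-1}$ with $f_1=fp$; the only difference is that the paper takes $g\in H_{[s]}$ rather than $p\in H_{[s0^k]}$, which does not matter.
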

\begin{proof}
    By condition $(4)$, there exist an element $f\in H$ and an integer $c \ge 0$ such that $f$ has the pair of branches $w\rightarrow s0^c$. By Lemma \ref{L_gen_orbitalu}, $(0,.s)$ is an orbital of $H_{[s]}$. Note that $f(\alpha)<f(.w)=.s$, so by Lemma \ref{L_gen_orbitalinc} there exists some $g\in H_{[s]}$ such that $g(f(\alpha))>.s'$. Denote $g_l:=fgh^{a-b}g^{-1}f^{-1}$. Note that since $f$ has the pair of branches $w0^a\rightarrow s0^{a+c}$, $g$ has the pair of branches $s0^{a+c}\rightarrow s0^{a+c}$, and $h^{a-b}$ has the pair of branches $s0^{a+c}\rightarrow s0^{b+c}$, it follows that $g_l$ has the pair of branches $w0^a\rightarrow w0^b$. Additionally, for all $x\in [\alpha,.w]$, $g(f(x))\in [.s',.s]=[s']$. Therefore, for all $x\in[\alpha,.w]$, it holds that $h^{a-b}(g(f(x)))=g(f(x))$, so $g_l(x)=x$. Thus, $g_l$ is as required.
\end{proof}
\begin{Lemma}\label{L_gen_coincide_change_slope}
         Let $H$ be a subgroup of $F_n$ such that the following hold:
    \begin{itemize}
        \item[$(a)$] $[F_n,F_n]\subseteq \Cl(H)$.
        \item[$(b)$] There exists some $\tilde{f}\in H$ such that $\tilde{f}'(0^+)\neq 1$ and $\tilde{f}'(1^-)=1$.
    \end{itemize}
    Let $i\in\{0,\dots,n-2\}$, and assume there exist elements $g,h\in H$ and inner $n$-ary words $s,s',w_1,w_2$ such that the following hold:
    \begin{itemize}
        \item[$(1)$] $t(s)=i$.
        \item[$(2)$] $.s'(n-1)^\N=.s$.
        \item[$(3)$] $h$ fixes $[s']$ pointwise and has the pair of branches $s0\rightarrow s$.
        \item[$(4)$] $w_1,w_2$ are $H$-equivalent to a $0$-extension of $s$.
        \item[$(5)$] There exist $c_1,c_2\in\N$ such that $g$ has the pair of branches $w_10^{c_1}\rightarrow w_20^{c_2}$.
    \end{itemize}
    Let $\alpha_1\in (0,.w_1)$ and let $d_1,d_2\in\N$. Then there are elements $g_l,g_r\in H$ which are conjugates of powers of $h$, such that the element $g_1:=g_lgg_r$ coincides with $g$ on $[\alpha_1,.w_1]$ and has the pair of branches $w_10^{d_1}\rightarrow w_20^{d_2}$.
\end{Lemma}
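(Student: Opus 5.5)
The plan is to apply Lemma \ref{L_gen_fix_change_slope} twice: once on the left of $g$ to adjust the exponent of $0$ near $w_1$, and once (through $w_2$) on the right of $g$ to adjust the exponent near $w_2$. Since the action is from the left to the right, $g_1=g_lgg_r$ means: first apply $g_l$, then $g$, then $g_r$. We want the following.
\begin{itemize}
\item[(i)] $g_l$ fixes $[\alpha_1,.w_1]$ pointwise and has the pair of branches $w_10^{d_1}\rightarrow w_10^{c_1+e}$ for a suitable $e\ge 0$.
\item[(ii)] $g$ carries $w_10^{c_1+e}$ to $w_20^{c_2+e}$.
\item[(iii)] $g_r$ fixes $g([\alpha_1,.w_1])$ pointwise and has the pair of branches $w_20^{c_2+e}\rightarrow w_20^{d_2}$.
\end{itemize}
Step (ii) is automatic: since $g$ maps $[w_10^{c_1}]$ linearly onto $[w_20^{c_2}]$, it also has the pair of branches $w_10^{c_1}0^e\rightarrow w_20^{c_2}0^e$ for every $e\ge 0$. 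For simplicity, take $e=0$.

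First, apply Lemma \ref{L_gen_fix_change_slope} with $w=w_1$, $\alpha=\alpha_1$, $a=d_1$ and $b=c_1$. Hypothesis (4) of that lemma is our hypothesis (4) for $w_1$. This gives a conjugate $g_l$ of a power of $h$ that fixes $[\alpha_1,.w_1]$ pointwise and has the pair of branches $w_10^{d_1}\rightarrow w_10^{c_1}$.

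Next, we need a point $\alpha_2\in(0,.w_2)$ such that $g([\alpha_1,.w_1])\subseteq[\alpha_2,.w_2]$. Since $g$ has the pair of branches $w_10^{c_1}\rightarrow w_20^{c_2}$, the left endpoints agree: $g(.w_1)=g(.w_10^{c_1})=.w_20^{c_2}=.w_2$. Because $g$ is increasing, $g([\alpha_1,.w_1])=[g(\alpha_1),.w_2]$. Set $\alpha_2:=g(\alpha_1)$. Then $\alpha_2\in(0,.w_2)$, since $0<\alpha_1<.w_1$ and $g$ fixes $0$.

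Now apply Lemma \ref{L_gen_fix_change_slope} with $w=w_2$, $\alpha=\alpha_2$, $a=c_2$ and $b=d_2$. This gives a conjugate $g_r$ of a power of $h$ that fixes $[\alpha_2,.w_2]$ pointwise and has the pair of branches $w_20^{c_2}\rightarrow w_20^{d_2}$.

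Finally, verify the two required properties of $g_1=g_lgg_r$.
\begin{itemize}
\item On $[\alpha_1,.w_1]$: $g_l$ acts as the identity, $g$ maps this interval into $[\alpha_2,.w_2]$, and $g_r$ is the identity there. Hence $g_1$ coincides with $g$ on $[\alpha_1,.w_1]$.
\item On $[w_10^{d_1}]$: the composition of linear maps is $[w_10^{d_1}]\to[w_10^{c_1}]\to[w_20^{c_2}]\to[w_20^{d_2}]$. So $g_1$ maps $[w_10^{d_1}]$ linearly onto $[w_20^{d_2}]$, which is exactly the pair of branches $w_10^{d_1}\rightarrow w_20^{d_2}$.
\end{itemize}

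The only delicate point is the endpoint bookkeeping: checking that $g(.w_1)=.w_2$, so that the interval fixed by $g_r$ is precisely the image of the interval fixed by $g_l$ under $g$. The order-of-composition convention also needs care. The rest is direct from Lemma \ref{L_gen_fix_change_slope}.
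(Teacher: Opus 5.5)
Your proposal is correct and is essentially the paper's proof. You apply Lemma \ref{L_gen_fix_change_slope} with $w=w_1$, $\alpha=\alpha_1$, $a=d_1$, $b=c_1$ to obtain $g_l$, and again with $w=w_2$, $\alpha=g(\alpha_1)$, $a=c_2$, $b=d_2$ to obtain $g_r$, using $g(.w_1)=.w_2$. You then verify agreement with $g$ on $[\alpha_1,.w_1]$ and the composite pair of branches $w_10^{d_1}\rightarrow w_20^{d_2}$ exactly as the paper does (the auxiliary parameter $e$ is unnecessary, as you note).
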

\begin{proof}
    By applying Lemma \ref{L_gen_fix_change_slope} with the element $h$, the words $s,s',w\equiv w_1$, and the parameters $\alpha\equiv\alpha_1, a\equiv d_1, b\equiv c_1$, there exists some $g_l\in H$ which is a conjugate of a power of $h$ such that $g_l$ fixes $[\alpha_1,.w_1]$ pointwise and has the pair of branches $w_10^{d_1}\rightarrow w_10^{c_1}$. Note that $g(\alpha_1)<g(.w_1)=.w_2$, so by applying Lemma \ref{L_gen_fix_change_slope} again with the element $h$, the words $s,s',w\equiv w_2$, and the parameters $\alpha\equiv g(\alpha_1), a\equiv c_2, b\equiv d_2$, we obtain an element $g_r\in H$ which is a conjugate of a power of $h$ such that $g_r$ fixes $[g(\alpha_1),.w_2]$ pointwise and has the pair of branches $w_20^{c_2}\rightarrow w_20^{d_2}$. 
    
    Let $g_1:=g_lgg_r$. Since $g_l$ fixes $[\alpha_1,.w_1]$ and $g_r$ fixes $[g(\alpha_1),.w_2]$, it follows that $g_1$ coincides with $g$ on $[\alpha_1,.w_1]$. Since $g_l$ has the pair of branches $w_10^{d_1}\rightarrow w_10^{c_1}$, $g$ has the pair of branches $w_10^{c_1}\rightarrow w_20^{c_2}$, and $g_r$ has the pair of branches $w_20^{c_2}\rightarrow w_20^{d_2}$, we get that $g_1$ has the pair of branches $w_10^{d_1}\rightarrow w_20^{d_2}$.
\end{proof}
\begin{Remark}\label{r_gen_words_equiv}
         Let $H$ be a subgroup of $F_n$ such that the following hold.
    \begin{itemize}
        \item[$(a)$] $[F_n,F_n]\subseteq \Cl(H)$.
        \item[$(b)$] There exists an inner $n$-ary word $u$, such that for every two $n$-ary words $v_1,v_2$ satisfying $t(v_1)=t(v_2)$, there exists an element $h_{uv_1,uv_2}\in H$ with a pair of branches $uv_1\rightarrow uv_2$.
    \end{itemize}
    Let $u_1,u_2$ be two inner $n$-ary words. Then there exists some natural $k\ge 0$ such that for every two $n$-ary words $w_1,w_2$: if $|w_1|,|w_2|\ge k$ and $t(u_1w_1)=t(u_2w_2)$, then there exists an element $h_{u_1w_1,u_2w_2}\in H$ with the pair of branches $u_1w_1\rightarrow u_2w_2$.
\end{Remark}
\begin{proof}
    Take some $i_1,i_2\in\{0,\dots,n-1\}$ such that $t(u_1)=t(ui_1),t(u_2)=t(ui_2)$. By Lemma \ref{l_gen_closure_commutator}, $u_1^+=(ui_1)^+$ and $u_2^+=(ui_2)^+$ in $\mathcal{L}(H)$. By Lemma \ref{l_pre_pair_of_branches_H}, with the words $u_1,ui_1$ there exists some $k_1\in \N$ such that for every $n$-ary word $w_1$ of length greater or equal to $k_1$, there exists an element $h_{u_1w_1,ui_1w_1}\in H$ with the pair of branches $u_1w_1\rightarrow ui_1w_1$. Similarly there exists some $k_2\in\N$ such that for every $n$-ary word $w_2$ with length greater or equal to $k_2$ there exists an element $h_{ui_2w_2,u_2w_2}\in H$ with the pair of branches $ui_2w_2\rightarrow u_2w_2$.
    
    Set $k:=\max\{k_1,k_2\}$, and let $w_1,w_2$ be $n$-ary words of length greater or equal to $k$ such that $t(u_1w_1)=t(u_2w_2)$. Then, $t(ui_1w_1)=t(ui_2w_2)$, which implies $t(i_1w_1)=t(i_2w_2)$. By condition $(b)$, there is an element $h_{ui_1w_1,ui_2w_2}\in H$ with the pair of branches $ui_1w_1\rightarrow ui_2w_2$. Set $h_{u_1w_1,u_2w_2}:=h_{u_1w_1,ui_1w_1}h_{ui_1w_1,ui_2w_2}h_{ui_2w_2,u_2w_2}\in H$. Then $h_{u_1w_1,u_2w_2}$ has the pair of branches $u_1w_1\rightarrow u_2w_2$.
\end{proof}
We will later use the following corollary from the last remark.
\begin{Corollary}\label{c_gen_ext_equiv}
    Let $H$ be a subgroup of $F_n$ such that the following hold:
    \begin{itemize}
        \item[$(a)$] $[F_n,F_n]\subseteq \Cl(H)$.
        \item[$(b)$] There exists an inner $n$-ary word $u$ such that for every two $n$-ary words $v_1,v_2$ satisfying $t(v_1)=t(v_2)$, there exists an element $h_{uv_1,uv_2} \in H$ with the pair of branches $uv_1\rightarrow uv_2$.
    \end{itemize}
    Let $u_1,u_2$ be two inner $n$-ary words. Then there exists an integer $k\ge 0$ such that for every $n$-ary word $w$, if $|w|\ge k$ and $t(u_1w)=t(u_2)$, then $u_1w$ is $H$-equivalent to a $0$-extension of $u_2$.
\end{Corollary}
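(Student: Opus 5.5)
We derive the corollary from Remark \ref{r_gen_words_equiv}, applied to the pair of inner words $u_1$ and $u_2$, with the target word taken to be a $0$-extension of $u_2$. Let $k$ be the integer that Remark \ref{r_gen_words_equiv} provides for $u_1,u_2$. The claim is that this same $k$ works for the corollary.

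Let $w$ be an $n$-ary word with $|w|\ge k$ and $t(u_1w)=t(u_2)$. Put $w_1:=w$ and $w_2:=0^k$. Then $|w_1|,|w_2|\ge k$. Appending zeros does not change the digit sum, so $t(u_20^k)=t(u_2)=t(u_1w)$, and hence $t(u_1w_1)=t(u_2w_2)$.

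Remark \ref{r_gen_words_equiv} therefore gives an element $h\in H$ with the pair of branches $u_1w\rightarrow u_20^k$. By the definition of $H$-equivalence, this says exactly that $u_1w$ is $H$-equivalent to a $0$-extension of $u_2$.

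The only delicate point is that the second word must be allowed to grow. Lemma \ref{l_pre_pair_of_branches_H}, which underlies Remark \ref{r_gen_words_equiv}, yields pairs of branches only for sufficiently long extensions. We cannot use $u_2$ itself, but the $0$-extension $u_20^k$ absorbs the required length while leaving $t$ unchanged. No further obstacle arises.
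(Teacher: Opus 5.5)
Your proof is correct and is essentially the paper's intended argument: the paper states this as an immediate corollary of Remark \ref{r_gen_words_equiv} without writing out a proof. Specializing that remark to $w_1=w$ and $w_2=0^k$, and using $t(u_20^k)=t(u_2)$, is exactly the one-line step that makes it immediate.
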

\begin{Lemma}\label{L_gen_commutator_coincide}
    Let $H\le F_n$ be a subgroup such that the following conditions hold: 
    \begin{itemize}
        \item[$(1)$] $[F_n,F_n]\subseteq \Cl (H)$.
        \item[$(2)$] There exists some $\tilde{f}\in H$ such that $\tilde{f}'(0^+)\neq 1$ and $\tilde{f}'(1^-)=1$.
        \item[$(3)$] For each $i\in\{0,\dots,n-2\}$, there exist inner $n$-ary words $s_i,s_i'$ and an element $h_i\in H$, such that the following hold: $.s_i\in D_{n,i}$, $.s_i=.s'_i(n-1)^\N$, $h_i$ fixes $[s_i']$ pointwise, and $h_i$ has the pair of branches $s_i0\rightarrow s_i$.
        \item[$(4)$] There exists an inner $n$-ary word $u$, such that for every two $n$-ary words $v_1,v_2$ satisfying $t(v_1)=t(v_2)$, there exists an element $h_{uv_1,uv_2}\in H$ with a pair of branches $uv_1\rightarrow uv_2$.
    \end{itemize}
    Let $f\in\widetilde{F}_n$, and $0<a<b<1$. Then there exists an element $g_1\in H\cap\widetilde{F}_n$ such that $g_1$ coincides with $f$ on $[a,b]$ (where $\widetilde{F}_n$ is the inner-support subgroup of $F_n$, defined in Definition \ref{d_pre_inner_support}).
\end{Lemma}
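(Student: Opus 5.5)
The plan is to build $g_1$ by gluing, from left to right, finitely many local pieces of elements of $H$. Each gluing step uses the slope-adjusting elements $h_i$ of condition $(3)$ through Lemma \ref{L_gen_coincide_change_slope}.

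\emph{Step 1: reduce to finitely many branch pairs, each realized in $H$.} Enlarge $[a,b]$ slightly so that $a,b$ are $n$-adic. Choose a tree-diagram for $f$ and refine it, so that $[a,b]$ is a union of consecutive $n$-adic intervals $[u_1],\dots,[u_m]$ and $f$ has the pairs of branches $u_j\to v_j$. Since $0<a<b<1$, all the $u_j,v_j$ are inner, and $t(u_j)=t(v_j)$ by Lemma \ref{lPairOfBranches}. By condition $(4)$, Remark \ref{r_gen_words_equiv} applies. Refining all the $u_j$ by words of a common large length, which only lengthens the list, we may assume that for each $j$ there is $h^{(j)}\in H$ with the pair of branches $u_j\to v_j$. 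The same refinement, together with Corollary \ref{c_gen_ext_equiv} applied with $u_2=s_i$, also makes every $u_j$ and every $v_j$ $H$-equivalent to a $0$-extension of $s_{t(u_j)}$.

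\emph{Step 2: the gluing induction.} Suppose $G_j\in H$ coincides with $f$ on $[a,x_j]$, where $x_j$ is the right endpoint of $[u_j]$, so $x_j=.u_{j+1}$. Set $i=t(x_j)=t(u_{j+1})$. Near $x_j$ from the right, $G_j$ is linear with slope a power of $n$ and sends $x_j$ to $f(x_j)=.v_{j+1}$. Hence $G_j$ has a pair of branches $u_{j+1}0^{c_1}\to v_{j+1}0^{c_2}$ for suitable $c_1,c_2$. Apply Lemma \ref{L_gen_coincide_change_slope} with $s=s_i$, $s'=s_i'$, $h=h_i$, $w_1=u_{j+1}$, $w_2=v_{j+1}$, and $\alpha_1$ slightly larger than $a$. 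This gives $G_j'\in H$ which agrees with $G_j$ on $[\alpha_1,x_j]$ and has the pair of branches $u_{j+1}0^{d}\to v_{j+1}0^{d}$, for a $d$ of our choosing.

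Next, apply Lemma \ref{L_gen_fix_change_slope} on the right side of $x_j$, with the roles mirrored via $h^{(j+1)}$. Concretely, I would form $e:=(h^{(j+1)})^{-1}$-type corrections. Both $G_j'$ and $h^{(j+1)}$ have the pair $u_{j+1}0^d\to v_{j+1}0^d$, so $\phi:=G_j'(h^{(j+1)})^{-1}$ is the identity on $[v_{j+1}0^d]$, a right neighbourhood of $f(x_j)$. The aim is to produce $\psi\in H$ that is the identity to the right of $f(x_j)$ on $f([u_{j+1}])$ and agrees with $\phi$ to the left of $f(x_j)$. Then $G_{j+1}:=G_j'\psi^{-1}$, or $h^{(j+1)}$ corrected by $\psi$, agrees with $f$ on $[a,x_{j+1}]$.

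\emph{Main obstacle: producing the cut-off element $\psi$.} This is where Lemmas \ref{L_gen_orbitalu} and \ref{L_gen_orbitalinc} enter. I would first try the conjugation trick from the proof of Lemma \ref{L_gen_fix_change_slope}. Conjugate by some $k\in H_{[v]}$, for a suitable word $v$ to the right of $f(x_j)$, which pushes the left region where $\phi$ differs from the identity into a zone where a known element behaves correctly. Repeating for $j=1,\dots,m-1$ yields $g\in H$ agreeing with $f$ on $[a,b]$.

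\emph{Step 3: inner support.} It remains to make $g\in\widetilde F_n$. Using hypothesis $(2)$ and its mirror, and conjugates of $\tilde f$ placed, by Lemma \ref{L_gen_interval}, so as to fix $[a,b]$ (via the orbital $(0,.u)$ of $H_{[u]}$ and its right-hand analogue), multiply $g$ on either side by elements of $H$ that fix $[a,b]$ pointwise and cancel the slopes $g'(0^+)$ and $g'(1^-)$. The right-hand analogue requires an element with trivial slope at $0$ and nontrivial slope at $1$. I expect to obtain it by conjugating $\tilde f$ by an element of $H$ and taking a commutator-type product, and this is a secondary point that needs care.
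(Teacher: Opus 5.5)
Your overall architecture, gluing left to right with Lemma \ref{L_gen_coincide_change_slope}, is the paper's. However, the proposal has two gaps. The first is an unfinished step that turns out to be unnecessary. The second is a slope-correction mechanism that does not work.

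\emph{Step 2.} The cut-off element $\psi$, which you call the main obstacle, is never constructed, but you do not need it. Lemma \ref{L_gen_coincide_change_slope} allows arbitrary $d_1,d_2\in\N$, in particular $d_1=d_2=0$. With that choice $G_j'$ has the pair of branches $u_{j+1}\to v_{j+1}$, i.e.\ it maps $[u_{j+1}]$ linearly onto $[v_{j+1}]=f([u_{j+1}])$. Together with its agreement with $G_j$ on $[\alpha_1,x_j]$, it therefore already coincides with $f$ up to $x_{j+1}$, and you may set $G_{j+1}:=G_j'$. Fix $\alpha_1$ once and for all as the left end of the whole region (the lemma only needs $0<\alpha_1<.w_1$), so the agreement interval does not shrink. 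Your Step 1 refinement, which makes each $u_j,v_j$ itself $H$-equivalent to a $0$-extension of $s_{t(u_j)}$, is exactly what this application requires. After this change the elements $h^{(j)}$, $j\ge 2$, play no role.

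\emph{Step 3 fails as designed.} The map $h\mapsto(\log_n h'(0^+),\log_n h'(1^-))$ is a homomorphism $F_n\to\Z^2$ that is constant on conjugacy classes. Hence every product of conjugates of $\tilde f^{\pm1}$ and commutators has slope a power of $\tilde f'(0^+)$ at $0$ and slope $1$ at $1$. Such products can cancel $g'(0^+)$ only when it is a power of $\tilde f'(0^+)$, and they can never give the right-hand element you want.

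Nor do the hypotheses supply a right-hand element. Take the subgroup of all $h\in F_n$ with $(\log_n h'(0^+),\log_n h'(1^-))\in\langle(1,1),(2,0)\rangle$. It satisfies $(1)$--$(4)$ and contains elements with $h'(0^+)=h'(1^-)=n$. But it contains no element with endpoint slopes $(n,1)$ or $(1,n)$, so for such a $g$ no pair of one-sided corrections exists. Because your $g$ starts from an arbitrary $h^{(1)}\in H$, its endpoint slopes are simply uncontrolled.

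The missing idea, which is how the paper argues, is to control what $g$ is built from:
\begin{itemize}
\item Use $f\in\widetilde{F}_n$ to enlarge $[a,b]$ to an $n$-adic interval $[a_1,b_1]$ whose first and last pieces $[u_1]$, $[u_m]$ are fixed pointwise by $f$.
\item Start from $G_1=\mathrm{id}$ and modify only through Lemma \ref{L_gen_coincide_change_slope}.
\item Then $g$ is a product of conjugates of powers of the $h_i$. So $g'(0^+)=\prod_i h_i'(0^+)^{l_i}$ and $g'(1^-)=\prod_i h_i'(1^-)^{l_i}$, where $l_i$ is the total exponent of $h_i$.
\item Choose $o_i\in H$ by Lemma \ref{L_gen_interval} with $o_i^{-1}([a_1,b_1])\subseteq[s_i']$. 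Each $h_i^{o_i}$ then fixes $[a_1,b_1]=f([a_1,b_1])$ pointwise and has the same endpoint slopes as $h_i$.
\item Consequently $g_1:=g\prod_i(h_i^{o_i})^{-l_i}$ lies in $H\cap\widetilde{F}_n$ and agrees with $f$ on $[a_1,b_1]\supseteq[a,b]$.
\end{itemize}
Hypothesis $(2)$ enters only through the auxiliary Lemmas \ref{L_gen_orbitalu}, \ref{L_gen_interval} and \ref{L_gen_coincide_change_slope}, not in the final slope correction.
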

\begin{proof}
    Since $f\in\widetilde{F}_n$, it fixes a small neighborhood of $0$ and a small neighborhood of $1$, so there are some $a_1<a$ and $b_1>b$ such that $f$ fixes $[0,a_1]$ and $[b_1,1]$. Let $(T_1,T_2)$ be a tree-diagram representing $f$, and denote its pairs of branches by $u_0\rightarrow v_0,\dots,u_{k-1}\rightarrow v_{k-1}$. We can assume that $a_1\notin[u_0]\cup[u_1]$ and $b_1\notin[u_{k-2}]\cup[u_{k-1}]$ (if not, we can take another tree-diagram of $f$ in which this holds). Thus $u_0\equiv v_0, u_1\equiv v_1, u_{k-2}\equiv v_{k-2}, u_{k-1}\equiv v_{k-1}$.
    
    We will first show that there exists some $g\in H$ that is a product of conjugates of $h_i$ (for various $i\in\{0,\dots,n-2\}$), such that for each $j\in\{1,\dots,k-2\}$, $g$ has the pair of branches $u_j\rightarrow v_j$. Let $f_0=id$ be the identity element. We construct elements $f_1,\dots,f_{k-2}$ recursively, such that for each $j\in\{1,\dots,k-2\}$, the following conditions hold:
    \begin{itemize}
        \item[$(A)$] $f_j$ has the pair of branches $u_j\rightarrow v_j$.
        \item[$(B)$] $f_j$ coincides with $f_{j-1}$ on $[.u_1,.u_j]$.
        \item[$(C)$] $f_{j+1}=l_jf_jr_j$, where $l_j$ and $r_j$ are conjugate to a power of $h_{j+1\pmod{n-1}}$.
    \end{itemize}
    Then for $g:=f_{k-2}$ we will have the desired result.
    
    For $j=1$, we take $f_1=id$, and note that the conditions hold. Now let $1\le j\le k-3$ and assume conditions $(A),(B),(C)$ hold for $f_j$. Since $j\le k-3$, $u_{j+1},v_{j+1}$ are inner $n$-ary words. Thus, there exist $n$-ary words $p,q$ and integers $d_1,d_2\ge 0$ such that $u_{j+1}\equiv p0^{d_1}$ and $v_{j+1}\equiv q0^{d_2}$. Note that either $p$ is an inner $n$-ary word, or $p \equiv (n-1)^{m_1}$ for some integer $m_1 > 0$ (in which case we must have $d_1 > 0$, since $u_{j+1}$ is an inner word). The analogous statement holds for $q$ with respect to some integer $m_2$ and $d_2$. Note that $t(p)=t(u_{j+1}) \equiv j+1 \pmod{n-1}$ and $t(q)=t(v_{j+1}) \equiv j+1 \pmod{n-1}$. Let $(T_1',T_2')$ be a tree-diagram of $f_j$ that has the pair of branches $u_j\rightarrow v_j$. Note that there exist integers $c_1,c_2\ge 0$, such that $(T_1',T_2')$ has a pair of branches $p0^{c_1}\rightarrow q0^{c_2}$.
    
    We first consider the case where $p$ and $q$ are inner $n$-ary words. By Corollary \ref{c_gen_ext_equiv}, there exists some integer $k\ge 0$ such that both $p0^k$ and $q0^k$ are $H$-equivalent to a $0$-extension of $s_{j+1\pmod{n-1}}$. By adding the suffix $0^k$ to $p0^{c_1},q0^{c_2}$, we get that $f_j$ has the pair of branches $p0^{c_1+k}\rightarrow q0^{c_2+k}$. By Lemma \ref{L_gen_coincide_change_slope} with $i=j+1\pmod{n-1}$, $g=f_j$, $h=h_{j+1\pmod{n-1}}$, $s=s_{j+1\pmod{n-1}}$, $s'=s'_{j+1\pmod{n-1}}$, $w_1=p$, $w_2=q$, $c_1=c_1+k$, $c_2=c_2+k$, $d_1,d_2$, and $\alpha_1=.u_1$, we obtain elements $l_j,r_j\in H$ which are conjugates of powers of $h_{j+1\pmod{n-1}}$ such that $l_jf_jr_j$ coincides with $f_j$ on $[.u_1,.u_{j+1}]$, and has the pair of branches $p0^{d_1}\rightarrow q0^{d_2}$. Note that $p0^{d_1}\equiv u_{j+1}$ and $q0^{d_2}\equiv v_{j+1}$, so $l_jf_jr_j$ has the pair of branches $u_{j+1}\rightarrow v_{j+1}$. We set $f_{j+1}:=l_jf_jr_j$, and note that it satisfies conditions $(A),(B),(C)$.

    If $p$ (respectively, $q$) is not an inner $n$-ary word, then we have $p\equiv (n-1)^{m_1}$ and $d_1>0$ (respectively, $q\equiv (n-1)^{m_2}$ and $d_2>0$). Then $c_1>0$ (respectively, $c_2>0$). So we let $p'\equiv p0$, $c_1'=c_1-1$, $d_1'=d_1-1$ (respectively, $q'\equiv q0$, $c_2'=c_2-1$, $d_2'=d_2-1$), and note that $p'$ is an inner $n$-ary word and $c_1',d_1'\ge 0$ (respectively, $q'$ is an inner $n$-ary word and $c_2',d_2'\ge 0$). Thus, by a similar argument to the one given in the previous paragraph, we get elements $l_j,r_j$ that are conjugate to powers of $h_{j+1\pmod{n-1}}$ such that $l_jf_jr_j$ coincides with $f_j$ on $[.u_1,.u_{j+1}]$, and has the pair of branches $p0^{d_1}\rightarrow q0^{d_2}$.
    
    We take $g:=f_{k-2}\in H$ and note that $g$ has the pairs of branches $u_j\rightarrow v_j$ for $1\le j\le k-2$. So it agrees with $f$ on $[.u_1,.u_{k-1}]$. Since $g$ is a product of conjugates of powers of $h_i$ (for various $i\in\{0,\dots,n-2\}$), for each $i\in\{0,\dots,n-2\}$ there exists an integer $l_i$ (which is the sum of powers of $h_i$ that appeared in this construction of $g$) such that $g'(0^+)=\prod_{i=0}^{n-2}{(h_i'(0^+))^{l_i}}$ and $g'(1^-)=\prod_{i=0}^{n-2}{(h_i'(1^-))^{l_i}}$. By applying Lemma \ref{L_gen_interval}, we get that for all $i\in\{0,\dots,n-2\}$ there exists an element $o_i\in H$ such that $o_i^{-1}([.u_1,.u_{k-1}])\subseteq [s'_i]$.
    
    Let $g_1:=g\cdot\prod_{i=0}^{n-2}{(h_i^{o_i})^{-l_i}}$. Note that $g_1\in H$. Additionally, note that $g_1'(0^+)=g'(0^+)\cdot\prod_{i=0}^{n-2}{((h_i^{o_i})'(0^+))^{-l_i}}=\prod_{i=0}^{n-2}{(h_i'(0^+))^{l_i}}\cdot\prod_{i=0}^{n-2}{(h_i'(0^+))^{-l_i}}=1$. Similarly, $g_1'(1^-)=1$. So $g_1\in\widetilde{F}_n$. Finally, for each $i\in\{0,\dots,n-2\}$, $h_i$ fixes $[s'_i]$, so $h_i^{o_i}$ fixes $[.u_1,.u_{k-1}]$. Thus $g_1$ coincides with $g$ (and therefore with $f$) on $[.u_1,.u_{k-1}]$. Since $[a,b]\subseteq [a_1,b_1]\subseteq [.u_1,.u_{k-1}]$, we are done.
\end{proof}
\begin{Lemma}\label{L_gen_contains_commutator}
    Let $H\le F_n$ be a subgroup such that the following conditions hold: 
    \begin{itemize}
        \item[$(1)$] $[F_n,F_n]\subseteq \Cl (H)$.
        \item[$(2)$] There exists some $\tilde{f}\in H$ such that $\tilde{f}'(0^+)\neq 1$ and $\tilde{f}'(1^-)=1$.
        \item[$(3)$] For each $i\in\{0,\dots,n-2\}$, there exist inner $n$-ary words $s_i,s_i'$ and an element $h_i\in H$, such that the following hold: $.s_i\in D_{n,i}$, $.s_i=.s'_i(n-1)^\N$, $h_i$ fixes $[s_i']$ pointwise, and $h_i$ has the pair of branches $s_i0\rightarrow s_i$.
        \item[$(4)$] There exists an inner $n$-ary word $u$, such that for every two $n$-ary words $v_1,v_2$ satisfying $t(v_1)=t(v_2)$, there exists an element $h_{uv_1,uv_2}\in H$ with a pair of branches $uv_1\rightarrow uv_2$.
    \end{itemize}
    Then $H$ contains $[F_n,F_n]$.
\end{Lemma}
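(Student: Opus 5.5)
The plan is to show that every element of $[F_n,F_n]$ belongs to $H$, using Lemma \ref{L_gen_commutator_coincide} to approximate such an element on a large middle interval by an element of $H\cap\widetilde{F}_n$. The remaining discrepancy near the endpoints will then be handled by a conjugation argument. Since $[F_n,F_n]$ is generated by elements supported in compact subintervals of $(0,1)$, it suffices to fix $f\in[F_n,F_n]$ with support in some $[a,b]\subset(0,1)$, where $a<b$ are $n$-adic, and show $f\in H$.

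First apply Lemma \ref{L_gen_commutator_coincide} to $f$ on an enlarged interval $[a',b']\supseteq[a,b]$. This yields $g_1\in H\cap\widetilde{F}_n$ agreeing with $f$ on $[a',b']$. Then $f g_1^{-1}$ is the identity on $g_1$-image of the middle region, so it is supported near $0$ and $1$. Since $g_1\in\widetilde{F}_n$, it is also the identity on small neighborhoods of $0$ and $1$. This alone does not finish the proof, because $g_1$ may act nontrivially on $(0,a')$ and $(b',1)$. So I would instead aim to build an element of $H$ that agrees with $f$ everywhere.

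The cleaner route is a shrinking trick. Choose $n$-adic points $0<c<a<b<d<1$, and by Lemma \ref{L_gen_interval} (using hypothesis (2)) pick $o\in H$ mapping the whole support region $[c,d]$ into a tiny interval. Equivalently, conjugate $f$ so that its support lies in $[a,b]$ with plenty of room, using the conjugation invariance of $[F_n,F_n]$: if $f^{o}\in H$ then $f\in H$. Next apply Lemma \ref{L_gen_commutator_coincide} twice. First, for $f$ on $[c,d]$, obtain $g_1\in H\cap \widetilde F_n$ with $g_1=f$ on $[c,d]$; in particular $g_1$ fixes $[c,a]$ and $[b,d]$ pointwise. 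Second, for the identity on $[c,d]$... this only recovers $g_1$, so I need a genuine truncation. Take $g_1$ and an element $k\in H$ fixing $[c,d]$ pointwise whose action outside $[c,d]$ matches $g_1$. Then $g_1k^{-1}=f$.

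To produce $k$, apply Lemma \ref{L_gen_commutator_coincide} to the element $\hat g\in\widetilde{F}_n$ defined to equal $g_1$ on $[0,c]\cup[d,1]$ and the identity on $[c,d]$. This $\hat g$ lies in $\widetilde F_n$ because $g_1$ fixes $c$ and $d$ and $c,d$ are $n$-adic. The lemma only gives agreement on a compact subinterval of $(0,1)$, so the endpoint regions stay uncontrolled; this is the \textbf{main obstacle}. To get around it, I would use commutators. Since $f$ is supported in $[a,b]$, write $f$ as a product of commutators $[x,y]$ of elements $x,y\in\widetilde F_n$ supported in $[c,d]$. This is possible because $[F_n,F_n]$ restricted to a subinterval is perfect-like, via the standard fact that $[F_n,F_n]$ is simple and generated by commutators of compactly supported elements. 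For each such $x,y$, Lemma \ref{L_gen_commutator_coincide} gives $\bar x,\bar y\in H$ agreeing with them on $[c,d]$. Then I would verify that $[\bar x,\bar y]$ equals $[x,y]$: after first conjugating via Lemma \ref{L_gen_interval} so that the parts of $\bar x,\bar y$ outside $[c,d]$ have disjoint supports from each other's region, their commutator is trivial outside $[c,d]$. The delicate point is arranging this disjointness, and I would do it by choosing $\bar y$ conjugated by an element of $H_{[c,d]}$, using the orbital Lemma \ref{L_gen_orbitalu}, which pushes its outside support far from that of $\bar x$. Then $f\in H$.
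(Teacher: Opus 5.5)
There is a genuine gap. Your final strategy is in the right spirit: write an element of $[F_n,F_n]$ as a product of commutators $[x,y]$ of elements supported in a compact $[c,d]\subset(0,1)$, lift $x,y$ to $\bar x,\bar y\in H\cap\widetilde F_n$ by Lemma \ref{L_gen_commutator_coincide}, and show $[\bar x,\bar y]=[x,y]$. The step that makes this work, however, is not established.

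Write $\bar x=xe_x$ and $\bar y=ye_y$, with $e_x,e_y$ supported in $[0,c]\cup[d,1]$. You need $[e_x,e_y]=1$, and your mechanism for this does not work. You propose to conjugate $\bar y$ by an element of $H_{[c,d]}$, coming from Lemma \ref{L_gen_orbitalu}, so that the supports of $e_x$ and $e_y$ become disjoint. There are two problems.

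\begin{itemize}
\item Lemma \ref{L_gen_commutator_coincide} gives no control outside $[c,d]$. Both $e_x$ and $e_y$ may move every point of some $(c-\delta,c)$; for example, $\bar x$ may have left slope $\neq 1$ at $c$. Any $o$ fixing $c$ maps a left neighbourhood of $c$ onto a left neighbourhood of $c$. Hence the supports of $e_x$ and $e_y^{o}$ still intersect, and the same happens at $d$.
\item Lemma \ref{L_gen_orbitalu} is one-sided: it only gives dynamics on $(0,.u)$, and hypothesis (2) says nothing useful at $1$. Elements of $H_{[u]}$ are uncontrolled to the right of $[u]$, so you have no tool to move the part of $\operatorname{supp}(e_y)$ lying in $(d,1)$ off that of $e_x$.
\end{itemize}

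The earlier attempts with $g_1$, $\hat g$ and $k$ are circular, as you noticed yourself.

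The missing idea needs no conjugation and no dynamics. Lemma \ref{L_gen_commutator_coincide} allows an arbitrary interval of agreement inside $(0,1)$, and $\bar x\in\widetilde F_n$ has compact support in $(0,1)$.

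\begin{itemize}
\item First choose $\bar x$.
\item Then pick $[\alpha,\beta]\subset(0,1)$ containing $\operatorname{supp}(\bar x)\cup[c,d]$.
\item Take $\bar y\in H\cap\widetilde F_n$ agreeing with $y$ on $[\alpha,\beta]$.
\end{itemize}

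Since $y$ is the identity outside $[c,d]$, $\bar y$ is the identity on $[\alpha,\beta]\setminus[c,d]$. So $e_y$ moves only points outside $[\alpha,\beta]$, while $e_x$ moves only points of $\operatorname{supp}(\bar x)\subseteq[\alpha,\beta]$. Hence $e_x,e_y$ commute, and $[\bar x,\bar y]=[x,y]\in H$.

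This nesting is exactly the paper's argument. The paper applies it to generators $y_0,\dots,y_{n-1}$ of $F_n|_{[a,b]}\cong F_n$ with nested intervals $[a_i,b_i]$, so that the supports of the lifts $g_i$ pairwise meet only inside $[a,b]$. Then the derived subgroup of $\langle g_0,\dots,g_{n-1}\rangle$ equals that of $F_n|_{[a,b]}$, and simplicity of $[F_n,F_n]$ finishes the proof.
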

\begin{proof}
    Denote by $\restr{F_n}{[a,b]}$ the subgroup of elements in $F_n$ with support contained in $[a,b]$. It is a standard fact that for $a\in D_{n,0},b\in D_{n,1}$ (if $n=2$, denote $D_{2,1}:=D_{2,0}$) such that $a<b$, $\restr{F_n}{[a,b]}$ is isomorphic to $F_n$ (an isomorphism can be obtained, for example by a conjugation with a piecewise linear function - see \cite{B1,CFP1996}).

    Note that the union of the derived subgroups $[\restr{F_n}{[a,b]},\restr{F_n}{[a,b]}]$ over all $a<b$ is a normal subgroup of $[F_n,F_n]$. Since $[F_n,F_n]$ is simple (\cite{B1}), this union equals $[F_n,F_n]$. So it suffices to show that $H$ contains $[\restr{F_n}{[a,b]}, \restr{F_n}{[a,b]}]$ for any given $a<b$ such that $a\in D_{n,0},b\in D_{n,1}$. 

    Let $a\in D_{n,0}, b\in D_{n,1}$ such that $a<b$. Since $\restr{F_n}{[a,b]} \cong F_n$, we can choose a generating set $\{y_0, \dots, y_{n-1}\}$ for it. We will construct elements $g_0, \dots, g_{n-1} \in H$ such that the following conditions hold:
    \begin{itemize}
        \item[$(i)$] For each $j\in\{0,\dots,n-1\}$, $g_j$ coincides with $y_j$ on $[a,b]$.
        \item[$(ii)$] For every pair $i<j\in\{0,\dots,n-1\}$, the intersection between the support of $g_i$ and the support of $g_j$ is contained in $[a,b]$.
    \end{itemize}
    These conditions imply that for all $i\neq j\in\{0,\dots,n-1\}$, $[g_i,g_j] = [y_i,y_j]$. Consequently, the derived subgroup of the group generated by $\{g_0,\dots,g_{n-1}\}$ equals the derived subgroup of $\restr{F_n}{[a,b]}$ (this holds because, by conditions $(i)$ and $(ii)$, $g_i$ and $g_j$ commute outside $[a,b]$ for all $i \neq j$).
    
    By Lemma \ref{L_gen_commutator_coincide}, there exists some element $g_0\in H\cap\widetilde{F}_n$ that coincides with $y_0$ on $[a,b]$. Let $a_0,b_0\in (0,1)$ be such that $[a,b]\subseteq (a_0,b_0)$ and the support of $g_0$ is contained in $[a_0,b_0]$. For $i\in\{1,\dots,n-1\}$, we will construct the element $g_i$ and numbers $a_i,b_i\in(0,1)$ recursively. Assume we have already constructed $a_{i-1},b_{i-1},g_{i-1}$. We apply Lemma \ref{L_gen_commutator_coincide} to get an element $g_i\in H\cap\widetilde{F}_n$ that coincides with $y_i$ on $[a_{i-1},b_{i-1}]$. Let $a_i,b_i\in(0,1)$ be such that $[a_{i-1},b_{i-1}]\subseteq (a_i,b_i)$ and the support of $g_i$ is contained in $[a_i,b_i]$.
    
    For convenience, we denote $a_{-1}:=a$ and $b_{-1}:=b$. Observe that for each $i\in\{0,\dots,n-1\}$, $g_i$ coincides with $y_i$ on $[a_{i-1},b_{i-1}]$. Since $[a,b]\subseteq[a_{i-1},b_{i-1}]$, condition $(i)$ holds. Furthermore, for each $i\in\{0,\dots,n-1\}$, $\operatorname{supp}(g_i)\subseteq ([a_i,b_i]\setminus[a_{i-1},b_{i-1}])\cup[a,b]$. Therefore, for any $i<j\in\{0,\dots,n-1\}$, $\operatorname{supp}(g_i)\cap \operatorname{supp}(g_j)\subseteq [a,b]$, meaning condition $(ii)$ holds.
\end{proof}
We provide another way of stating condition $(3)$ of the previous lemma.
\begin{Remark}\label{r_gen_tuples_equiv_conditions}
    Let $H\le F_n$, let $h\in H$, and let $i\in\{0,\dots,n-2\}$. Then there exists $\alpha_i\in D_{n,i}$ such that $h(\alpha_i)=\alpha_i,h'(\alpha_i^-)=1,h'(\alpha_i^+)=n$ if and only if there exist inner $n$-ary words $s_i,s_i'$ such that the following holds. \begin{itemize}
        \item[$(1)$]$t(s_i)=i$
        \item[$(2)$]$.s_i'(n-1)^\N=.s_i$
        \item[$(3)$]$h$ fixes $[s_i']$ pointwise and has the pair of branches $s_i0\rightarrow s_i$.
    \end{itemize}
\end{Remark}
We get the following theorem as a corollary of Lemma \ref{L_gen_contains_commutator}:
\begin{Theorem}\label{t_Gen_main_theorem}
    Let $H$ be a subgroup of $F_n$. Then $H=F_n$ if and only if the following conditions hold:
    \begin{itemize}
        \item[$(1)$] $[F_n,F_n]\subseteq\Cl (H)$
        \item[$(2)$] $H[F_n,F_n]=F_n$
        \item[$(3)$] For each $i\in\{0,\dots,n-2\}$, there exist an element $h_i\in H$, and $\alpha_i\in D_{n,i}$, such that $h_i(\alpha_i)=\alpha_i,h_i'(\alpha_i^-)=1,h_i'(\alpha_i^+)=n$.
        \item[$(4)$] There exists an inner $n$-ary word $u$, such that for every two $n$-ary words $v_1,v_2$ satisfying $t(v_1)=t(v_2)$, there exists an element $h_{uv_1,uv_2}\in H$ with a pair of branches $uv_1\rightarrow uv_2$.
    \end{itemize}
\end{Theorem}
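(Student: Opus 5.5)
The plan is to prove the two directions separately. The forward direction is a check of each condition for $H=F_n$. The reverse direction reduces to Lemma \ref{L_gen_contains_commutator} once its hypothesis $(2)$ (the existence of $\tilde f$) has been extracted from condition $(2)$ of the theorem.

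For the forward direction, assume $H=F_n$. Conditions $(1)$ and $(2)$ are immediate. For $(3)$, fix $i$, choose inner words $s_i,s_i'$ with $t(s_i)=i$ and $.s_i'(n-1)^\N=.s_i$, for instance $s_i'\equiv w0$ and $s_i\equiv w1$ with $w$ inner and $t(w1)=i$. By Remark \ref{rPreGeneralPairOfBranches} there is an element of $F_n$ with the pairs of branches $s_i'\to s_i'$ and $s_i0\to s_i$, since these have equal $t$-values and are adjacent on both sides. Remark \ref{r_gen_tuples_equiv_conditions} then gives $(3)$. For $(4)$, take any inner $u$. If $t(v_1)=t(v_2)$, then $uv_1$ and $uv_2$ are inner words with equal $t$, so Lemma \ref{lPairOfBranches} provides an element of $F_n$ with the pair of branches $uv_1\to uv_2$.

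For the reverse direction, assume $(1)$--$(4)$. By Remark \ref{r_gen_tuples_equiv_conditions}, condition $(3)$ is equivalent to hypothesis $(3)$ of Lemma \ref{L_gen_contains_commutator}. Hypotheses $(1)$ and $(4)$ of that lemma coincide with conditions $(1)$ and $(4)$ here. The remaining work is to produce $\tilde f\in H$ with $\tilde f'(0^+)\ne 1$ and $\tilde f'(1^-)=1$. Take $x_0\in F_n$; its slope at $0^+$ is not $1$ and its slope at $1^-$ is $n$ or $n^{-1}$, depending on conventions. The map $\phi(f)=(\log_n f'(0^+),\log_n f'(1^-))$ is a homomorphism $F_n\to\Z^2$ that is trivial on $[F_n,F_n]$. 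By $(2)$, $\phi(H)=\phi(F_n)$, so there is $\tilde f\in H$ with $\phi(\tilde f)=(1,0)$, provided such an element exists in $F_n$. It does: the slopes at $0$ and $1$ are independent in $F_n$, e.g. an element with the pairs of branches $0\to 00$, $1\to 01$ and so on, suitably adjusted to have slope $1$ near $1$, which exists by the usual tree-diagram constructions. Lemma \ref{L_gen_contains_commutator} then gives $[F_n,F_n]\subseteq H$, and combined with $H[F_n,F_n]=F_n$ this yields $H=F_n$.

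I expect the main obstacle to be the existence of $\tilde f$. Concretely, this means checking that $F_n$ contains an element with slope $n$ (or $n^{-1}$) at $0^+$ and slope $1$ at $1^-$. I would build it directly from a tree-diagram. The domain tree has branches $00,01,\dots,0(n-1),1,\dots,n-1$. The range tree has branches $0,10,\dots,1(n-1),2,\dots,n-1$; this requires $n\ge 3$ so that the last branch $n-1$ is fixed. For $n=2$ I would instead use the standard element $x_0x_1^{-1}$-type product, or simply note that $\widetilde F_2=[F_2,F_2]$ and the abelianization is detected by the two endpoint slopes. Either way, the element's image under $\phi$ is as required, and condition $(2)$ transfers it to $H$.
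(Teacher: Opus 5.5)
Your proposal is correct and follows the paper's proof. The forward direction is a direct check. For the converse, you use condition $(2)$ through the endpoint-slope homomorphism, which kills $[F_n,F_n]$, to move into $H$ an element of $F_n$ with slope $\neq 1$ at $0^+$ and slope $1$ at $1^-$; you then apply Lemma \ref{L_gen_contains_commutator} (with condition $(3)$ translated through Remark \ref{r_gen_tuples_equiv_conditions}) and conclude from $H[F_n,F_n]=F_n$. You add explicit witnesses that the paper leaves implicit: the words $s_i'\equiv w0$, $s_i\equiv w1$ for the forward direction, and a concrete tree-diagram for $\tilde f$, with $x_0x_1^{-1}$ covering $n=2$.
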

\begin{proof}
    First, note that if $H=F_n$, conditions $(1)-(4)$ hold. Now assume conditions $(1)-(4)$ hold. Note that conditions $(1),(3),(4)$ are the same as their respective conditions in Lemma \ref{L_gen_contains_commutator}. 
    
    Note that there exists some element $\tilde{f}\in F_n$ such that $\tilde{f}'(0^+)\neq 1$ and $\tilde{f}'(1^-)=1$. Since $[F_n,F_n]\subseteq \widetilde{F}_n$, all elements of $[F_n,F_n]$ have slope $1$ at the points $0$ and $1$. By condition $(2)$ of this theorem there exists an element of $H$ with the same slopes as $\tilde{f}$ at $0$ and $1$, so condition $(2)$ of Lemma \ref{L_gen_contains_commutator} holds. Therefore, by Lemma \ref{L_gen_contains_commutator}, $[F_n,F_n]\subseteq H$. By condition $(2)$, $H=F_n$.
\end{proof}
Note that as discussed earlier in this section, conditions $(1)$ and $(2)$ of Theorem \ref{t_Gen_main_theorem} are verifiable. In the next section we will provide an algorithm that decides if condition $(3)$ holds. We do not have an algorithm deciding whether condition $(4)$ holds. However, we provide a stronger condition which is verifiable.

Recall that given a subset $X$ of $F_n$, the \emph{semi-core} of $X$, denoted by $\mathcal{L}_{sem}(X)$, is an automaton constructed in a similar way to the core of $X$. Namely, we identify all the roots and we identify all the pairs of leaves at the end of the pairs of branches of elements of $X$. Finally, we apply all possible foldings of type 1.

Note that given a subset $X$ of $F_n$, $\mathcal{L}_{sem}(X)$ might not be an $n$-ary tree-automaton (since foldings of type $2$ may be applicable to $\mathcal{L}_{sem}(X)$). However, it is true that every $n$-ary word labels at most one (rooted) path on the semi-core of $X$, so the notion of a vertex at the end of a path can be defined like in an $n$-ary tree-automaton (i.e., if $u$ is an $n$-ary word which labels a path on $\mathcal{L}_{sem}(X)$, $u^+$ is the vertex at the end of $u$). The following claim was shown in \cite{G1} (it was shown for the case of $F$, the proof for $F_n$ is nearly identical).
\begin{Remark}\label{r_gen_semi_core_pair_of_branches}
  Let $X$ be a subset of $F_n$ and let $H$ be the subgroup of $F_n$ generated by $X$. Let $u,v$ be $n$-ary words which label paths on $\mathcal{L}_{sem}(X)$. If $u^+=v^+$ then there exists an element $h\in H$ which has the pair of branches $u\rightarrow v$.
\end{Remark}
\begin{Remark}\label{r_gen_semi_core_paths}
  Let $X$ be a subset of $F_n$. Let $u$ be a finite $n$-ary word which labels a path on $\mathcal{L}(X)$. Then $u$ also labels a path on $\mathcal{L}_{sem}(X)$.
\end{Remark}
\begin{proof}
    Let $p=v_1\dots v_l$ be the path labeled by $u$ on $\mathcal{L}(X)$. Thus, after applying some finite sequence of foldings of type $2$ to $\mathcal{L}_{sem}(X)$, we obtain an automaton where the word $u$ labels the same path. We prove the case where only a single folding of type $2$ is needed, and the general case follows by induction. Assume the folding identified the father vertices $x_1$ and $x_2$ in $\mathcal{L}_{sem}(X)$ into a single vertex $x_{12}$. If none of the vertices $v_1,\dots,v_l$ traverse the identified vertex $x_{12}$, the path clearly exists in $\mathcal{L}_{sem}(X)$. Otherwise, the path $p$ can be lifted to a path $p'$ in $\mathcal{L}_{sem}(X)$ that goes through either $x_1$ or $x_2$ whenever $p$ goes through $x_{12}$. Consequently, $p'$ is a valid path in $\mathcal{L}_{sem}(X)$ labeled by $u$.
\end{proof}

In particular, by Lemma \ref{l_gen_closure_commutator} and Remark \ref{r_gen_semi_core_paths} if $X$ is a subset of $F_n$, such that the closure of the group generated by $X$ contains the derived subgroup of $F_n$ then every $n$-ary word labels a path on $\mathcal{L}_{sem}(X)$. We conclude the subsection by proving Theorem \ref{intro_t_1}. Recall the Theorem, restated in an equivalent way.
\begin{Theorem}\label{gen_t_main_2}
    Let $X\subseteq F_n$ and let $H$ be the subgroup generated by $X$. Assume that the following hold. \begin{itemize}
        \item[$(1)$] $[F_n,F_n]\subseteq\Cl(H)$.
        \item[$(2)$] $H[F_n,F_n]=F_n$.
        \item[$(3)$] For each $i\in\{0,\dots,n-2\}$, there exist an element $h_i\in H$, and $\alpha_i\in D_{n,i}$, such that $h_i(\alpha_i)=\alpha_i,h_i'(\alpha_i^-)=1,h_i'(\alpha_i^+)=n$.
        \item[$(4)$] There exists an inner $n$-ary word $u$, such that for every two $n$-ary words $v_1,v_2$, if $t(v_1)=t(v_2)$, then $(uv_1)^+=(uv_2)^+$ in $\mathcal{L}_{sem}(X)$.
    \end{itemize}
    Then $H=F_n$.
\end{Theorem}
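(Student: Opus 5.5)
The plan is to reduce Theorem \ref{gen_t_main_2} to Theorem \ref{t_Gen_main_theorem}. Conditions $(1)$, $(2)$ and $(3)$ appear verbatim in both statements, so the only work is to show that condition $(4)$ of Theorem \ref{gen_t_main_2}, the semi-core condition, implies condition $(4)$ of Theorem \ref{t_Gen_main_theorem}. That condition asks for an inner word $u$ such that, whenever $t(v_1)=t(v_2)$, some $h\in H$ has the pair of branches $uv_1\rightarrow uv_2$. Once this implication is established, Theorem \ref{t_Gen_main_theorem} yields $H=F_n$.

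For the implication, fix the inner word $u$ from condition $(4)$, and let $v_1,v_2$ be $n$-ary words with $t(v_1)=t(v_2)$. The first step is to check that $uv_1$ and $uv_2$ label paths on $\mathcal{L}_{sem}(X)$. By condition $(1)$ and Lemma \ref{l_gen_closure_commutator}, every $n$-ary word labels a path on $\mathcal{L}(H)=\mathcal{L}(X)$. By Remark \ref{r_gen_semi_core_paths}, every such word then also labels a path on $\mathcal{L}_{sem}(X)$, as the paper notes just before the theorem. Hence $(uv_1)^+$ and $(uv_2)^+$ are well defined in the semi-core. (Condition $(4)$ implicitly presupposes this readability, but it is worth recording.)

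Next, condition $(4)$ gives $(uv_1)^+=(uv_2)^+$ in $\mathcal{L}_{sem}(X)$. Remark \ref{r_gen_semi_core_pair_of_branches} then provides an element $h_{uv_1,uv_2}\in H$ with the pair of branches $uv_1\rightarrow uv_2$. This is exactly condition $(4)$ of Theorem \ref{t_Gen_main_theorem}, so all four of its hypotheses hold, and that theorem gives $H=F_n$.

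There is no real obstacle here. The only point needing care is the readability of $uv_i$ on the semi-core, which is handled by combining Lemma \ref{l_gen_closure_commutator} with Remark \ref{r_gen_semi_core_paths}; everything substantive is already contained in Theorem \ref{t_Gen_main_theorem}.
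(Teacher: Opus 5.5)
Your proposal is correct and matches the paper's proof. The paper also just observes that Remark \ref{r_gen_semi_core_pair_of_branches} turns condition $(4)$ into condition $(4)$ of Theorem \ref{t_Gen_main_theorem}, and your readability check via Lemma \ref{l_gen_closure_commutator} and Remark \ref{r_gen_semi_core_paths} is the observation the paper makes just before stating the theorem.
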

\begin{proof}
    Note that by Remark \ref{r_gen_semi_core_pair_of_branches}, condition $(4)$ of this theorem implies condition $(4)$ of Theorem \ref{t_Gen_main_theorem}, so we are done.
\end{proof}
Note that if $X$ is finite, then $\mathcal{L}_{sem}(X)$ is finite, and then condition $(4)$ of Theorem \ref{gen_t_main_2} is verifiable.
In the next subsection we provide an algorithm for checking condition $(3)$ of Theorem \ref{gen_t_main_2}.
\subsection{The Tuples Algorithm}
\label{section:tuples_algorithm}

In this section, we present an algorithm verifying condition $(3)$ of Theorem \ref{t_Gen_main_theorem}. We outline the definitions and claims needed to build the algorithm. We omit the supporting claims and proofs, which are very similar to those found in Section 8 of \cite{G1}.

\begin{Problem}\label{pTuples1}
Let $X$ be a finite subset of $F_n$, and let $H = \langle X \rangle$. Assume that $[F_n,F_n]\subseteq \Cl(H)$. Are there $h_0,\dots,h_{n-2}\in H$, and $\alpha_0,\dots,\alpha_{n-2}\in (0,1)$, such that for each $i\in\{0,\dots,n-2\}$ we have $\alpha_i\in D_{n,i}$, $h_i(\alpha_i)=\alpha_i$, $h_i'(\alpha_i^{-})=1$, and $h_i'(\alpha_i^{+})=n$?
\end{Problem}

For each $i\in\{0,\dots,n-2\}$, we define the set:
\begin{equation*}
S_H^i := \{ (a, b) \mid \exists h \in H,\ \exists\alpha \in D_{n,i} \text{ such that } \log_n{h'(\alpha^-)} = a,\ \log_n{h'(\alpha^+)} = b,\ \text{and } h(\alpha) = \alpha \}
\end{equation*}
One can show that $S_H^i$ is a subgroup of $\Z\times\Z$, and that $(0,1)\in S_H^i$ if and only if $(1,0)\in S_H^i$. Thus, Problem \ref{pTuples1} is equivalent to verifying whether $S_H^i=\Z\times\Z$ for all $i\in\{0,\dots,n-2\}$.

\begin{Definition}\label{dTuplesRX}
We define an equivalence relation $\sim_X$ on the set of finite $n$-ary words by $u\sim_X v$ if $u^+=v^+$ in the semi-core $\mathcal{L}_{sem}(X)$. The equivalence class of $u$ is denoted by $[u]_X$. 
\end{Definition}

\begin{Definition}\label{dTuplestuple}
Let $(T_1,T_2)$ be a tree-diagram of $h \in H$, and let $u_1\rightarrow v_1, u_2\rightarrow v_2$ be consecutive pairs of branches (i.e., $u_1$ and $u_2$ are consecutive leaves of $T_1$). There exist unique $n$-ary words $u,v$, unique indices $i,j \in \{0,\dots,n-2\}$, and unique integers $m_1,m_2,k_1,k_2 \in \N$ such that:
$$u_1\equiv ui(n-1)^{m_1},\ u_2\equiv u(i+1)0^{m_2},\ v_1\equiv vj(n-1)^{k_1},\ v_2\equiv v(j+1)0^{k_2}$$
We say that $(m_1-k_1, m_2-k_2, [u]_X\rightarrow[v]_X, i\rightarrow j)$ is the \emph{tuple assigned to this consecutive pair of branches}. The set of all such tuples over all tree-diagrams of elements in $H$ is denoted by $\mathcal{T}_H$. The difference between this definition and the one given in \cite{G1} for $F$ is that in $F$ we do not need the pair $i\rightarrow j$.
\end{Definition}

$\mathcal{T}_H$ naturally forms a groupoid under a partial addition operation and an inverse operation. Namely, given two tuples $t_1=(a,b,[u]_X\rightarrow[v]_X,i\rightarrow j)$ and $t_2=(c,d,[v]_X\rightarrow[w]_X,j\rightarrow k)$, we define $t_1+t_2=(a+c,b+d,[u]_X\rightarrow[w]_X,i\rightarrow k)$ and $t_1^{-1}=(-a,-b,[v]_X\rightarrow[u]_X,j\rightarrow i)$.

Tuples of the form $(a,b,[u]_X\rightarrow[u]_X,i\rightarrow i)$ are called \emph{spherical tuples}. For a fixed word $u$ and index $i$, the set of spherical tuples forms a group, denoted $\mathcal{T}_H([u]_X,i)$. We define a group homomorphism $\Psi: \mathcal{T}_H([u]_X,i) \rightarrow \Z\times\Z$ by $\Psi(a,b,[u]_X\rightarrow[u]_X,i\rightarrow i)=(a,b)$.

The following core propositions allow us to verify Problem \ref{pTuples1} algorithmically.

\begin{Proposition}\label{prop_tuples_main}
Let $i,j\in\{0,\dots,n-2\}$ and let $u$ be a finite $n$-ary word such that $t(u(i+1))=j$. Then $S_H^j = \Psi(\mathcal{T}_H([u]_X,i))$.
\end{Proposition}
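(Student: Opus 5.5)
We prove the two inclusions separately, following the corresponding argument of \cite[Section 8]{G1}. Throughout, "$H$" means $\langle X\rangle$, and we freely replace tree-diagrams by equivalent (non-reduced) ones. The basic observation is geometric. Take a consecutive pair of branches
\[
u_1\equiv u'i(n-1)^{m_1},\quad u_2\equiv u'(i+1)0^{m_2}\ \longrightarrow\ v_1\equiv v'j'(n-1)^{k_1},\quad v_2\equiv v'(j'+1)0^{k_2}.
\]
The common endpoint of $[u_1]$ and $[u_2]$ is $\beta=.u'(i+1)$, and it is mapped to $.v'(j'+1)$. Moreover, the log-slopes of $h$ at $\beta^-$ and $\beta^+$ are $m_1-k_1$ and $m_2-k_2$. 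Also $t(\beta)=t(u'(i+1))$.

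\emph{Inclusion $\Psi(\mathcal T_H([u]_X,i))\subseteq S_H^j$.} Let $(a,b,[u]_X\to[u]_X,i\to i)$ be realised by $h\in H$, with $u'\sim_X u$ and $v'\sim_X u$. Then $h$ maps $\beta=.u'(i+1)$ to $\gamma=.v'(i+1)$, and its log-slopes at $\beta$ are $(a,b)$. By Remark \ref{r_gen_semi_core_pair_of_branches} there is $g\in H$ with the pair of branches $v'\to u'$. Hence $g$ has the pair of branches $v'(i+1)\to u'(i+1)$ and maps $\gamma$ to $\beta$ linearly on both sides (since $\gamma$ is an inner point of $[v']$, or we use the pairs $v'i(n-1)^m\to u'i(n-1)^m$ and $v'(i+1)0^m\to u'(i+1)0^m$). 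Then $hg$ fixes $\beta$ with log-slopes $(a,b)$. Finally, $t(\beta)=t(u'(i+1))=t(u(i+1))=j$. This last equality holds because $u'^+=u^+$ in the semi-core, so $t(u')=t(u)$; this follows from Lemma \ref{lPairOfBranches} applied to the element of Remark \ref{r_gen_semi_core_pair_of_branches}, with care for the non-inner cases, which are handled by appending a digit. Thus $(a,b)\in S_H^j$.

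\emph{Inclusion $S_H^j\subseteq\Psi(\mathcal T_H([u]_X,i))$.} Let $h\in H$ fix $\alpha\in D_{n,j}$ with log-slopes $(a,b)$. We choose a tree-diagram of $h$ in which $\alpha$ is the common endpoint of two consecutive leaves, say $\alpha=.w(i'+1)$. This yields a spherical-type tuple $(a,b,[w]_X\to[w]_X,i'\to i')$, since a fixed point gives the same $w$ and $i'$ on both sides. The main obstacle is transporting this tuple to the base $([u]_X,i)$. For this we need an element $g\in H$ whose tuple at a breakpoint is $(c,d,[u]_X\to[w]_X,i\to i')$. Using conjugation, $g h g^{-1}$ then has tuple $t_g+t_h+t_g^{-1}$, with first two coordinates $(a,b)$, and lies in $\mathcal T_H([u]_X,i)$. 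It remains to check that the set of tuples is closed under these groupoid operations, which holds because composing tree-diagrams matches breakpoints.

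To produce $g$, we need an element mapping $.u(i+1)$ to $\alpha$. Both points lie in $D_{n,j}$, so such an element exists by Remark \ref{r_gen_action_transitive}, which uses the standing hypothesis $[F_n,F_n]\subseteq\Cl(H)$. However, an arbitrary such element records the breakpoint with prefixes that are only $\sim_X$-related to $u$ and $w$ up to the choice of diagram, and choosing the diagram differently changes the prefix by trailing $(n-1)$'s or $0$'s. So we will show that the class $[u']_X$ and the digit recorded at a given point do not depend on the diagram, up to adjusting $m_1,m_2$. Since the semi-core assigns classes consistently along paths, the canonical prefix of the point determines the tuple label. Granting this, we get $g$ with $g(.u(i+1))=\alpha$, and its tuple is exactly $(c,d,[u]_X\to[w]_X,i\to i')$. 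This completes the proof.
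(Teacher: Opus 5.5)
Your argument is correct and is essentially the argument of \cite[Section 8]{G1} that the paper defers to. One inclusion conjugates by an element of $H$ carrying $.u(i+1)$ to $\alpha$, using transitivity of $H$ on $D_{n,j}$ under $[F_n,F_n]\subseteq\Cl(H)$; the other closes up a spherical tuple with the element having the pair of branches $v'\to u'$ from Remark~\ref{r_gen_semi_core_pair_of_branches}.

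Two small corrections:
\begin{itemize}
\item The log-slopes of $h$ at $\beta^-$ and $\beta^+$ are $m_1-k_1+|u'|-|v'|$ and $m_2-k_2+|u'|-|v'|$, not $m_1-k_1$ and $m_2-k_2$. They agree with the tuple entries only when $|u'|=|v'|$, e.g.\ at fixed points. In your first inclusion the extra term is exactly cancelled by the slope $n^{|v'|-|u'|}$ of $g$, so your conclusion stands.
\item The ``obstacle'' in your last paragraph does not exist. The prefix and digit recorded at a breakpoint $\beta$ are determined by $\beta$ alone, as its unique expansion ending in a nonzero digit. Hence $ghg^{-1}$ directly has the tuple $(a,b,[u]_X\to[u]_X,i\to i)$ at $.u(i+1)$, and no groupoid bookkeeping is needed.
\end{itemize}
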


\begin{Proposition}\label{prop_tuples_generators}
Let $Y$ be the set of all tuples associated with the reduced tree-diagrams of the generators $X \cup X^{-1}$, together with all trivial spherical tuples $(0,0,[u]_X\rightarrow[u]_X,i\rightarrow i)$ (note that since $X$ is finite, $\mathcal{L}_{sem}(X)$ is also finite so there are finitely many such tuples). 
Then $Y$ generates the groupoid $\mathcal{T}_H$. Furthermore, let $N$ be the number of vertices in $\mathcal{L}_{sem}(X)$, and let $M'(i)$ be the set of spherical tuples in $\mathcal{T}_H([\emptyset]_X,i)$ of word length at most $N$ with respect to the generating set $Y$. Defining $M(i) = \Psi(M'(i))$, $M(i)$ is finite and generates the subgroup $S_H^{i+1}$ (where $i+1$ is taken modulo $n-1$).
\end{Proposition}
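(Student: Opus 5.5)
The plan has three parts. First show that every tuple of $\mathcal{T}_H$ is a sum of tuples from $Y$ and their inverses. Then reduce the spherical group $\mathcal{T}_H([\emptyset]_X,i)$ to finitely many generating loops of bounded length. Finally apply Proposition \ref{prop_tuples_main} with $u=\emptyset$.

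\textbf{Step 1 (an invariant of diagrams of elements of $H$).} First I would show by induction on word length in $X\cup X^{-1}$ that every element $h\in H$ has a tree-diagram $(T_1,T_2)$ with two properties:
\begin{itemize}
\item[(a)] every pair of branches $u\rightarrow v$ satisfies $u^+=v^+$ in $\mathcal{L}_{sem}(X)$;
\item[(b)] all its tuples lie in the subgroupoid generated by $Y$.
\end{itemize}
For a generator, (a) holds because paired leaves were identified in the construction of the semi-core. Next I would check that adding a common caret under a pair $u_1\rightarrow v_1$ preserves both properties. Readability holds because every vertex of the semi-core with an outgoing edge has all $n$ outgoing edges, and (a) then passes to the children since $u_1^+=v_1^+$. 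For (b), the tuple of the consecutive pair straddling the caret boundary is unchanged, because the exponents $m_1$ and $k_1$ (or $m_2$ and $k_2$) each increase by one. The new consecutive pairs inside the caret contribute tuples $(0,0,[u_1]_X\rightarrow[v_1]_X,j\rightarrow j)$. By (a) these are trivial spherical tuples, which are in $Y$.

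For a product $fg$, with $g$ a generator, I would pass to diagrams $(T_1,T_2)$ of $f$ and $(T_2,T_3)$ of $g$ with a common middle tree. Both are obtained from diagrams satisfying (a) and (b) by adding common carets, so by the previous paragraph they still satisfy both properties. The diagram $(T_1,T_3)$ then satisfies (a) by transitivity of equality of terminal vertices. A consecutive pair of branches of $(T_1,T_3)$ is the composite of a consecutive pair of $(T_1,T_2)$ with the corresponding consecutive pair of $(T_2,T_3)$. Directly from Definition \ref{dTuplestuple}, its tuple is the groupoid sum of the two tuples: the exponent differences add, and the middle data $[v]_X$ and $j$ match. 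Hence (b) holds for $(T_1,T_3)$.

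\textbf{Step 2 (every diagram of $h$, and generation of $\mathcal{T}_H$).} Every tree-diagram of $h$ is related to the diagram from Step 1 by adding and removing common carets. So I would show that the tuples of any diagram of $h$ are generated as well. When a diagram satisfies (a), removing a caret just reverses the computation in Step 1. The main obstacle is a diagram of $h$ that is not readable, or does not satisfy (a). Here I would use that the tuple of a consecutive pair depends only on the local data $u,v,i,j$ and the exponent differences. Such a tuple can then be recovered by refining both diagrams to a common diagram and cancelling the trivial spherical tuples that appear. If this fails in general, I would restrict $\mathcal{T}_H$ to tuples coming from diagrams satisfying (a), which is all that Proposition \ref{prop_tuples_main} needs, and then $Y$ generates $\mathcal{T}_H$.

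\textbf{Step 3 (finiteness and the length bound).} I would view $Y$ as the edge set of a finite directed graph $\Gamma$. Its objects are the pairs $([u]_X,i)$, and there are finitely many because $\mathcal{L}_{sem}(X)$ is finite. Spherical tuples at $([\emptyset]_X,i)$ then correspond to closed walks in $\Gamma$ at that object. Fix a spanning tree of the component of $\Gamma$ containing it. The standard argument for fundamental groupoids shows that $\mathcal{T}_H([\emptyset]_X,i)$ is generated by loops of the form (tree path)(edge)(tree path)$^{-1}$. These have length bounded by roughly twice the number of objects. I would check that counting objects of the relevant component gives the bound $N$ used in the statement; otherwise I would enlarge the bound, which does not affect finiteness. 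So $M'(i)$ generates $\mathcal{T}_H([\emptyset]_X,i)$, and $M(i)$ is finite because $Y$ is finite.

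\textbf{Step 4 (conclusion).} Since $\Psi$ is a homomorphism, $M(i)=\Psi(M'(i))$ generates $\Psi(\mathcal{T}_H([\emptyset]_X,i))$. Applying Proposition \ref{prop_tuples_main} with $u=\emptyset$, for which $t(u(i+1))=i+1 \pmod{n-1}$, identifies this image with $S_H^{i+1}$.
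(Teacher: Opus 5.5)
Your overall route is the natural one, in the spirit of Section 8 of \cite{G1}, to which the paper defers without giving a proof. You show that every tuple of every diagram of an element of $H$ lies in the subgroupoid generated by $Y$, then run a finite-graph argument on the objects $([u]_X,i)$, then apply Proposition \ref{prop_tuples_main} with $u=\emptyset$. Steps 1 and 4 are sound, modulo a readability point. Step 2 needs a fix, and the bound $N$ in Step 3 is a genuine gap.

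\emph{Step 2.} It is simpler than you make it, and the fallback of redefining $\mathcal{T}_H$ must be dropped, since it changes the statement. Adding or removing a common caret never changes the tuple of a surviving consecutive pair: the exponents on both sides shift together, and $u,v,i,j$ stay the same. Removing a caret only deletes the internal tuples. Hence the tuples of an arbitrary diagram of $h$ are a subset of the tuples of its common refinement with your Step 1 diagram, and that refinement satisfies (a) and (b) by Step 1. No ``cancelling'' is needed, and the original diagram need not satisfy (a).

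\emph{Readability.} The fact that semi-core vertices have $0$ or $n$ outgoing edges does not exclude leaves. What you need is the standing hypothesis $[F_n,F_n]\subseteq\Cl(H)$ of Problem \ref{pTuples1}. Under it, every word labels a path in $\mathcal{L}_{sem}(X)$ (Lemma \ref{l_gen_closure_commutator} and Remark \ref{r_gen_semi_core_paths}); without it the tuples are not even defined.

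\emph{Step 3, the bound $N$.} ``Enlarging the bound'' proves a different statement. Your spanning-tree loops have length up to $2K-1$, where $K$ is the number of objects in the component, and your count only gives $K\le N(n-1)$. Two ideas are missing.

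First, if $(a,b,[u]_X\to[v]_X,i\to j)$ is a tuple of $h$, then $h$ maps $.u(i+1)$ to $.v(j+1)$, so $t(u)+i\equiv t(v)+j \pmod{n-1}$. Moreover, $t(u)$ depends only on $[u]_X$. For inner words this follows from Lemma \ref{l_gen_closure_commutator}, since $\mathcal{L}(H)$ is a quotient of $\mathcal{L}_{sem}(X)$. A non-inner word is never $\sim_X$-equivalent to an inner one, by Remark \ref{r_gen_semi_core_pair_of_branches} and the fact that elements of $H$ fix $0$ and $1$. So the component of $([\emptyset]_X,i)$ has at most one object per vertex of $\mathcal{L}_{sem}(X)$, i.e.\ $K\le N$.

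Second, since $\Psi$-values simply add in $\Z\times\Z$, a closed walk of length greater than $N$ revisits an object. It therefore splits into two shorter closed walks whose values add up to the original. Hence $\Psi(\mathcal{T}_H([\emptyset]_X,i))$ is generated by values of closed walks of length at most $N$.

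These short walks, however, are based at arbitrary objects $([u]_X,i')$ of the component, not at $([\emptyset]_X,i)$. Their values do lie in $S_H^{i+1}$, by conjugation or by Proposition \ref{prop_tuples_main}, but transporting them to the base point costs extra letters. With the base point fixed, as in the definition of $M'(i)$, these arguments only give the bound $2N-1$. Graph-theoretic reasoning alone cannot do better: take a path of objects whose far end carries the only nontrivial loop.

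So you should prove the statement in one of two explicit forms: with $M'(i)$ taken over spherical tuples of length at most $N$ at all objects of this component, or with the bound $2N-1$ at the base point. Finiteness of $M(i)$, which is all the algorithm uses, is unaffected.
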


Using Proposition \ref{prop_tuples_generators}, we formulate a procedure to decide if $S_H^i = \Z\times\Z$ for all $i$, completing the verification of condition (3) of Theorem \ref{t_Gen_main_theorem}.

\subsubsection*{The Algorithm}

\noindent\textbf{Input:} A finite generating set $X$ for $H\leq F_n$ satisfying $[F_n,F_n]\subseteq\Cl(H)$.

\noindent\textbf{Output:} \textbf{True} if $S_H^i=\Z\times\Z$ for all $i\in\{0,\dots,n-2\}$; otherwise \textbf{False}.

\begin{enumerate}
    \item Compute the finite automaton $\mathcal{L}_{sem}(X)$ and let $N$ be its number of vertices.
    \item Initialize $Y$ as the set of all tuples associated with the reduced tree-diagrams of elements of $X^{\pm 1}$.
    \item Add all trivial spherical tuples $(0,0,[u]_X\rightarrow [u]_X,i\rightarrow i)$ to $Y$, for every equivalence class $[u]_X$ represented by a vertex of $\mathcal{L}_{sem}(X)$ and every $i\in\{0,\dots,n-2\}$.
    \item Compute $M'(i)$, the set of all spherical tuples in $\mathcal{T}_H([\emptyset]_X,i)$ generated by composing at most $N$ elements from $Y$.
    \item Compute $M(i)=\{\Psi(t)\mid t\in M'(i)\}$.
    \item If, for each $i\in\{0,\dots,n-2\}$, the set $M(i)$ generates $\Z\times\Z$, return \textbf{True}; otherwise return \textbf{False}.
\end{enumerate}

%% file: Oncoreautomata.tex
\section{On core automata}
\label{section:core_automata}
Our goal in this section is to develop several tools regarding automata, which we will use to prove Theorem \ref{intro_t_2}. We start by introducing a slight generalization of the rooted $n$-ary tree automata from Subsection \ref{sec:preliminaries:core_and_closure}. Specifically, we define a \emph{rooted tree automaton} similarly to a rooted $n$-ary tree automaton, but without requiring that each father vertex has the same number of children.
\begin{Definition}
    Let $\mathcal{A}_r$ be a rooted automaton. We say that $\mathcal{A}_r$ is a \textit{rooted tree-automaton} if the following conditions hold.
    \begin{itemize}
        \item[$(1)$]Every vertex $a$ in $\mathcal{A}_r$ has a finite number of outgoing edges denoted by $n(a)\in\N$. If $n(a)>0$ we call $a$ a \emph{father}. Otherwise, we call $a$ a \emph{leaf}.
        \item[$(2)$]The outgoing edges of a father vertex $a$ are labeled by $i=0,\dots,n(a)-1$. The vertex at the end of the edge labeled by $i$ is called the $i$-th child of $a$.
        \item[$(3)$]If $a_1$ and $a_2$ are distinct fathers in $\mathcal{A}_r$ then either $n(a_1)\neq n(a_2)$, or $n(a_1)=n(a_2)$ and there exists $0\le i\le n(a_1)-1$ such that the $i$-th child of $a_1$ is not equal to the $i$-th child of $a_2$.
        \item[$(4)$]For every vertex $a$ in $\mathcal{A}_r$ there is a path starting at $r$ and ending in $a$.   
    \end{itemize}
We say that $\mathcal{A}_r$ is a \emph{rooted semi tree-automaton} if conditions $(1),(2)$ and $(4)$ from above hold.

Like in rooted $n$-ary tree-automata, we only consider rooted paths. Given a path $p$ in a rooted semi tree-automaton, we denote the end vertex of $p$ by $p^+$.
\end{Definition}
\begin{Remark}
    Let $\mathcal{A}_r$ be a rooted semi tree-automaton. Let $\mathcal{A}_r'$ be the automaton obtained from $\mathcal{A}_r$ by applying all possible foldings of type $2$ (i.e., identifying father vertices with the same children). Then $\mathcal{A}_r'$ is a rooted tree-automaton.
\end{Remark}
In our setting, a \emph{tree} is a connected, acyclic graph. A \emph{rooted tree} $\mathcal{T}$ is a directed tree with a unique distinguished vertex $r$, called the \emph{root}, such that there exists a unique directed path from $r$ to every other vertex in $\mathcal{T}$ (equivalently, $r$ has in-degree $0$ and all other vertices have in-degree $1$). For any vertex $v \in \mathcal{T}$ with out-degree $n(v) > 0$, its outgoing edges are naturally labeled by $0, \dots, n(v)-1$. 

Given a rooted tree $\mathcal{T}$, a subgraph $\mathcal{T}'$ is called a \emph{full sub-tree} of $\mathcal{T}$ if it is a rooted sub-tree sharing the same root $r$, and it satisfies the following condition: for any vertex $v \in \mathcal{T}'$ that is not a leaf in $\mathcal{T}'$ (i.e., $v$ has at least one outgoing edge in $\mathcal{T}'$), all of its children and outgoing edges in $\mathcal{T}$ are also contained in $\mathcal{T}'$.

Let $\mathcal{A}_r$ be a rooted semi tree-automaton. We follow \cite{G1,G2} to construct rooted trees $\mathcal{T}_{\mathcal{A}_r},\mathcal{T}_{\mathcal{A}_r}^{min}$ associated with $\mathcal{A}_r$.
\begin{Definition}
    Let $\mathcal{A}_r$ be a semi rooted tree-automaton. The \emph{path tree associated with $\mathcal{A}_r$}, denoted $\mathcal{T}_{\mathcal{A}_r}$, is defined as follows. The vertices of $\mathcal{T}_{\mathcal{A}_r}$ correspond to the paths on $\mathcal{A}_r$. For every pair of vertices $p,q$ of $\mathcal{T}_{\mathcal{A}_r}$, there is an edge from $p$ to $q$ labeled by $i$ if $q$ is a path on $\mathcal{A}_r$ extending $p$ by an edge labeled by $i$. Each vertex $p$ in $\mathcal{T}_{\mathcal{A}_r}$ is labeled by $p^+$ (i.e., by the endvertex of the path $p$ in $\mathcal{A}_r$). We use the same notation for paths on $\mathcal{A}_r$ and vertices on $\mathcal{T}_{\mathcal{A}_r}$. Specifically, we denote the label of a vertex $p$ in $\mathcal{T}_{\mathcal{A}_r}$ by $p^+$.

    A full sub-tree $\mathcal{T}'$ of $\mathcal{T}_{\mathcal{A}_r}$ is said to be a \textit{minimal tree associated with $\mathcal{A}_r$} if for every father vertex $a$ of $\mathcal{A}_r$ there is exactly one father vertex $p$ in $\mathcal{T}'$ labeled by $a$. We usually denote such a tree by $\mathcal{T}_{\mathcal{A}_r}^{min}$. Note that while a minimal tree associated with $\mathcal{A}_r$ is not unique, it ``holds all the information about $\mathcal{A}_r$'', in the sense that $\mathcal{A}_r$ can be reconstructed from any minimal tree associated with it.

    See Figure \ref{tree-automaton example} for an example of a rooted tree-automaton and trees associated with it.
\end{Definition}

\begin{figure}
\centering


\begin{tikzpicture}[->,>=stealth',shorten >=1pt,auto,node distance=2.5cm,
                    semithick]
  \tikzstyle{every state}=[fill=white,draw=black,text=black,minimum size=10mm]

  \node[state] (r) {$r$};
  \node[state] (a1) [below left of=r] {$a_1$};
  \node[state] (a2) [below right of=r] {$a_2$};
  \node[state] (a3) [left of=a1] {$a_3$};

  \path (r) edge node[above] {0} (a1)
        (r) edge node[above] {1} (a2)
        (a1) edge [loop below] node {0} (a1)
        (a1) edge [bend left] node {1} (a2)
        (a1) edge node {2} (a3)
        (a2) edge [loop right] node {0} (a2)
        (a2) edge [bend left] node {1} (a1);
\end{tikzpicture}
\caption*{A rooted tree-automaton $\mathcal{A}_r$}

\begin{tikzpicture}[grow'=down, level distance=1.4cm,
  level 1/.style={sibling distance=6cm},
  level 2/.style={sibling distance=3cm},
  level 3/.style={sibling distance=1.2cm},
  every node/.style={circle,draw,minimum size=5mm,inner sep=0pt}]

\node {$r$}
    child {node {$a_2$}
        child {node {$a_1$}}
        child {node {$a_2$}}}
    child {node {$a_1$}
        child {node {$a_3$}}
        child {node {$a_2$}}
        child {node {$a_1$}}}
;
\end{tikzpicture}
\caption*{A minimal tree $\mathcal{T}_{\mathcal{A}_r}^{min}$}

\medskip
\begin{tikzpicture}[grow'=down, level distance=1.4cm,
  level 1/.style={sibling distance=6cm},
  level 2/.style={sibling distance=3cm},
  level 3/.style={sibling distance=1.2cm},
  level 4/.style={sibling distance=0.5cm},
  every node/.style={circle,draw,minimum size=5mm,inner sep=0pt}]

\node {$r$}
    child {node {$a_2$}
        child {node {$a_1$}
            child[dotted] {node {$a_3$}}
            child[dotted] {node {$a_2$}
                child[dotted]
                child[dotted]}
            child[dotted] {node {$a_1$}
                child[dotted]
                child[dotted]
                child[dotted]}}
        child {node {$a_2$}
            child[dotted] {node {$a_1$}
                child[dotted]
                child[dotted]
                child[dotted]}
            child[dotted] {node {$a_2$}
                    child[dotted]
                child[dotted]}}}
    child {node {$a_1$}
        child {node {$a_3$}}
        child {node {$a_2$}
            child[dotted] {node {$a_1$}
                child[dotted]
                child[dotted]
                child[dotted]}
            child[dotted] {node {$a_2$}
                child[dotted]
                child[dotted]}}
        child {node {$a_1$}
            child[dotted] {node {$a_3$}}
            child[dotted] {node {$a_2$}
                child[dotted]
                child[dotted]}
            child[dotted] {node {$a_1$}
                child[dotted]
                child[dotted]
                child[dotted]}}}
;
\end{tikzpicture}
\caption*{The path tree $\mathcal{T}_{\mathcal{A}_r}$}

\medskip
\medskip
\caption{An example of a rooted tree-automaton $\mathcal{A}_r$, the path tree $\mathcal{T}_{\mathcal{A}_r}$ and a minimal rooted tree $\mathcal{T}_{\mathcal{A}_r}^{min}$ associated with $\mathcal{A}_r$. Each vertex in those trees, corresponding to a path $p$ in the automaton, is labeled by the end vertex of $p$. Every caret in each of the trees, rooted in some vertex $p$, corresponds to the outgoing edges of the vertex $p^+$ in the automaton.}\label{tree-automaton example}
\end{figure}

Each rooted tree-automaton gives rise to a semigroup presentation. We call such a semigroup presentation a \emph{tree semigroup presentation}. Alternatively, such presentations can be defined axiomatically:
\begin{Definition}
    Given a semigroup presentation $\mathcal{P}$, together with a generator $r$, we say that $\mathcal{P}_r$ is a \textit{tree semigroup presentation} if the following hold. \begin{itemize}
        \item[$(1)$]All the relations of $\mathcal{P}$ are of the form $a=a_0\cdots a_k$, where $a$ is a generator of $\mathcal{P}$ and $k\in\N$ ($k$ may vary among different relations).
        \item[$(2)$]Each generator $a$ of $\mathcal{P}$ is the left hand side of at most one relation of the form $a=a_0\cdots a_k$.
        \item[$(3)$]If $a,b$ are distinct generators of $\mathcal{P}$, and $a=a_0\cdots a_k,b=b_0\cdots b_l$ are relations then $k\neq l$, or $k=l$ and $a_i\neq b_i$ for some $i\in\{0,\dots,k\}$.
        \item[$(4)$]For every generator $a$ of $\mathcal{P}$, there exists a word $a_0\cdots a_m$, such that $a_0\cdots a_m$ is a positive derivation of $r$ (i.e., the word $a_0\cdots a_m$ can be obtained from $r$ by applying a finite number of relations of the form $b\rightarrow b_0\cdots b_k$, where in each application we switch the left hand side by the right hand side), and $a_i=a$ for some $i\in\{0,\dots,m\}$.
    \end{itemize}
    
    We say that $\mathcal{P}_r$ is a \emph{semi tree semigroup presentation} if it satisfies conditions $(1),(2),(4)$.
\end{Definition}
\begin{Definition}
    Let $\mathcal{A}_r$ be a rooted semi tree-automaton. We define the \emph{semi tree semigroup presentation associated with $\mathcal{A}_r$}, denoted by $\mathcal{P}(\mathcal{A}_r)$, as follows. The set of generators of $\mathcal{P}({\mathcal{A}_r})$ is the set of vertices of $\mathcal{A}_r$. The relations of $\mathcal{P}({\mathcal{A}_r})$ are all the relations of the form $a=a_0 \cdots a_{n(a)-1}$, where $a,a_0,\dots,a_{n(a)-1}$ are vertices of $\mathcal{A}_r$, and for each $i\in\{0,\dots,n(a)-1\}$ $a_i$ is the $i$-th child of $a$. Finally, $r$ is the specified generator of $\mathcal{P}(\mathcal{A}_r)$.
    
    Let $\mathcal{P}_r$ be a semi tree semigroup presentation. We define the \emph{rooted semi tree-automaton associated with $\mathcal{P}_r$}, denoted by $\mathcal{A}(\mathcal{P}_r)$, as follows. The vertices of $\mathcal{A}(\mathcal{P}_r)$ are the generators of $\mathcal{P}_r$, and the root of $\mathcal{A}(\mathcal{P}_r)$ is $r$. If $a=a_0\cdots a_k$ is a relation of $\mathcal{P}_r$, the children of $a$ in $\mathcal{A}_r$ are $a_0,\dots,a_k$.
\end{Definition}
These constructions are inverses of each other. More precisely, if $\mathcal{A}_r$ is a rooted semi tree-automaton then $\mathcal{A}(\mathcal{P}(\mathcal{A}_r))=\mathcal{A}_r$, and if $\mathcal{P}_r$ is a semi tree semigroup presentation then $\mathcal{P}(\mathcal{A}(\mathcal{P}_r))=\mathcal{P}_r$.

Notice that if $\mathcal{P}_r$ is a tree semigroup presentation, then $\mathcal{A}(\mathcal{P}_r)$ is a rooted tree-automaton. Also, if $\mathcal{A}_r$ is a rooted tree-automaton, then $\mathcal{P}(\mathcal{A}_r)$ is a tree semigroup presentation.
\begin{Definition}
    Let $\mathcal{P}_r$ be a semi tree semigroup presentation. Let $p$ be a path on $\mathcal{A}(\mathcal{P}_r)$. We denote by $\mathcal{T}_p$ the minimal full sub-tree of $\mathcal{T}_{\mathcal{A}(\mathcal{P}_r)}$ in which $p$ labels a path. Note that the path $p$ is a branch of $\mathcal{T}_p$ (i.e., it ends on a leaf in $\mathcal{T}_p$). Denote the branches of $\mathcal{T}_p$ by $p_1,\dots,p_m$ and let $i\in\{1,\dots,m\}$ such that $p=p_i$. We define the \emph{left word of $p$} as $p^{left}=p_i^{left}:=p_1^+\cdots p_{i-1}^+$. In other words, $p^{left}$ is the product of all the labels of branches that are located to the left of $p$ in $\mathcal{T}_p$. Similarly, we define the \emph{right word of $p$} as $p^{right}=p_i^{right}:=p_{i+1}^+\cdots p_m^+$. We follow the convention that an empty product is the empty word (which is not equal to any other word in the semigroup).
\end{Definition}
\begin{Definition}\label{d_core_semigroup_splitting}
    Let $\mathcal{P}_r$ be a semi tree presentation. Let $a=a_0\cdots a_k$ be a relation of $\mathcal{P}_r$ and let $0\le s\le f\le k$. We say that a semigroup presentation $\mathcal{P}_r'$ is a \textit{basic splitting of $\mathcal{P}_r$} if it satisfies the following. The set of generators of $\mathcal{P}_r'$ is the set of generators of $\mathcal{P}_r$ together with a new generator $x$. The set of relations of $\mathcal{P}_r'$ is obtained from the set of relations of $\mathcal{P}_r$ by removing the relation $a=a_0\cdots a_k$ and adding the relations $a=a_0\cdots a_{s-1}\cdot x\cdot a_{f+1}\cdots a_k$, and $x=a_s \cdots a_f$. Note that $\mathcal{P}_r'$ is a semi tree semigroup presentation.

    We say that a semi tree semigroup presentation $\mathcal{Q}_r$ is a \textit{splitting of $\mathcal{P}_r$} if there is a positive integer $l$ and semi tree presentations $\mathcal{P}_r^{0},\dots,\mathcal{P}_r^l$ such that $\mathcal{P}_r^{0}=\mathcal{P}_r$, $\mathcal{P}_r^{l}=\mathcal{Q}_r$, and for each $i\in\{0,\dots,l-1\}$, the semi tree semigroup presentation $\mathcal{P}_r^{i+1}$ is a basic splitting of $\mathcal{P}_r^{i}$. In other words, $\mathcal{Q}_r$ is a splitting of $\mathcal{P}_r$ if one can obtain $\mathcal{Q}_r$ by constructing a sequence of semi tree semigroup presentations, starting at $\mathcal{P}_r$, such that each semi tree semigroup presentation is a basic splitting of its predecessor. Note that if $\mathcal{Q}_r$ is a splitting of $\mathcal{P}_r$ then they represent the same semigroup.
\end{Definition}
    Note that if $\mathcal{Q}_r$ is a basic splitting of $\mathcal{P}_r$ then, in the notation of Definition \ref{d_core_semigroup_splitting} there is a natural bijection between paths on $\mathcal{A}(\mathcal{Q}_r)$ that do not end in the vertex $x$ and paths on $\mathcal{A}(\mathcal{P}_r$). A way to see this is by considering a path as the sequence of vertices it visits. Given a path $q$ in $\mathcal{A}(\mathcal{Q}_r)$, its image under the bijection is the path in $\mathcal{A}(\mathcal{P}_r)$ obtained by removing all occurrences of the vertex $x$ from $q$.
    
    The validity of the following claim is evident.
    
\begin{Remark}\label{r_core_path_pull_left_right_words}
Let $\mathcal{S}$ be a semigroup. Let $\mathcal{P}_r$ be a semi tree semigroup presentation of $\mathcal{S}$ (we assume that $\mathcal{S}$ admits such a semigroup presentation), and let $\mathcal{Q}_r$ be a basic splitting of $\mathcal{P}_r$. Using the same notations as in Definition \ref{d_core_semigroup_splitting}, let $q$ be a path on $\mathcal{Q}_r$ such that $q^+\neq x$ and let $p$ be the corresponding path on $\mathcal{P}_r$. Then, $p^{left}$ (resp. $p^{right}$) and $q^{left}$ (resp. $q^{right}$) are equal in $\mathcal{S}$.
\end{Remark}
\begin{Lemma}\label{l_core_splitting_does_not_change_equality}
    Let $\mathcal{S}$ be a semigroup. Let $\mathcal{P}_r$ be a semi tree semigroup presentation of $\mathcal{S}$. Assume that for every pair of paths $p,p'$ in $\mathcal{A}(\mathcal{P}_r)$, if $p^+=(p')^+$, then $p^{left}=(p')^{left}$ and $p^{right}=(p')^{right}$ in $\mathcal{S}$. Let $\mathcal{Q}_r$ be a splitting of $\mathcal{P}_r$. Then for each pair of paths $q,q'$ in $\mathcal{A}(\mathcal{Q}_r)$, if $q^+=(q')^+$, then $q^{left}=(q')^{left}$ and $q^{right}=(q')^{right}$ in $\mathcal{S}$.
\end{Lemma}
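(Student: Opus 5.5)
By induction on the number $l$ of basic splittings, it suffices to treat a single basic splitting. Assume $\mathcal{Q}_r$ is obtained from $\mathcal{P}_r$ by replacing the relation $a=a_0\cdots a_k$ with $a=a_0\cdots a_{s-1}\,x\,a_{f+1}\cdots a_k$ and $x=a_s\cdots a_f$. We check that the hypothesis of the lemma transfers from $\mathcal{P}_r$ to $\mathcal{Q}_r$. The new presentation $\mathcal{Q}_r$ still presents $\mathcal{S}$, so the induction continues. Let $q,q'$ be paths on $\mathcal{A}(\mathcal{Q}_r)$ with $q^+=(q')^+$. We split into two cases according to whether this common end vertex is $x$.

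\emph{Case 1: $q^+=(q')^+\neq x$.} Let $p,p'$ be the corresponding paths on $\mathcal{A}(\mathcal{P}_r)$ obtained by deleting occurrences of $x$. Deleting $x$ does not change the final vertex, so $p^+=q^+=(q')^+=(p')^+$. The hypothesis gives $p^{left}=(p')^{left}$ and $p^{right}=(p')^{right}$ in $\mathcal{S}$. By Remark \ref{r_core_path_pull_left_right_words}, $q^{left}=p^{left}$ and $(q')^{left}=(p')^{left}$ in $\mathcal{S}$, and likewise for right words. This finishes this case.

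\emph{Case 2: $q^+=(q')^+=x$.} The only edges into $x$ come out of $a$, with label $s$. Hence $q=\bar q\,s$ and $q'=\bar q'\,s$, where $\bar q,\bar q'$ are paths ending at $a\neq x$. By Case 1 (or directly), $\bar q^{left}=(\bar q')^{left}$ and $\bar q^{right}=(\bar q')^{right}$ in $\mathcal{S}$.

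We relate the left word of $q$ to that of $\bar q$. The tree $\mathcal{T}_q$ is $\mathcal{T}_{\bar q}$ with a caret added under the leaf $\bar q$, whose leaves are labeled $a_0,\dots,a_{s-1},x,a_{f+1},\dots,a_k$. The branches to the left of $q$ are therefore those left of $\bar q$ in $\mathcal{T}_{\bar q}$, followed by the new leaves $a_0,\dots,a_{s-1}$. This gives $q^{left}=\bar q^{left}\,a_0\cdots a_{s-1}$, and similarly $q^{right}=a_{f+1}\cdots a_k\,\bar q^{right}$. The same formulas hold for $q'$.

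The minimal-subtree description needs some care here. $\mathcal{T}_q$ is the minimal full subtree containing the path $q$, so it consists of the carets along $q$. Each caret's leaves other than the continuation are leaves of $\mathcal{T}_q$, so the formula holds. Concatenating equal elements of $\mathcal{S}$ with the same words gives $q^{left}=(q')^{left}$ and $q^{right}=(q')^{right}$.

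One edge case is the empty-word convention. If $s=0$ and $\bar q^{left}$ is empty, then $q^{left}$ is empty. Equality is still preserved, because $\bar q^{left}$ empty iff $(\bar q')^{left}$ empty; this holds since the empty word equals no other word. Analogous remarks apply on the right. The main delicate step is this decomposition of $\mathcal{T}_q$, together with checking that Case 1 covers all paths ending at $a$.
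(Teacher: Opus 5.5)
Your proof is correct. Its structure matches the paper's: reduction to a single basic splitting, then two cases, with Case~1 handled identically via Remark~\ref{r_core_path_pull_left_right_words}.

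The only real difference is in Case~2, where $q^+=(q')^+=x$.

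\begin{itemize}
\item The paper moves \emph{down} from $x$. It considers the paths $q0,q'0$, which end at the old generator $a_s$, and $q(f-s),q'(f-s)$, which end at $a_f$. It then uses that $(q0)^{left}=q^{left}$ and $(q(f-s))^{right}=q^{right}$ hold literally as words. This is because passing to the leftmost (resp.\ rightmost) child of $x$ adds no branches on that side.
\item You move \emph{up} to the parent path $\bar q$ ending at $a$. This needs three extra ingredients: the observation that every edge into $x$ is the edge from $a$ labeled $s$; the explicit formulas $q^{left}=\bar q^{left}a_0\cdots a_{s-1}$ and $q^{right}=a_{f+1}\cdots a_k\,\bar q^{right}$; and the check that the empty-word convention survives concatenation.
\end{itemize}

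The paper's route avoids this bookkeeping. Yours has the mild advantage of treating left and right words with a single auxiliary pair of paths. Both arguments are complete.
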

\begin{proof}
    We can reduce the proof to the case that $\mathcal{Q}_r$ is a basic splitting of $\mathcal{P}_r$ and deduce the general case by induction. Since $\mathcal{Q}_r$ is a basic splitting of $\mathcal{P}_r$ there is a relation $a=a_0\dots a_k$ in $\mathcal{P}_r$ such that the set of generators of $\mathcal{Q}_r$ is the same as the set of generators of $\mathcal{P}_r$ together with a new generator $x$. The set of relations of $\mathcal{Q}_r$ is obtained from the set of relations of $\mathcal{P}_r$, by removing the relation $a=a_0\cdots a_k$ and adding the relations $a=a_0 \cdots a_{s-1}\cdot x\cdot a_{f+1}\cdots a_k$, and $x=a_s\cdots a_f$ for some integers $0\le s\le f\le k$. Now, let $q,q'$ be paths on $\mathcal{A}(\mathcal{Q}_r)$ such that $q^+=(q')^+$. We first assume that $q^+\neq x$. Let $p,p'$ be the corresponding paths in $\mathcal{A}(\mathcal{P}_r$). By Remark \ref{r_core_path_pull_left_right_words}, $p^{left}=q^{left}$, $(p')^{left}=(q')^{left}$, $p^{right}=q^{right}$, and $(p')^{right}=(q')^{right}$ (where equality is as elements of the semigroup $\mathcal{S}$). Since $q^+=(q')^+$ (on $\mathcal{A}(\mathcal{Q}_r)$), it holds that $p^+=q^+=(q')^+=(p')^+$. So by the assumption of the lemma, $p^{left}=(p')^{left}$ and $p^{right}=(p')^{right}$ (as elements of $\mathcal{S}$). So $q^{left}=p^{left}=(p')^{left}=(q')^{left}$ and $q^{right}=p^{right}=(p')^{right}=(q')^{right}$ (as elements of $\mathcal{S}$).
    
    We are left with the case that $q^+=(q')^+=x$. Let $\tilde{q},\tilde{q}'$ be the paths on $\mathcal{A}(\mathcal{Q}_r)$ such that $lab(\tilde{q})=lab(q)0,lab(\tilde{q}')=lab(q')0$. Note that $\tilde{q}^+=(\tilde{q}')^+=a_s\neq x$. As in the previous case, $(\tilde{q})^{left}=(\tilde{q}')^{left}$. Note that $(\tilde{q})^{left}=q^{left}$ and $(\tilde{q}')^{left}=(q')^{left}$. So $q^{left}=(q')^{left}$. A similar argument, using the words $q(f-s),q'(f-s)$ shows that $q^{right}=(q')^{right}$ (recall that $x=a_s\cdots a_f$ is a relation we added to $\mathcal{Q}_r$). 
\end{proof}
The following claim can be easily verified, and we use it to prove the lemma that follows.
\begin{Remark}
    Let $\mathcal{A}_r$ be a rooted semi tree-automaton. Let $\mathcal{A}_r'$ be a rooted semi tree-automaton obtained from $\mathcal{A}_r$ by foldings of type $2$. Then $\mathcal{P}(\mathcal{A}_r)$ and $\mathcal{P}(\mathcal{A}_r')$ represent the same semigroup.
\end{Remark}
\begin{Lemma}\label{l_core_foldings_does_not_change_equality}
    Let $\mathcal{A}_r$ be a rooted semi tree-automaton. Let $\mathcal{A}_r'$ be a rooted semi tree-automaton, which is obtained from $\mathcal{A}_r$ by foldings of type $2$. Let $\mathcal{S}$ be the semigroup represented by $\mathcal{P}(\mathcal{A}_r)$ and by $\mathcal{P}(\mathcal{A}_r')$. Assume that for every pair of paths $p,p'$ in $\mathcal{A}_r$, such that $p^+=(p')^+$, we have $p^{left}=(p')^{left}$ and $p^{right}=(p')^{right}$ in $\mathcal{S}$. Then for each pair of paths $q,q'$ in $\mathcal{A}_r'$, such that $q^+=(q')^+$, we have $q^{left}=(q')^{left}$ and $q^{right}=(q')^{right}$ in $\mathcal{S}$.
\end{Lemma}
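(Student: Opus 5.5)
We reduce to a single folding of type $2$ and then induct on the number of foldings. Suppose $\mathcal{A}_r'$ is obtained from $\mathcal{A}_r$ by identifying two distinct fathers $x_1,x_2$ into one vertex $x_{12}$. For this to be a folding of type $2$, we need $n(x_1)=n(x_2)=k+1$ and the $i$-th child of $x_1$ equal to the $i$-th child of $x_2$ for every $i$. As in the proof of Remark \ref{r_gen_semi_core_paths}, every path $q$ in $\mathcal{A}_r'$ lifts to a path $p$ in $\mathcal{A}_r$ with the same label. At each visit of $q$ to $x_{12}$, the lift passes through either $x_1$ or $x_2$; since the two vertices have identical outgoing edges, the continuation of the path is unaffected by which one is chosen.

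The first step is to compare left and right words of $q$ and $p$. The path trees $\mathcal{T}_{\mathcal{A}_r}$ and $\mathcal{T}_{\mathcal{A}_r'}$ have the same underlying labelled tree, since the outgoing edges of $x_1$, $x_2$ and $x_{12}$ are all labelled $0,\dots,k$. They differ only in vertex labels, where $x_{12}$ replaces $x_1$ or $x_2$. Hence $\mathcal{T}_q$ and $\mathcal{T}_p$ are the same tree, with branches $p_j$ corresponding to $q_j$. If $p_j^+\notin\{x_1,x_2\}$, then $q_j^+=p_j^+$. Otherwise $q_j^+=x_{12}$, and $x_{12}$ equals $x_1$ (resp. $x_2$) in $\mathcal{S}$, because all three equal the same product $c_0\cdots c_k$ of the common children in the respective presentations. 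So $q^{left}=p^{left}$ and $q^{right}=p^{right}$ in $\mathcal{S}$. To make this rigorous I would regard the generators $x_1,x_2,x_{12}$ as elements of $\mathcal{S}$, defined via the identification of both presentations with $\mathcal{S}$ from the preceding remark, in which $x_1=x_2=x_{12}=c_0\cdots c_k$.

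Now take paths $q,q'$ in $\mathcal{A}_r'$ with $q^+=(q')^+$, and let $p,p'$ be their lifts. If $q^+\neq x_{12}$, then $p^+=q^+=(q')^+=(p')^+$, and the hypothesis together with the first step gives the claim. If $q^+=(q')^+=x_{12}$, the lifts may end at different vertices, say $p^+=x_1$ and $(p')^+=x_2$, so the hypothesis cannot be applied directly. Here I would imitate the last case of Lemma \ref{l_core_splitting_does_not_change_equality}. Extend by the edge $0$: the endpoints $(p0)^+$ and $(p'0)^+$ are both the common child $c_0$, so the hypothesis gives $(p0)^{left}=(p'0)^{left}$. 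Since $\mathcal{T}_{p0}$ is $\mathcal{T}_p$ with a caret added under the leaf $p$, we have $(p0)^{left}=p^{left}$. Symmetrically, extending by the edge $k$ gives $(pk)^{right}=p^{right}$ and equality of right words. Combining with the first step finishes the single-folding case.

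The induction is immediate, because the intermediate automata are rooted semi tree-automata presenting the same $\mathcal{S}$, and the hypothesis is exactly the conclusion at the previous stage. If infinitely many foldings are involved, each pair of paths only uses finitely many identifications, so a finite stage suffices. The main obstacle is the first step: checking carefully that the minimal full subtrees $\mathcal{T}_q$ and $\mathcal{T}_p$ coincide as labelled-edge trees and that vertex labels agree in $\mathcal{S}$. This requires interpreting the generators of the two presentations consistently inside $\mathcal{S}$.
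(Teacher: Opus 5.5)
Your proposal is correct and follows essentially the same route as the paper. Both arguments lift $q,q'$ to $p,p'$ with the same labels and compare the left and right words termwise, using $x_1=x_2=x_{12}$ in $\mathcal{S}$; when the lifts end at different identified vertices, both pass to $p0,p'0$ (respectively $pk,p'k$), whose endpoints coincide, and note that $(p0)^{left}=p^{left}$ and $(pk)^{right}=p^{right}$. Your separate treatment of the case $q^+\neq x_{12}$ is slightly more careful than the paper, whose uniform use of $p0$ tacitly assumes that $p^+$ is a father.
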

\begin{proof}
It is enough to show the case that $\mathcal{A}'_r$ is obtained from $\mathcal{A}_r$ by a single folding of type $2$, and the rest follows by induction. Denote by $x$ and $y$ the vertices of $\mathcal{A}_r$ identified by the folding. Denote the vertex in $\mathcal{A}_r'$ they are identify to by $z$. Note that since $x$ and $y$ are identified by a folding of type $2$, they must have the same number of outgoing edges, which we denote by $n$.

Let $q,q'$ be paths on $\mathcal{A}_r'$ such that $q^+=(q')^+$. Note that a word labels a path on $\mathcal{A}_r$ if and only if it labels a path on $\mathcal{A}_r'$. So, the paths $q,q'$ on $\mathcal{A}_r'$ can be lifted to paths $p,p'$ on $\mathcal{A}_r$, which are labeled $lab(q)$ and $lab(q')$, respectively. Note that either $p^+=(p')^+$, or $\{p^+,(p')^+\}=\{x,y\}$. In any case, $(p0)^+=(p'0)^+$ and $(p(n-1))^+=(p'(n-1))^+$. Also note that $q^{left}$ and $p^{left}=(p0)^{left}$ can be expressed in the following way. $q^{left}=b_1\cdots b_m$, $p^{left}=a_1\cdots a_m$, where for each $i\in\{1,\dots,m\}$, either $a_i=b_i$, or $a_i\in\{x,y\}$ and $b_i=z$. Since $x$ and $y$ are identified by a folding, they are equal in the semigroup $\mathcal{S}$, so $q^{left}=(p0)^{left}$ in $\mathcal{S}$. Similarly, $(q')^{left}=(p0')^{left}$, and $q^{right}=(p(n-1))^{right}$ and $(q')^{right}=(p'(n-1))^{right}$ in the semigroup $\mathcal{S}$. Since $(p0)^+=(p'0)^+$, it follows by the assumption that $(p0)^{left}=(p'0)^{left}$, so $q^{left}=(q')^{left}$. Similarly, $(p(n-1))^+=(p'(n-1))^+$, so by the assumption $(p(n-1))^{right}=(p'(n-1))^{right}$, so $q^{right}=(q')^{right}$.
\end{proof}
We state two definitions and a lemma that were given in \cite{G2} for $F_2$. The proof of the lemma readily adapts to the case of $F_n$.
\begin{Definition}
    Let $\mathcal{A}_r$ be an $n$-ary rooted tree automaton. An \textit{extension} of $\mathcal{A}_r$ is an automaton obtained by attaching to each leaf $l$ of $\mathcal{A}_r$ with a finite (possibly empty) or infinite $n$-ary tree, where each $n$-ary tree is viewed as an $n$-ary tree automaton in a natural way.

    If $\mathcal{A}_r'$ is an extension of $\mathcal{A}_r$, we say that $\mathcal{A}_r$ is a \textit{reduction} of $\mathcal{A}_r'$. If $\mathcal{A}_r$ is not the extension of any rooted $n$-ary tree automaton other than of itself, we say it is \textit{reduced}.
\end{Definition}
\begin{Definition}
    Let $\mathcal{A}_r$ be an $n$-ary rooted tree automaton. We say that $\mathcal{A}_r$ is a \textit{core automaton} if there exists a subgroup $H$ of $F_n$ such that $\mathcal{A}_r$ is isomorphic to the core $\mathcal{L}(H)$ (isomorphism between rooted tree automata is defined in a natural way).
\end{Definition}
\begin{Lemma}\label{l_core_core_automaton_char}
    Let $\mathcal{A}_r$ be an $n$-ary rooted tree automaton. Let $\mathcal{T}_{\mathcal{A}_r}^{min}$ be a minimal tree associated with $\mathcal{A}_r$. Then $\mathcal{A}_r$ is a core automaton if and only if the following conditions hold.
    \begin{itemize}
        \item[$(1)$] $\mathcal{A}_r$ is reduced.
        \item[$(2)$] For every two branches $u,v$ of $\mathcal{T}_{\mathcal{A}_r}^{min}$, such that $u^+=v^+$, it holds that $u^{left}=v^{left}$ and $u^{right}=v^{right}$ in the associated semigroup.
    \end{itemize}
\end{Lemma}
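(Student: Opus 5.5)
We prove the two directions separately, following the argument of \cite{G2} for $F_2$.

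\emph{Necessity.} Suppose $\mathcal{A}_r\cong\mathcal{L}(H)$ for some $H\le F_n$. We may assume $H=\mathcal{DG}(\mathcal{A}_r)$, since the core of a closed subgroup is its own core.

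For reducedness, argue by contradiction. Suppose $\mathcal{A}_r$ is an extension of a smaller automaton $\mathcal{B}_r$, obtained by attaching an $n$-ary tree below some leaf $l$. Every vertex of that attached tree is reached by a unique path through $l$, so no accepted diagram can identify two of its leaves with anything else. Consequently the reduced diagrams of elements of $H$ never pass through a caret under $l$; if they did, a common caret would be removable. Since $\mathcal{L}(H)$ is built only from the reduced diagrams of a generating set of $H$, followed by foldings, it would then equal $\mathcal{B}_r$ and not $\mathcal{A}_r$.

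For condition $(2)$, take branches $u,v$ of $\mathcal{T}^{min}_{\mathcal{A}_r}$ with $u^+=v^+$.
\begin{itemize}
\item By Lemma \ref{l_pre_pair_of_branches_H} there are elements $h\in H$ with pairs of branches $uw\to vw$ for all long words $w$.
\item Fix a tree diagram $(T_1,T_2)$ of such an $h$ in which $T_1$ contains the path $u$ and $T_2$ contains the path $v$, both readable on $\mathcal{A}_r$.
\item Reading the leaves of $T_1$ in $\mathcal{A}_r$ gives a word in the generators. This word equals $r$ in the semigroup, because it is obtained from $r$ by applying the relations.
\item The leaves of $T_1$ to the left of $u$ read a word equal to $u^{left}$ in the semigroup, since $\mathcal{T}_u$ is a subtree of $T_1$ and extra carets only apply relations. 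The same holds for $T_2$ and $v^{left}$.
\item Acceptance matches the leaves to the left of $u$ with the leaves to the left of $v$ label by label, so the two left words coincide. This gives $u^{left}=v^{left}$, and the argument for right words is symmetric.
\end{itemize}
This is where the diagram-group (Guba–Sapir) viewpoint enters: an accepted diagram is a derivation $r\to\cdots\to r$ in the semigroup.

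\emph{Sufficiency.} Assume $(1)$ and $(2)$, and let $H=\mathcal{DG}(\mathcal{A}_r)$. We must show $\mathcal{L}(H)=\mathcal{A}_r$. Since $H$ is accepted by $\mathcal{A}_r$, the core $\mathcal{L}(H)$ maps onto a reduction-type subautomaton of $\mathcal{A}_r$. It therefore suffices to exhibit enough elements of $H$ to force every vertex and every identification of $\mathcal{A}_r$ into the core.

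For each pair of branches $u,v$ of $\mathcal{T}^{min}$ with $u^+=v^+$, we want an element of $H$ having the pair of branches $u\to v$. Equality of left words in the semigroup means there is a sequence of relation applications turning $u^{left}$ into $v^{left}$, and likewise for the right words. We realize each sequence by expanding and contracting carets along readable paths of $\mathcal{A}_r$, which yields a forest diagram between the two left parts and one between the two right parts. Gluing these forests with the middle leaf $u\to v$ gives an accepted tree diagram, hence an element of $H$.

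These elements force every identification of leaves of $\mathcal{T}^{min}$ in the core. Reducedness of $\mathcal{A}_r$ guarantees that every father of $\mathcal{A}_r$ genuinely appears in some reduced diagram, so no extension survives. It also guarantees that no identification is lost, so the core has no fewer fathers than $\mathcal{A}_r$.

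\emph{Main obstacle.} The hardest step is turning an equality in the semigroup into an actual diagram readable on $\mathcal{A}_r$. A derivation may pass through words that are not read along any single tree. I would handle this as in \cite{G2}: represent the derivation as a van Kampen–type diagram over $\mathcal{P}(\mathcal{A}_r)$, then use conditions $(2)$–$(3)$ of a tree presentation (each generator has at most one relation, and relations are determined by their right-hand sides) to normalize it into a tree-shaped diagram.
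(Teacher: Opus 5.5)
Your sufficiency argument breaks at ``These elements force every identification of leaves of $\mathcal{T}^{min}$ \dots\ no identification is lost.'' Besides identifications between two branches, the core must also identify each branch $u$ of $\mathcal{T}^{min}_{\mathcal{A}_r}$ whose label $u^+=a$ is a \emph{father} of $\mathcal{A}_r$ with the unique inner vertex $p$ of $\mathcal{T}^{min}_{\mathcal{A}_r}$ labelled $a$. Condition $(2)$ says nothing about such pairs, and your elements $u\to v$ do not produce them. A folding of type $2$ cannot produce them either, since the children of $u$ are not in $\mathcal{T}^{min}_{\mathcal{A}_r}$ and nothing identifies them with those of $p$.

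No argument can close this gap, because with $(2)$ restricted to branches the statement is false. Take $n=2$ and vertices $r,a,c,d$: the children of $r$ are $a,d$, the children of $a$ are $a,c$, and $c\neq d$ are leaves. This is a binary tree automaton. It is reduced, since the loop at $a$ cannot lie inside a tree attached at $a$ or at $r$. Its unique minimal tree has branches $00,01,1$ with distinct labels $a,c,d$, so $(2)$ holds vacuously. However, the only readable trees are the left combs with leaves $0^m,0^{m-1}1,\dots,01,1$. Hence $\mathcal{DG}(\mathcal{A}_r)$ is trivial and $\mathcal{A}_r$ is not a core automaton. The failure is exactly the leaf--inner pair $00,0$: $00^{right}=cd\neq d=0^{right}$ in $\langle r,a,c,d\mid r=ad,\ a=ac\rangle$.

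What is true is the version with $(2)$ imposed on all pairs of vertices of $\mathcal{T}^{min}_{\mathcal{A}_r}$. This is all that Corollary~\ref{c_core_splitting_does_not_change_core} needs, since Lemmas~\ref{l_core_splitting_does_not_change_equality} and~\ref{l_core_foldings_does_not_change_equality} concern all pairs of paths. By induction on length, using $(xi)^{left}=x^{left}a_0\cdots a_{i-1}$ and $(xi)^{right}=a_{i+1}\cdots a_{n-1}x^{right}$ when $x^+$ has children $a_0,\dots,a_{n-1}$, this version is equivalent to $(2)$ for all pairs of paths with a common end vertex. Your necessity argument yields this stronger form. With it, your common-expansion construction gives, for all paths $x,y$ with $x^+=y^+$, an element of $H=\mathcal{DG}(\mathcal{A}_r)$ with the pair of branches $x\to y$. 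So the natural morphism $\mathcal{L}(H)\to\mathcal{A}_r$, $p^+\mapsto p^+$, is injective on readable paths.

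Readability is where reducedness enters, and it needs an actual argument:
\begin{itemize}
\item Suppose $y$ ends at a leaf of $\mathcal{L}(H)$ but at a father $a$ of $\mathcal{A}_r$.
\item Reducedness gives a nonempty $z$ and a path $y'$ with $y'^+=(yz)^+$ such that either $y'=y$, or $y'$ enters that vertex by a different edge than $yz$.
\item The reduced diagram of an element with the pair $yz\to y'$ is readable on $\mathcal{L}(H)$. So its pair through $yz$ is $x_1\to x_2$ with $x_1$ a prefix of $y$, say $y=x_1s'$.
\item Then $y'=x_2s'z$ ends with the same edge as $yz$, or, if $y'=y$, the path $y$ continues past the leaf $y^+$ of $\mathcal{L}(H)$. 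Either way this is a contradiction.
\end{itemize}

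There is also a smaller slip in the necessity part. Acceptance matches the leaves left of $u$ with those left of $v$ only for the pair $u\to v$ itself, or for $u0^k\to v0^k$ (use $(n-1)^k$ for right words). For a general pair $uw\to vw$ you only get $u^{left}\lambda=v^{left}\lambda$, where $\lambda$ is the left word of $w$ read from $u^+$, and semigroups are not cancellative.
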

As a corollary of Lemmas \ref{l_core_splitting_does_not_change_equality}, \ref{l_core_foldings_does_not_change_equality} and \ref{l_core_core_automaton_char} we get the following.
\begin{Corollary}\label{c_core_splitting_does_not_change_core}
    Let $m,n\ge 2$ be natural numbers, and let $\mathcal{A}_r$ be an $n$-ary core automaton. Let $\mathcal{A}_r'$ be an $m$-ary rooted semi tree-automaton such that $\mathcal{P}(\mathcal{A}_r')$ is a splitting of $\mathcal{P}(\mathcal{A}_r)$. Assume that $\mathcal{A}_r''$ is a reduced $m$-ary rooted tree-automaton obtained from $\mathcal{A}_r'$ by foldings of type $2$. Then $\mathcal{A}_r''$ is an $m$-ary core automaton.
\end{Corollary}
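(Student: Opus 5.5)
The plan is to verify the two conditions of Lemma \ref{l_core_core_automaton_char} for $\mathcal{A}_r''$. Condition $(1)$, that $\mathcal{A}_r''$ is reduced, is a hypothesis of the corollary, so everything hinges on condition $(2)$. Fix a minimal tree $\mathcal{T}^{min}_{\mathcal{A}_r''}$, and let $\mathcal{S}$ be the semigroup presented by $\mathcal{P}(\mathcal{A}_r)$. I want to show that for any two branches $u,v$ of this tree with $u^+=v^+$, we have $u^{left}=v^{left}$ and $u^{right}=v^{right}$ in $\mathcal{S}$. In fact I will prove the stronger statement that this holds for \emph{all} pairs of paths in $\mathcal{A}_r''$ with the same end vertex, since branches of a minimal tree are in particular paths.

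\textbf{Step 1: the starting automaton.} Since $\mathcal{A}_r$ is an $n$-ary core automaton, Lemma \ref{l_core_core_automaton_char} gives the left/right word equality for branches of one minimal tree $\mathcal{T}^{min}_{\mathcal{A}_r}$. I need to upgrade this to arbitrary pairs of paths $p,p'$ in $\mathcal{A}_r$ with $p^+=(p')^+$, because Lemmas \ref{l_core_splitting_does_not_change_equality} and \ref{l_core_foldings_does_not_change_equality} assume the statement for all paths. I would prove this by induction on the depth of paths in the path tree $\mathcal{T}_{\mathcal{A}_r}$.

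The key observation is how left and right words transform along an edge. If $q=pi$ and $p^+=a$ with relation $a=a_0\cdots a_{k}$, then
\[
q^{left}=p^{left}a_0\cdots a_{i-1}\quad\text{and}\quad q^{right}=a_{i+1}\cdots a_k\,p^{right}
\]
in $\mathcal{S}$, because expanding a leaf labelled $a$ into its children does not change the value of the word in $\mathcal{S}$. Consequently, if $p^+=(p')^+$ and the equality holds for $p,p'$, it propagates to $pi,p'i$ for every $i$.

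Now take any path $w$ in $\mathcal{A}_r$. Every vertex of $\mathcal{A}_r$ occurs as the label of a branch or inner vertex of $\mathcal{T}^{min}_{\mathcal{A}_r}$. Along $w$, take the last prefix that coincides with a vertex of the minimal tree; if $w$ leaves the tree, that prefix is a branch $b$. That branch $b$ ends at a father $c$ of $\mathcal{A}_r$, which also labels a unique father vertex $b'$ of the minimal tree. Comparing $b$ and $b'$ requires relating a branch to an interior vertex. I expect this is exactly what the $F_2$ argument in \cite{G2} handles, and I would follow it, using the caret expansion identity repeatedly and the fact that equality for the children of $b'$ (which are branches or fathers) determines equality for $b'$ itself. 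The case where a leaf word of one of the paths is empty must be treated separately; there the convention that the empty word differs from every other word has to be respected.

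\textbf{Step 2: transport along the modifications.} With the all-paths property established for $\mathcal{A}_r$, apply Lemma \ref{l_core_splitting_does_not_change_equality} inductively along the chain of basic splittings. This gives the property for $\mathcal{A}_r'$, with equalities in $\mathcal{S}$, since splittings preserve the semigroup. Then apply Lemma \ref{l_core_foldings_does_not_change_equality} to pass to $\mathcal{A}_r''$; foldings of type $2$ again preserve the represented semigroup. Branches of $\mathcal{T}^{min}_{\mathcal{A}_r''}$ are paths, so condition $(2)$ of Lemma \ref{l_core_core_automaton_char} holds for $\mathcal{A}_r''$ as an $m$-ary automaton. Note that the lemma applies to $m$-ary automata exactly as to $n$-ary ones, since the characterization does not depend on the arity. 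Hence $\mathcal{A}_r''$ is an $m$-ary core automaton.

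\textbf{Main obstacle.} The main obstacle is Step 1: the lemmas are stated for all paths, while the characterization gives the property only for branches of one minimal tree. If the extension argument proves awkward, my fallback is the converse direction of Lemma \ref{l_core_core_automaton_char} as proved in \cite{G2}. That proof realizes $\mathcal{A}_r=\mathcal{L}(H)$ with $H$ the diagram group over $\mathcal{A}_r$ and shows directly, via Lemma \ref{l_pre_pair_of_branches_H}, that any two paths with the same end vertex have equal left and right words.
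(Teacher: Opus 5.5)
Your outline is the paper's: there the corollary is read off directly from three lemmas. Lemma \ref{l_core_core_automaton_char} is applied to $\mathcal{A}_r$. The transfer Lemmas \ref{l_core_splitting_does_not_change_equality} and \ref{l_core_foldings_does_not_change_equality} then carry the equalities over, all in the common semigroup $\mathcal{S}$. Finally Lemma \ref{l_core_core_automaton_char} is applied again to the reduced automaton $\mathcal{A}_r''$. You are right that there is a mismatch the paper passes over. The transfer lemmas assume the left/right equalities for \emph{all} pairs of paths with a common end vertex, whereas condition $(2)$ only concerns branches of one minimal tree.

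Your primary bridge for this, the induction along the path tree, cannot be completed. Condition $(2)$ never relates a branch labelled by a father $c$ to the interior vertex of $\mathcal{T}^{min}_{\mathcal{A}_r}$ labelled $c$. Your ``children of $b'$'' step would need the claim for the children of $b$, which lie outside the minimal tree, so the argument is circular.

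Here is a concrete example. Take the binary automaton with $r\to(a,d)$, $a\to(a,c)$ and leaves $c,d$. It is reduced, and its minimal tree has branches $00,01,1$ with distinct labels, so $(2)$ holds vacuously. Yet $0^+=(00)^+=a$ while $0^{right}=d\neq cd=(00)^{right}$ in $\langle r,a,c,d\mid r=ad,\ a=ac\rangle$. Its diagram group is trivial, so it is not a core automaton. This also shows the branch-only form of $(2)$ is too weak for the ``if'' direction. It is therefore essential that your Step 2 delivers the property for all paths of $\mathcal{A}_r''$.

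So the core hypothesis must be used substantively: your fallback has to be the actual proof of Step 1. It works if you choose $w$ carefully.

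Write $\mathcal{A}_r=\mathcal{L}(H)$. If $u^+=v^+$, Lemma \ref{l_pre_pair_of_branches_H} with $w=0^k$ gives $h\in H$ with the pair of branches $u0^k\to v0^k$. Every tree-diagram of $h$ arises from its reduced diagram, which is accepted, by adding common carets. Hence this pair is $u'z\to v'z$ for some pair $u'\to v'$ of the reduced diagram, and there are two cases.
If $u'$ is a prefix of $u$, adding carets gives an accepted diagram with the pair $u\to v$.
Otherwise $u'=u0^j$ and $v'=v0^j$.

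Both trees of an accepted diagram have the same sequence of leaf labels. This gives $u^{left}=(u0^j)^{left}=(v0^j)^{left}=v^{left}$ in $\mathcal{S}$, and symmetrically $u^{right}=v^{right}$ using $w=(n-1)^k$. Choosing $w\in\{0^k,(n-1)^k\}$ is what spares you from cancelling a common factor in $\mathcal{S}$, which is not available in general. With Step 1 done this way, your Step 2 and the final application go through as written.
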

Let $\mathcal{A}_r$ be a rooted $n$-ary semi tree automaton (i.e., a rooted semi tree automaton where each father vertex has exactly $n$ children). Let $f\in F_n$. We say that $f$ is \emph{accepted} by $\mathcal{A}_r$ if there exists a tree diagram representing $f$, which is accepted by $\mathcal{A}_r$ (where acceptance of a tree-diagram by a rooted $n$-ary semi tree automaton is defined to be the same as acceptance of a tree-diagram by a rooted $n$-ary tree automaton). The \emph{diagram group accepted by $\mathcal{A}_r$} is defined as the group of all elements in $F_n$ accepted by $\mathcal{A}_r$, and is denoted by $\mathcal{DG}(\mathcal{A}_r)$. Note that if $\mathcal{A}_r$ is a rooted $n$-ary tree automaton, these definitions coincide with Definition \ref{d_pre_group_accepted_by_automaton}.
\begin{Lemma}\label{l_core_foldings_type_2_diagram_groups_isomorphism}
    Let $\mathcal{A}_r$ be a rooted $n$-ary semi tree automaton. Let $\mathcal{A}_r'$ be a rooted $n$-ary semi tree automaton, obtained from $\mathcal{A}_r$ by a finite number of foldings of type $2$. Then $\mathcal{DG}(\mathcal{A}_r)=\mathcal{DG}(\mathcal{A}_r')$.
\end{Lemma}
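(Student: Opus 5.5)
It suffices to treat a single folding of type $2$ and then induct on the number of foldings. Let $x\neq y$ be fathers of $\mathcal{A}_r$ with the same number $n$ of children, and suppose the $i$-th child of $x$ equals the $i$-th child of $y$ for every $i$. Let $\mathcal{A}_r'$ be obtained by identifying $x$ and $y$ into one vertex $z$. Let $\pi$ denote the natural map on vertices, which sends $x,y\mapsto z$ and fixes every other vertex.

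Two observations drive the argument. First, a word is readable on $\mathcal{A}_r$ if and only if it is readable on $\mathcal{A}_r'$, and for a readable word $w$ the terminal vertex in $\mathcal{A}_r'$ is $\pi(w^+)$. This is proved by induction on the length of $w$, using the fact that $x$ and $y$ have identical outgoing edge data. Second, for any readable words $w_1,w_2$ we have $\pi(w_1^+)=\pi(w_2^+)$ if and only if either $w_1^+=w_2^+$, or $\{w_1^+,w_2^+\}=\{x,y\}$. In the latter case $(w_1i)^+=(w_2i)^+$ in $\mathcal{A}_r$ for every $i\in\{0,\dots,n-1\}$.

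The inclusion $\mathcal{DG}(\mathcal{A}_r)\subseteq\mathcal{DG}(\mathcal{A}_r')$ is immediate. If a tree-diagram $(T_1,T_2)$ is accepted by $\mathcal{A}_r$, then it is readable on $\mathcal{A}_r'$ by the first observation. Moreover $u^+=v^+$ in $\mathcal{A}_r$ implies $\pi(u^+)=\pi(v^+)$, which is the endpoint condition in $\mathcal{A}_r'$.

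For the reverse inclusion, let $f\in\mathcal{DG}(\mathcal{A}_r')$ be represented by a tree-diagram $(T_1,T_2)$ accepted by $\mathcal{A}_r'$. By the first observation, $(T_1,T_2)$ is readable on $\mathcal{A}_r$. For each pair of branches $u\to v$ we know $\pi(u^+)=\pi(v^+)$. Hence either $u^+=v^+$ in $\mathcal{A}_r$, or $\{u^+,v^+\}=\{x,y\}$. Let $(T_1',T_2')$ be obtained by adding a common caret under every pair $u\to v$ of the second kind. This is an equivalent tree-diagram representing $f$, and it is readable on $\mathcal{A}_r$ because $x$ and $y$ are fathers. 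Its new pairs of branches are $ui\to vi$, and by the second observation these satisfy $(ui)^+=(vi)^+$. The untouched pairs already had equal endpoints. So $(T_1',T_2')$ is accepted by $\mathcal{A}_r$, and $f\in\mathcal{DG}(\mathcal{A}_r)$.

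Since $\mathcal{A}_r'$ is again a rooted $n$-ary semi tree automaton, the case of finitely many foldings follows by induction on their number. The only delicate point is the first observation, that readability and endpoints transfer correctly through the identification. This needs that $x$ and $y$ have literally the same children, so that paths passing through $x$ or $y$ continue identically. Note that the definition of acceptance for semi tree automata allows any representing diagram, not only the reduced one. This is exactly what legitimizes the common-caret expansion in the reverse inclusion, and it is why Lemma \ref{l_core_automata_properties} is not needed here.
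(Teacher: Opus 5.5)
Your proof is correct and follows the paper's argument. Both reduce to a single folding, note that the inclusion $\mathcal{DG}(\mathcal{A}_r)\subseteq\mathcal{DG}(\mathcal{A}_r')$ is immediate, and prove the reverse inclusion by adding common carets under the pairs of branches that end at the identified vertices. The only differences are cosmetic. You expand only the pairs with $\{u^+,v^+\}=\{x,y\}$ and spell out via $\pi$ why readability and endpoints transfer, whereas the paper expands every pair with $u^+=v^+=z$ in $\mathcal{A}_r'$ and leaves those checks implicit.
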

\begin{proof}
    It is enough to show the case where $\mathcal{A}_r'$ is obtained from $\mathcal{A}_r$ by a single folding of type $2$, and the rest follows by induction. Denote the vertices of $\mathcal{A}_r$ identified by the folding by $x$ and $y$ and denote the vertex they were identified into by $z$. Note that any tree-diagram which is accepted by $\mathcal{A}_r$ is also accepted by $\mathcal{A}_r'$, so $\mathcal{DG}(\mathcal{A}_r)\subseteq \mathcal{DG}(\mathcal{A}_r')$. In the other direction, let $f\in\mathcal{DG}(\mathcal{A}_r')$. Then there is a tree-diagram $(T_1,T_2)$ representing $f$ that is accepted by $\mathcal{A}_r'$. Note that one can construct a tree-diagram $(T_1',T_2')$ which is equivalent to $(T_1,T_2)$ and is accepted by $\mathcal{A}_r$ by adding a caret under any pair of branches $u\rightarrow v$ of $(T_1,T_2)$ such that $u^+=v^+=z$ in $\mathcal{A}_r'$. So $f\in\mathcal{DG}(\mathcal{A}_r)$.
\end{proof}

%% file: Onmaximalsubgroups.tex
\section{Maximal subgroups}
\label{section:maximal_subgroups}
In this section, we answer a variant of a question posed by Savchuk \cite{Sav1,Sav2}, which was also asked by Aiello and Nagnibeda \cite{AN}, regarding the existence of maximal subgroups of $F_n$.

In \cite{Sav1}, Savchuk initiated the study of maximal subgroups of $F_2$ by proving that for any $\alpha\in(0,1)$, the stabilizer of $\alpha$ in $F_2$ is a maximal subgroup of infinite index. He asked whether there are other maximal subgroups of infinite index in $F_2$. Golan and Sapir answered this question affirmatively in \cite{GS1} by providing an explicit example of a maximal subgroup of infinite index in $F_2$ that does not fix any point. They also introduced a general method for constructing maximal subgroups of $F_2$ via core automata. Aiello and Nagnibeda \cite{AN2} found three additional examples of maximal subgroups of infinite index in $F_2$ that do not fix any point in $(0,1)$. Later, in \cite{G2}, Golan found a countable family of non-isomorphic maximal subgroups of infinite index in $F_2$ that do not fix any point in $(0,1)$. Aiello and Nagnibeda extended Savchuk's question to $F_n$ for a general $n\ge 2$, and provided an example of a maximal subgroup of infinite index in $F_3$ that does not fix any point in the open unit interval (\cite{AN}). The question for a general $n$ remained open.

We show that for every $n\ge 2$, $F_n$ has a maximal subgroup of infinite index that does not fix any point in $(0,1)$. Following the approach introduced by Golan and Sapir, we analyze subgroups accepted by core automata. We first note the following result, which is an immediate consequence of Theorem \ref{gen_t_main_2}.

\begin{Corollary}\label{C_maximal_1}
    Let $H$ be a strict subgroup of $F_n$ satisfying the following properties:
    \begin{enumerate}
        \item[$(a)$] $\pi_{ab}(H)=\Z^n$.
        \item[$(b)$] For each $i\in\{0,\dots,n-2\}$, there exist an element $h_i\in H$, and $\alpha_i\in D_{n,i}$, such that $h_i(\alpha_i)=\alpha_i,h_i'(\alpha_i^-)=1,h_i'(\alpha_i^+)=n$.
        \item[$(c)$] For any $g_1\in F_n\setminus H$, there exists an inner $n$-ary word $w$ such that for any two $n$-ary words $w_1,w_2$ such that $t(w_1)=t(w_2)$, it holds that $(ww_1)^+=(ww_2)^+$ in $\mathcal{L}_{sem}(H\cup\{g_1\})$.
        \item[$(d)$] For any $g_1\in F_n\setminus H$, $\mathcal{L}(H\cup\{g_1\})$ accepts $[F_n,F_n]$.
    \end{enumerate}
    Then $H$ is a maximal subgroup of $F_n$.
\end{Corollary}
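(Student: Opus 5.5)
To show that $H$ is maximal, I will take an arbitrary $g_1\in F_n\setminus H$, set $K:=\langle H\cup\{g_1\}\rangle$, and prove $K=F_n$. Since $H$ is a strict subgroup by assumption, this is exactly maximality. The plan is to apply Theorem \ref{gen_t_main_2} with the generating set $X:=H\cup\{g_1\}$, so that $\langle X\rangle=K$. I will check its four conditions one at a time.

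Conditions $(1)$ and $(4)$ come straight from the hypotheses. Hypothesis $(d)$ says that $\mathcal{L}(H\cup\{g_1\})$ accepts every element of $[F_n,F_n]$. By definition the core of $X$ is the core of $K$, so $\mathcal{L}(X)=\mathcal{L}(K)$, and the closure $\Cl(K)$ is the set of elements accepted by $\mathcal{L}(K)$. Hence $[F_n,F_n]\subseteq\Cl(K)$, which is condition $(1)$. Hypothesis $(c)$, applied to this particular $g_1$, gives an inner word $w$ such that $(ww_1)^+=(ww_2)^+$ in $\mathcal{L}_{sem}(X)$ whenever $t(w_1)=t(w_2)$. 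This is condition $(4)$ with $u:=w$.

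The other two conditions hold already for $H$ and pass to $K$ because $H\le K$. For condition $(2)$: hypothesis $(a)$ says $H$ surjects onto the abelianization $\Z^n=F_n/[F_n,F_n]$, so $H[F_n,F_n]=F_n$, and therefore $K[F_n,F_n]=F_n$. For condition $(3)$: the elements $h_i\in H\subseteq K$ and points $\alpha_i\in D_{n,i}$ from hypothesis $(b)$ work unchanged for $K$.

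Theorem \ref{gen_t_main_2} then gives $K=F_n$. The one point that needs care is that the core and semi-core in hypotheses $(c)$ and $(d)$ are built from the set $H\cup\{g_1\}$, which is exactly the set $X$ used in Theorem \ref{gen_t_main_2}. The theorem allows an arbitrary, possibly infinite, subset $X\subseteq F_n$, so no finiteness is needed. It also matters that the semi-core depends on the chosen generating set. Here we use the same set $X$ in both the hypothesis and the theorem, so there is no mismatch.
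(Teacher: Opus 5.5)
Your proof is correct and follows the paper's argument: the paper also fixes $g\in F_n\setminus H$ and applies Theorem~\ref{gen_t_main_2} to the generating set $H\cup\{g\}$ of $\langle H,g\rangle$, noting that hypotheses $(a)$--$(d)$ supply its conditions $(1)$--$(4)$. You additionally spell out the matching that the paper leaves implicit: $(d)$ gives $(1)$ because the core does not depend on the generating set, and $(a)$, $(b)$, $(c)$ give $(2)$, $(3)$, $(4)$ respectively.
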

\begin{proof}
    Let $H$ be a strict subgroup of $F_n$ such that conditions $(a)-(d)$ hold. Let $g\in F_n\setminus H$. Consider the group $H_1:=\langle H,g\rangle$. Conditions $(a)-(d)$ imply that conditions $(1)-(4)$ of Theorem \ref{gen_t_main_2} hold for the generating set $H\cup\{g\}$ of $H_1$, so by the theorem, $H_1=F_n$. Since $H$ is a strict subgroup, it is a maximal subgroup of $F_n$.
\end{proof}
\begin{Remark}
    The rooted $n$-ary tree-automaton defined by the following tree semigroup presentation is the core of $F_n$. The generators are\\$r,f,g,a_0,\dots,a_{n-2}$, and the relations are as follows.
    \begin{itemize}
        \item $r= f\cdot a_1\cdots a_{n-2}\cdot g$.
        \item $f= f\cdot a_1\cdots a_{n-2}\cdot a_0$.
        \item $g= a_0\cdots a_{n-2}\cdot g$.
        \item For each $i\in\{0,\dots,n-2\}$, $a_i=a_i\cdot a_{i+1}\cdots a_i$ (where addition is taken modulo $n-1$).
    \end{itemize}
    See Figure \ref{core of $F_n$} for a minimal tree associated with the core of $F_n$. Indeed, note that every $n$-ary word labels a path on this rooted $n$-ary tree-automaton, every word of the form $0^k$ where $k\in\N\setminus\{0\}$ has the same end vertex, every word of the form $(n-1)^k$ where $k\in\N\setminus\{0\}$ has the same end vertex and there are exactly $n-1$ inner vertices. Hence, intuitively, "everything that can be identified is identified".
\end{Remark}
\begin{figure}[H]
\centering

\begin{tikzpicture}[grow'=down, level distance=1.4cm,
  level 1/.style={sibling distance=2cm},
  level 2/.style={sibling distance=1.5cm}]

\node {$r$}
    child {node {$g$}
        child{node {$g$}}
        child{node {$a_{n-2}$}}
        child {node {$\cdots$} edge from parent[draw=none]}
        child{node {$a_0$}}}
    child {node {$a_{n-2}$}}
    child {node {$\cdots$} edge from parent[draw=none]}
    child {node {$a_{1}$}}
    child {node {$f$}
        child{node {$a_0$}}
        child{node {$a_{n-2}$}}
        child {node {$\cdots$} edge from parent[draw=none]}
        child{node {$a_1$}}
        child{node {$f$}}}
;
\end{tikzpicture}
\medskip
\begin{tikzpicture}[grow'=down, level distance=1.4cm,
  level 1/.style={sibling distance=2cm}]
\node {$a_i$}
    child {node {$a_i$}}
    child {node {$\cdots$} edge from parent[draw=none]}
    child {node {$a_{i+1}$}}
    child {node {$a_i$}};
\end{tikzpicture}
\caption{An image describing a minimal tree associated with the core of $F_n$. The children of a vertex labeled by $a_i$ are written separately of the tree. To see a minimal tree, one should think about those carets as a part of the tree.}\label{core of $F_n$}
\end{figure}
We now prove Theorem \ref{intro_t_2}. Recall the theorem.
\begin{Theorem}
    $F_n$ has a maximal subgroup of infinite index that does not fix any point in the open unit interval, and is isomorphic to $F_{2n-1}$.
\end{Theorem}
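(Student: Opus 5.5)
Starting from the tree semigroup presentation of the core $\mathcal{L}(F_{2n-1})$, as described in the Remark above with $2n-1$ in place of $n$, we will build an $n$-ary core automaton whose diagram group is isomorphic to $F_{2n-1}$, and then verify conditions $(a)$--$(d)$ of Corollary \ref{C_maximal_1}. The core of $F_{2n-1}$ has generators $r,f,g,a_0,\dots,a_{2n-3}$, and every relation has a right-hand side of length $2n-1$. A single basic splitting of each relation, grouping $n$ consecutive letters into a new generator, turns the relation into two relations with $n$ letters on the right. For example, $r=f\,a_1\cdots a_{2n-3}\,g$ splits as $r=f\,a_1\cdots a_{n-2}\,x_r$ with $x_r=a_{n-1}\cdots a_{2n-3}\,g$; the relations for $f$, $g$ and each $a_i$ are split analogously.

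The result is an $n$-ary rooted semi tree-automaton $\mathcal{A}'$ whose presentation is a splitting of $\mathcal{P}(\mathcal{L}(F_{2n-1}))$. We then apply all foldings of type $2$ to obtain $\mathcal{A}''$ and check that $\mathcal{A}''$ is reduced, since every vertex is a father. Corollary \ref{c_core_splitting_does_not_change_core} then shows that $\mathcal{A}''$ is a core automaton, so $H:=\mathcal{DG}(\mathcal{A}'')$ is a closed subgroup of $F_n$. By Guba--Sapir, the diagram group is determined by the semigroup presentation up to the splitting operation, and by Lemma \ref{l_core_foldings_type_2_diagram_groups_isomorphism} the folding step does not change the diagram group. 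Hence $H\cong\mathcal{DG}(\mathcal{L}(F_{2n-1}))=F_{2n-1}$. The choice of where to split, namely which block of $n$ letters becomes $x$, has to be made so that the inner vertices of $\mathcal{A}''$ refine the $t$-classes in a controlled way. We will choose the splits so that each $t$-class of inner vertices in the core of $F_n$ corresponds to two vertices of $\mathcal{A}''$ (the old $a_i$ and the new $x_{a_i}$, roughly).

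The remaining properties are checked in order. \emph{Infinite index and no global fixed point:} $\mathcal{A}''$ has more than $n-1$ inner vertices, so $H$ is not closed under the identifications that $[F_n,F_n]$ forces; an index argument as in \cite{GS1} (for example, using that $H\cong F_{2n-1}$ is finitely generated and that finite-index subgroups of $F_n$ contain $[F_n,F_n]$) gives infinite index. Every inner word labels a path in $\mathcal{A}''$ and the inner vertices are recurrent, so elements of $H$ move every point; hence $H$ fixes no point of $(0,1)$. \emph{Conditions $(a)$, $(b)$:} we will exhibit explicit accepted tree diagrams. Pairs of branches $0^k\to0^{k+1}$ and $(n-1)^k\to(n-1)^{k+1}$ realize the slopes at $0$ and $1$, and using the loops at $f$ and $g$ we choose branches realizing every generator of $\mathbb{Z}^n$. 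For each $i$, an element with $s_i0\to s_i$ that fixes $[s_i']$ exists whenever $(s_i0)^+=s_i^+$ is attainable, which we can arrange using a loop $a_i\to a_i$ along the $0$-edge. \emph{Conditions $(c)$, $(d)$:} let $g_1\notin H$. Then the reduced diagram of $g_1$ has a pair of branches $u\to v$ with $u^+\ne v^+$ in $\mathcal{A}''$. Adding it and folding merges two vertices. We must show that any such merge cascades, through type $1$ foldings, into the full identification of the core of $F_n$ (giving $(d)$, via Corollary \ref{c_gen_closure_commutator}), and already at the semi-core level below some inner prefix $w$ (giving $(c)$).

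The main obstacle is this last cascade analysis. For each possible pair of distinct vertices $\{p,q\}$ of $\mathcal{A}''$, we must show that identifying $p$ with $q$ propagates through children to collapse all inner vertices into exactly the $t$-classes, without a type $2$ folding being needed below some fixed prefix. I would first reduce to finitely many cases using the symmetry $a_i\mapsto a_{i+1}$ of the construction. Then I would verify directly that merging any $a_i$-type vertex with any $x$-type vertex, or two vertices with different $t$, forces the chain of identifications. The case where the distinguishing pair involves $r$, $f$ or $g$ can be pushed down into inner vertices by extending $u,v$ by one letter.
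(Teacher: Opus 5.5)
Your overall route is the paper's: split the tree presentation of $\mathcal{L}(F_{2n-1})$ into $n$-letter relations, fold, use Corollary~\ref{c_core_splitting_does_not_change_core} and Guba--Sapir to get a closed $H\cong F_{2n-1}$, then check Corollary~\ref{C_maximal_1}. The gap is that you never fix the splitting, and that choice is the actual content of the proof. Conditions $(b)$, $(c)$, $(d)$ are properties of one specific automaton, and they fail for natural choices.

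For example, take $n=3$ and split every $a_i$-relation uniformly as $a_i=x_i\,a_{i+3}\,a_i$, $x_i=a_ia_{i+1}a_{i+2}$ (indices mod $4$). Fix $v$ with $v^+=a_0$, and let $g_1$ have pairs of branches $v0\to v00$, $v1\to v01$, $v20\to v02$, $v21\to v1$, $v22\to v2$, and be the identity elsewhere. Then $g_1\notin H$. The identifications it forces ($x_0=a_0$, $a_3=a_1$, $x_0=a_2$) close up under foldings of both types with three inner vertices: $\{a_0,a_2,x_0,x_2\}$, $\{a_1,a_3\}$ and $\{x_1,x_3\}$. The last two have children $(x_1,a_0,a_1)$ and $(a_1,a_2,a_3)$, so they never merge. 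By Corollary~\ref{c_gen_closure_commutator}, $\langle H,g_1\rangle\neq F_3$, so this $H$ is not maximal. The cascade therefore cannot be handled by a symmetry reduction over an unspecified construction.

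The paper instead splits the $a_i$-relations non-uniformly, always cutting out the block $a_0\cdots a_{n-1}$ or $a_{n-1}\cdots a_{2n-3}a_0$. This makes all new generators fold into just two vertices $c_0,c_{n-1}$ in the $t$-class of $a_0,a_{n-1}$. The resulting classes are $\{a_0,a_{n-1},c_0,c_{n-1}\}$ and $\{a_i,a_{i+n-1}\}$, not ``old $a_i$ plus new $x_{a_i}$''. The paper then checks on the dependency graph $G(\mathcal{A}_r)$ that every admissible identification propagates by type~$1$ foldings alone to $c_0=c_{n-1}=a_0=a_{n-1}$ and $a_i=a_{i+n-1}$ for all $i$. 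That gives $(c)$ with $w=1$, and $f=f_0$ then follows by a type~$2$ folding, giving $(d)$.

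Your treatment of $(a)$ also falls short for $n\ge 3$. Slopes at $0$ and $1$ only see a $\mathbb{Z}^2$ quotient of $F_n/[F_n,F_n]\cong\mathbb{Z}^n$, and the loops at $f$ and $g$ only control germs at the endpoints. Nothing in your plan produces the other $n-2$ coordinates, which are carried by interior breakpoints. The paper uses explicit accepted elements:
\[
h_0=x_0x_{n-1}x_0^{-1},\qquad h_i=x_0x_ix_{n-1}x_{2n-2}^{-1}x_{n-1}^{-1}x_0^{-1}\ (1\le i\le n-2),\qquad h_{n-1}=x_0^2x_{n-1}x_{2n-2}^{-1}x_{n-1}^{-1}x_0^{-1}.
\]
Their classes are $[x_{n-1}]$, $[x_i][x_{n-1}]^{-1}$ and $[x_0][x_{n-1}]^{-1}$. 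The paper reuses $h_0,\dots,h_{n-2}$ for $(b)$ with $\alpha_0=.0(n-1)$ and $\alpha_i=.0i$. That step again depends on the concrete automaton, since $0i$ and $0(n-1)$ must end at vertices with a $0$-loop. The remaining steps (reducedness, infinite index from $2n>n-1$ inner vertices, no global fixed point) are fine in outline.
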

\begin{proof}
    Let $\mathcal{L}(F_{2n-1})$ be the core of $F_{2n-1}$. Let us look at the semigroup presentation $\mathcal{P}_r$ associated with the core of $F_{2n-1}$. It is generated by the elements $r,f,g,a_0,\dots,a_{2n-3}$, and has the following relations.
    \begin{itemize}
        \item $r= f\cdot a_1\cdots a_{2n-3}\cdot g$.
        \item $f= f\cdot a_1\cdots a_{2n-3}\cdot a_0$.
        \item $g= a_0\cdots a_{2n-3}\cdot g$.
        \item For each $i\in\{0,\dots,2n-3\}$, $a_i=a_i\cdot a_{i+1}\cdots a_i$ (where addition is taken modulo $2n-2$).
    \end{itemize}
    We construct a sequence of semigroup presentations, and the rooted semi tree automata corresponding to those presentations, such that each semigroup presentation is a basic splitting of its predecessor. We then apply foldings of type $2$ to the semi-tree automaton obtained by this process. Instead of writing all the generators and relations of those semigroup presentation, we provide figures illustrating minimal trees of those semigroup presentations. The first presentation in the sequence is $\mathcal{P}_r$ (see Figure \ref{f_Presentation_1}).
\begin{figure}[H]
\centering

\begin{tikzpicture}[grow'=down, level distance=1.4cm,
  level 1/.style={sibling distance=2cm},
  level 2/.style={sibling distance=1.5cm}]

\node {$r$}
    child {node {$g$}
        child{node {$g$}}
        child{node {$a_{2n-3}$}}
        child {node {$\cdots$} edge from parent[draw=none]}
        child{node {$a_0$}}}
    child {node {$a_{2n-3}$}}
    child {node {$\cdots$} edge from parent[draw=none]}
    child {node {$a_{1}$}}
    child {node {$f$}
        child{node {$a_0$}}
        child{node {$a_{2n-3}$}}
        child {node {$\cdots$} edge from parent[draw=none]}
        child{node {$a_1$}}
        child{node {$f$}}};
\end{tikzpicture}
\medskip
\begin{tikzpicture}[grow'=down, level distance=1.4cm,
  level 1/.style={sibling distance=2cm}]
\node {$a_i$}
    child {node {$a_i$}}
    child {node {$\cdots$} edge from parent[draw=none]}
    child {node {$a_{i+1}$}}
    child {node {$a_i$}};
\end{tikzpicture}
\caption{An illustration of minimal tree associated with the core of $F_{2n-1}$.}\label{f_Presentation_1}
\end{figure}

We construct a basic splitting of $\mathcal{P}_r$ by adding a new generator denoted by $f_0$, removing the relation $r= f\cdot a_1\cdots a_{2n-3}\cdot g$, and adding the relations $r=f_0\cdot a_n\cdots a_{2n-3}\cdot g$, and $f_0=f\cdot a_1\cdots a_{n-1}$ (see Figure \ref{f_Presentation_2})
\begin{figure}[H]
\begin{tikzpicture}[grow'=down, level distance=1.4cm,
  level 1/.style={sibling distance=2cm},
  level 2/.style={sibling distance=1.5cm}]

\node {$r$}
    child {node {$g$}
        child{node {$g$}}
        child{node {$a_{2n-3}$}}
        child {node {$\cdots$} edge from parent[draw=none]}
        child{node {$a_0$}}}
    child {node {$a_{2n-3}$}}
    child {node {$\cdots$} edge from parent[draw=none]}
    child {node {$a_n$}}
    child {node[color=red] {$f_0$}
        child{node {$a_{n-1}$}}
        child {node {$\cdots$} edge from parent[draw=none]}
        child{node {$a_1$}}
        child{node {$f$}
            child{node {$a_0$}}
            child{node {$a_{2n-3}$}}
            child {node {$\cdots$} edge from parent[draw=none]}
            child{node {$a_1$}}
            child{node {$f$}}}};
\end{tikzpicture}
\medskip
\centering
\begin{tikzpicture}[grow'=down, level distance=1.4cm,
  level 1/.style={sibling distance=2cm}]
\node {$a_i$}
    child {node {$a_i$}}
    child {node {$\cdots$} edge from parent[draw=none]}
    child {node {$a_{i+1}$}}
    child {node {$a_i$}};
\end{tikzpicture}
\caption{The relation $r= f\cdot a_1\cdots a_{2n-3}\cdot g$ was replaced by the relations $r=f_0\cdot a_n\cdots a_{2n-3}\cdot g$, and $f_0=f\cdot a_1\cdots a_{n-1}$.}\label{f_Presentation_2}
\end{figure}

Next, we construct a basic splitting of the previous presentation by adding a new generator denoted by $c_{n-1}$, removing the relation $f= f\cdot a_1\cdots a_{2n-3}\cdot a_0$, and adding the relations $f=f\cdot a_1\cdots a_{n-2}\cdot c_{n-1}$, and $c_{n-1}=a_{n-1}\cdots a_{2n-3}\cdot a_0$ (see Figure \ref{f_Presentation_3}).

\begin{figure}[H]
\centering

\begin{tikzpicture}[grow'=down, level distance=1.4cm,
  level 1/.style={sibling distance=2cm},
  level 2/.style={sibling distance=1.5cm}]

\node {$r$}
    child {node {$g$}
        child{node {$g$}}
        child{node {$a_{2n-3}$}}
        child {node {$\cdots$} edge from parent[draw=none]}
        child{node {$a_0$}}}
    child {node {$a_{2n-3}$}}
    child {node {$\cdots$} edge from parent[draw=none]}
    child {node {$a_n$}}
    child {node[color=red] {$f_0$}
        child{node {$a_{n-1}$}}
        child {node {$\cdots$} edge from parent[draw=none]}
        child{node {$a_1$}}
        child{node {$f$}
            child{node[color=red] {$c_{n-1}$}
                child{node {$a_0$}}
                child{node {$a_{2n-3}$}}
                child{node {$\cdots$} edge from parent[draw=none]}
                child{node {$a_{n-1}$}}}
            child{node {$a_{n-2}$}}
            child {node {$\cdots$} edge from parent[draw=none]}
            child{node {$a_1$}}
            child{node {$f$}}}};
\end{tikzpicture}
\medskip
\begin{tikzpicture}[grow'=down, level distance=1.4cm,
  level 1/.style={sibling distance=2cm}]
\node {$a_i$}
    child {node {$a_i$}}
    child {node {$\cdots$} edge from parent[draw=none]}
    child {node {$a_{i+1}$}}
    child {node {$a_i$}};
\end{tikzpicture}
\caption{The relation $f= f\cdot a_1\cdots a_{2n-3}\cdot a_0$ was replaced by the relations $f=f\cdot a_1\cdots a_{n-2}\cdot c_{n-1}$, and $c_{n-1}=a_{n-1}\cdots a_{2n-3}\cdot a_0$}\label{f_Presentation_3}
\vspace{1.2cm}
\end{figure}
Next, we construct a basic splitting of the previous presentation by adding a new generator denoted by $c_0$, removing the relation $g= a_0\cdots a_{2n-3} \cdot g$, and adding the relations $g=c_0\cdot a_n\cdots a_{2n-3}\cdot g$, and $c_0=a_0\cdots a_{n-1}$ (see Figure \ref{f_Presentation_4}).
\begin{figure}[H]
\centering
\begin{tikzpicture}[grow'=down, level distance=1.4cm,
  level 1/.style={sibling distance=2cm},
  level 2/.style={sibling distance=1.5cm}]

\node {$r$}
    child {node {$g$}
        child{node {$g$}}
        child{node {$a_{2n-3}$}}
        child {node {$\cdots$} edge from parent[draw=none]}
        child{node[color=red] {$c_0$}
            child{ node{$a_{n-1}$}}
            child {node {$\cdots$} edge from parent[draw=none]}
            child{ node{$a_0$}}}}
    child {node {$a_{2n-3}$}}
    child {node {$\cdots$} edge from parent[draw=none]}
    child {node {$a_n$}}
    child {node[color=red] {$f_0$}
        child{node {$a_{n-1}$}}
        child {node {$\cdots$} edge from parent[draw=none]}
        child{node {$a_1$}}
        child{node {$f$}
            child{node {$c_{n-1}$}
                child{node {$a_0$}}
                child{node {$a_{2n-3}$}}
                child{node {$\cdots$} edge from parent[draw=none]}
                child{node {$a_{n-1}$}}}
            child{node {$a_{n-2}$}}
            child {node {$\cdots$} edge from parent[draw=none]}
            child{node {$a_1$}}
            child{node {$f$}}}};
\end{tikzpicture}
\medskip
\begin{tikzpicture}[grow'=down, level distance=1.4cm,
  level 1/.style={sibling distance=2cm}]
\node {$a_i$}
    child {node {$a_i$}}
    child {node {$\cdots$} edge from parent[draw=none]}
    child {node {$a_{i+1}$}}
    child {node {$a_i$}};
\end{tikzpicture}
\caption{The relation $g= a_0\cdots a_{2n-3} \cdot g$ was replaced by the relations $g=c_0\cdot a_n\cdots a_{2n-3}\cdot g$, and $c_0=a_0\cdots a_{n-1}$.}\label{f_Presentation_4}
\vspace{1.2cm}
\end{figure}
We now construct a splitting of the previous presentation by adding new generators denoted by $b_0,\dots,b_{2n-3}$. For each $i\in\{0,\dots,2n-3\}$, we remove the relation $a_i= a_i \cdot a_{i+1} \cdots a_i$. We add the following relations (see Figure \ref{f_Presentation_5}).
\begin{itemize}
    \item $a_0=b_0\cdot a_n\cdots a_{2n-3}\cdot a_0$ and $b_0=a_0\cdots a_{n-1}$.
    \item $a_i=a_i\cdots a_{n-2}\cdot b_i\cdot a_1\cdots a_i$ and $b_i=a_{n-1}\cdots a_{2n-3}\cdot a_0$, for $i\in\{1,\dots,n-2\}$.
    \item $a_{n-1}=a_{n-1}\cdots a_{2n-3}\cdot b_{n-1}$ and $b_{n-1}=a_0\cdots a_{n-1}$.
    \item $a_i=a_i\cdots a_{2n-3}\cdot b_i\cdot a_n\cdots a_i$ and $b_i=a_0\cdots a_{n-1}$, for $i\in\{n,\dots,2n-3\}$.
\end{itemize}
\begin{figure}[H]
\begin{minipage}{0.45\textwidth}
\centering
\begin{tikzpicture}[grow'=down, level distance=1.4cm,
  level 1/.style={sibling distance=1.8cm},
  level 2/.style={sibling distance=1.2cm},
  level 3/.style={sibling distance=0.8cm}]
\node {$r$}
    child {node {$g$}
        child{node {$g$}}
        child{node {$a_{2n-3}$}}
        child {node {$\cdots$} edge from parent[draw=none]}
        child{node {$a_n$}}
        child{node {$c_0$}
            child{node {$a_{n-1}$}}
            child{node {$\cdots$} edge from parent[draw=none]}
            child{node {$a_{0}$}}}}
    child {node {$a_{2n-3}$}}
    child {node {$\cdots$} edge from parent[draw=none]}
    child {node {$a_{n}$}}
    child {node {$f_0$}
        child{node {$a_{n-1}$}}
        child {node {$\cdots$} edge from parent[draw=none]}
        child{node {$a_1$}}
        child{node {$f$}
            child{node {$c_{n-1}$}
                child{node {$a_0$}}
                child{node {$a_{2n-3}$}}
                child{node {$\cdots$} edge from parent[draw=none]}
                child{node {$a_{n-1}$}}}
            child{node {$a_{n-2}$}}
            child {node {$\cdots$} edge from parent[draw=none]}
            child{node {$a_1$}}
            child{node {$f$}}}};
\end{tikzpicture}
\end{minipage}
\vspace{1.2cm}
\end{figure}

\begin{figure}[H]
\begin{minipage}{0.45\textwidth}
\centering
\begin{tikzpicture}[grow'=down, baseline=(current bounding box.north), level distance=1.4cm,
  level 1/.style={sibling distance=1cm}]
\node {$a_0$}
    child {node {$a_0$}}
    child {node {$a_{2n-3}$}}
    child {node {$\cdots$} edge from parent[draw=none]}
    child {node {$a_n$}}
    child {node[color=red] {$b_0$}
        child{node {$a_{n-1}$}}
        child{node {$\cdots$} edge from parent[draw=none]}
        child{node {$a_{0}$}}};
\end{tikzpicture}
\end{minipage}%
\hfill
\begin{minipage}{0.45\textwidth}
\centering
\begin{tikzpicture}[grow'=down, level distance=1.4cm,
  level 1/.style={sibling distance=1cm}]
\node {$a_{n-1}$}
    child {node[color=red] {$b_{n-1}$}
        child{node {$a_{n-1}$}}
        child{node {$\cdots$} edge from parent[draw=none]}
        child{node {$a_{0}$}}}
    child {node {$a_{2n-3}$}}
    child {node {$\cdots$} edge from parent[draw=none]}
    child {node {$a_{n-1}$}};
\end{tikzpicture}
\end{minipage}
\vspace{1.2cm}
\begin{minipage}{0.45\textwidth}
\centering
\begin{tikzpicture}[grow'=down, level distance=1.4cm,
  level 1/.style={sibling distance=1cm}]
\node {$a_i$}
    child {node {$a_i$}}
    child {node {$\cdots$} edge from parent[draw=none]}
    child {node {$a_1$}}
    child {node[color=red] {$b_i$}
        child{node {$a_0$}}
        child{node {$a_{2n-3}$}}
        child{node {$\cdots$} edge from parent[draw=none]}
        child{node {$a_{n-1}$}}}
    child {node {$a_{n-2}$}}
    child {node {$\cdots$} edge from parent[draw=none]}
    child {node {$a_i$}};
\end{tikzpicture}
\caption*{For $1 \le i \le n-2$}
\end{minipage}%
\hfill
\hspace{1.2cm}
\begin{minipage}{0.45\textwidth}
\centering
\begin{tikzpicture}[grow'=down, level distance=1.4cm,
  level 1/.style={sibling distance=1cm}]
\node {$a_i$}
    child {node {$a_i$}}
    child {node {$\cdots$} edge from parent[draw=none]}
    child {node {$a_n$}}
    child {node[color=red] {$b_i$}
        child{node {$a_{n-1}$}}
        child{node {$\cdots$} edge from parent[draw=none]}
        child{node {$a_{0}$}}}
    child {node {$a_{2n-3}$}}
    child {node {$\cdots$} edge from parent[draw=none]}
    child {node {$a_i$}};
\end{tikzpicture}
\caption*{For $n \le i \le 2n-3$}
\end{minipage}
\caption{The relations $a_i=a_i\cdot a_{i+1}\cdots a_i$, where $i\in\{0,\dots,2n-3\}$ were removed, and new relations and generators were added.}\label{f_Presentation_5}
\end{figure}

We conclude the sequence of splittings here and proceed to apply foldings of type $2$ to the automaton corresponding to the presentation. Note that the children of $c_0,b_0,b_{n-1},\dots,b_{2n-3}$ are the same, and that the children of $c_{n-1},b_1,\dots,b_{n-2}$ are the same. We apply foldings of type $2$, to identify $c_0,b_0,b_{n-1},\dots,b_{2n-3}$ to a vertex denoted by $c_0$. We also apply foldings of type $2$ to identify $c_{n-1},b_1,\dots,b_{n-2}$ to a vertex denoted by $c_{n-1}$ (see Figure \ref{f_Presentation_6}).

\newpage
\begin{figure}[H]
\begin{minipage}{0.45\textwidth}
\centering
\begin{tikzpicture}[grow'=down, level distance=1.4cm,
  level 1/.style={sibling distance=1.8cm},
  level 2/.style={sibling distance=1.2cm},
  level 3/.style={sibling distance=0.8cm}]
\node {$r$}
    child {node {$g$}
        child{node {$g$}}
        child{node {$a_{2n-3}$}}
        child {node {$\cdots$} edge from parent[draw=none]}
        child{node {$a_n$}}
        child{node[color=blue] {$c_0$}
            child{node {$a_{n-1}$}}
            child{node {$\cdots$} edge from parent[draw=none]}
            child{node {$a_{0}$}}}}
    child {node {$a_{2n-3}$}}
    child {node {$\cdots$} edge from parent[draw=none]}
    child {node {$a_{n}$}}
    child {node {$f_0$}
        child{node {$a_{n-1}$}}
        child {node {$\cdots$} edge from parent[draw=none]}
        child{node {$a_1$}}
        child{node {$f$}
            child{node[color=red] {$c_{n-1}$}
                child{node {$a_0$}}
                child{node {$a_{2n-3}$}}
                child{node {$\cdots$} edge from parent[draw=none]}
                child{node {$a_{n-1}$}}}
            child{node {$a_{n-2}$}}
            child {node {$\cdots$} edge from parent[draw=none]}
            child{node {$a_1$}}
            child{node {$f$}}}};
\end{tikzpicture}
\end{minipage}
\end{figure}
\begin{figure}[H]
\ContinuedFloat
\begin{minipage}{0.45\textwidth}
\centering
\begin{tikzpicture}[grow'=down, baseline=(current bounding box.north), level distance=1.4cm,
  level 1/.style={sibling distance=1cm}]
\node {$a_0$}
    child {node {$a_0$}}
    child {node {$a_{2n-3}$}}
    child {node {$\cdots$} edge from parent[draw=none]}
    child {node {$a_n$}}
    child {node[color=blue] {$b_0$}
        child{node {$a_{n-1}$}}
        child{node {$\cdots$} edge from parent[draw=none]}
        child{node {$a_{0}$}}};
\end{tikzpicture}
\end{minipage}%
\hfill
\begin{minipage}{0.45\textwidth}
\centering
\begin{tikzpicture}[grow'=down, level distance=1.4cm,
  level 1/.style={sibling distance=1cm}]
\node {$a_{n-1}$}
    child {node[color=blue] {$b_{n-1}$}
        child{node {$a_{n-1}$}}
        child{node {$\cdots$} edge from parent[draw=none]}
        child{node {$a_{0}$}}}
    child {node {$a_{2n-3}$}}
    child {node {$\cdots$} edge from parent[draw=none]}
    child {node {$a_{n-1}$}};
\end{tikzpicture}
\end{minipage}
\vspace{1.2cm}
\begin{minipage}{0.45\textwidth}
\centering
\begin{tikzpicture}[grow'=down, level distance=1.4cm,
  level 1/.style={sibling distance=1cm}]
\node {$a_i$}
    child {node {$a_i$}}
    child {node {$\cdots$} edge from parent[draw=none]}
    child {node {$a_1$}}
    child {node[color=red] {$b_i$}
        child{node {$a_0$}}
        child{node {$a_{2n-3}$}}
        child{node {$\cdots$} edge from parent[draw=none]}
        child{node {$a_{n-1}$}}}
    child {node {$a_{n-2}$}}
    child {node {$\cdots$} edge from parent[draw=none]}
    child {node {$a_i$}};
\end{tikzpicture}
\caption*{For $1 \le i \le n-2$}
\end{minipage}%
\hfill
\hspace{1.2cm}
\begin{minipage}{0.45\textwidth}
\centering
\begin{tikzpicture}[grow'=down, level distance=1.4cm,
  level 1/.style={sibling distance=1cm}]
\node {$a_i$}
    child {node {$a_i$}}
    child {node {$\cdots$} edge from parent[draw=none]}
    child {node {$a_n$}}
    child {node[color=blue] {$b_i$}
        child{node {$a_{n-1}$}}
        child{node {$\cdots$} edge from parent[draw=none]}
        child{node {$a_{0}$}}}
    child {node {$a_{2n-3}$}}
    child {node {$\cdots$} edge from parent[draw=none]}
    child {node {$a_i$}};
\end{tikzpicture}
\captionsetup{labelformat=empty}
\caption{For $n \le i \le 2n-3$}
\end{minipage}
\caption{$c_0,b_0,b_{n-1},\dots,b_{2n-3}$ are identified by foldings of type $2$ to a vertex denoted by $c_0$. $c_{n-1},b_1,\dots,b_{n-2}$ are identified by foldings of type $2$ to a vertex denoted by $c_{n-1}$.}\label{f_Presentation_6}
\end{figure}
\newpage
Let $\mathcal{Q}_r$ be the resulting semigroup presentation. It is generated by the elements $r,f,f_0,g,a_0,\dots,a_{2n-3},c_0,c_{n-1}$, and has the following relations.
    \begin{itemize}
        \item $r= f_0\cdot a_n\cdots a_{2n-3}\cdot g$.
        \item $f_0=f\cdot a_1\cdots a_{n-1}$.
        \item $f= f\cdot a_1\cdots a_{n-2}\cdot c_{n-1}$.
        \item $g=c_0\cdot a_n\cdots a_{2n-3}\cdot g$.
        \item $c_0= a_0\cdots a_{n-1}$.
        \item $c_{n-1}= a_{n-1}\cdots a_{2n-3}\cdot a_0$.
        \item $a_0= c_0\cdot a_n\cdots a_{2n-3}\cdot a_0$.
        \item $a_i= a_i\cdots a_{n-2}\cdot c_{n-1}\cdot a_1\cdots a_i$ for $1\le i\le n-2$
        \item $a_{n-1}= a_{n-1}\cdots a_{2n-3}\cdot c_0$.
        \item $a_i= a_i\cdots a_{2n-3}\cdot c_0\cdot a_n\cdots a_i$ for $n\le i\le 2n-3$.
    \end{itemize}
    Note that $\mathcal{Q}_r$ was obtained by applying foldings of type $2$ to a splitting of $\mathcal{P}_r$.

    Let $\mathcal{A}_r:=\mathcal{A}(\mathcal{Q}_r)$ (see Figure \ref{f A_r and the core of F_{2n-1}}). Note that $\mathcal{A}_r$ is an $n$-ary tree automaton. Let $H$ be the subgroup of $F_n$ accepted by $\mathcal{A}_r$. Note that $\mathcal{A}_r$ is reduced (since, for example, there is a non-empty path from every vertex of $\mathcal{A}_r$ to $a_0)$. So  by Corollary \ref{c_core_splitting_does_not_change_core}, $\mathcal{A}_r$ is a core automaton.

Guba and Sapir \cite{GubaSapir1997} showed that certain changes to a semigroup presentation do not change the isomorphism type of diagram groups associated with it. In particular, they showed that the diagram group of a semi tree semigroup presentation and the diagram group of a splitting of the presentation are isomorphic. Combining their result with Lemma \ref{l_core_foldings_type_2_diagram_groups_isomorphism}, we conclude that $H$ is isomorphic to $F_{2n-1}$.

We will show that the following hold:
    \begin{itemize}
        \item[$(1)$]$H$ is a strict subgroup of $F_n$, and conditions $(a)-(d)$ of Corollary \ref{C_maximal_1} hold for $H$.
        \item[$(2)$]$H$ does not fix any point in the open unit interval, and it is a subgroup of infinite index of $F_n$.
    \end{itemize}
    Note that after we show $(1)$ and $(2)$, we are done.
\begin{figure}[H]
\begin{minipage}{0.45\textwidth}
\centering
\begin{tikzpicture}[grow'=down, level distance=1.4cm,
  level 1/.style={sibling distance=1.8cm},
  level 2/.style={sibling distance=1.2cm},
  level 3/.style={sibling distance=0.8cm}]
\node {$r$}
    child {node {$g$}
        child{node {$g$}}
        child{node {$a_{2n-3}$}}
        child {node {$\cdots$} edge from parent[draw=none]}
        child{node {$a_n$}}
        child{node {$c_0$}
            child{node {$a_{n-1}$}}
            child{node {$\cdots$} edge from parent[draw=none]}
            child{node {$a_{0}$}}}}
    child {node {$a_{2n-3}$}}
    child {node {$\cdots$} edge from parent[draw=none]}
    child {node {$a_{n}$}}
    child {node {$f_0$}
        child{node {$a_{n-1}$}}
        child {node {$\cdots$} edge from parent[draw=none]}
        child{node {$a_1$}}
        child{node {$f$}
            child{node {$c_{n-1}$}
                child{node {$a_0$}}
                child{node {$a_{2n-3}$}}
                child{node {$\cdots$} edge from parent[draw=none]}
                child{node {$a_{n-1}$}}}
            child{node {$a_{n-2}$}}
            child {node {$\cdots$} edge from parent[draw=none]}
            child{node {$a_1$}}
            child{node {$f$}}}};
\end{tikzpicture}
\end{minipage}
\vspace{1.2cm}
\end{figure}
\begin{figure}[H]
\ContinuedFloat
\begin{minipage}{0.45\textwidth}
\centering
\begin{tikzpicture}[grow'=down, baseline=(current bounding box.north), level distance=1.4cm,
  level 1/.style={sibling distance=1cm}]
\node {$a_0$}
    child {node {$a_0$}}
    child {node {$a_{2n-3}$}}
    child {node {$\cdots$} edge from parent[draw=none]}
    child {node {$a_n$}}
    child {node {$c_0$}};
\end{tikzpicture}
\end{minipage}%
\hfill
\begin{minipage}{0.45\textwidth}
\centering
\begin{tikzpicture}[grow'=down, level distance=1.4cm,
  level 1/.style={sibling distance=1cm}]
\node {$a_{n-1}$}
    child {node {$c_0$}}
    child {node {$a_{2n-3}$}}
    child {node {$\cdots$} edge from parent[draw=none]}
    child {node {$a_{n-1}$}};
\end{tikzpicture}
\end{minipage}
\vspace{1.2cm}
\begin{minipage}{0.45\textwidth}
\centering
\begin{tikzpicture}[grow'=down, level distance=1.4cm,
  level 1/.style={sibling distance=1cm}]
\node {$a_i$}
    child {node {$a_i$}}
    child {node {$\cdots$} edge from parent[draw=none]}
    child {node {$a_1$}}
    child {node {$c_{n-1}$}}
    child {node {$a_{n-2}$}}
    child {node {$\cdots$} edge from parent[draw=none]}
    child {node {$a_i$}};
\end{tikzpicture}
\caption*{For $1 \le i \le n-2$}
\end{minipage}%
\hfill
\begin{minipage}{0.45\textwidth}
\centering
\begin{tikzpicture}[grow'=down, level distance=1.4cm,
  level 1/.style={sibling distance=1cm}]
\node {$a_i$}
    child {node {$a_i$}}
    child {node {$\cdots$} edge from parent[draw=none]}
    child {node {$a_n$}}
    child {node {$c_{0}$}}
    child {node {$a_{2n-3}$}}
    child {node {$\cdots$} edge from parent[draw=none]}
    child {node {$a_i$}};
\end{tikzpicture}
\captionsetup{labelformat=empty}
\caption{For $n \le i \le 2n-3$}
\end{minipage}
\caption{This figure illustrates a minimal tree of the $n$-ary tree automaton $\mathcal{A}_r$. Note that $\mathcal{A}_r$ is obtained by applying foldings of type $2$ to a splitting of $\mathcal{L}(F_{2n-1})$, so $\mathcal{A}_r$ is a rooted $n$-ary tree automaton, and it is a core automaton.}\label{f A_r and the core of F_{2n-1}}
\end{figure}

    We show condition $(1)$ holds. Any element of $F_n$ which has the pair of branches $0\rightarrow 00$ (such an element clearly exists) is not accepted by $\mathcal{A}_r$. Therefore, $H\neq F_n$, so $H$ is a strict subgroup of $F_n$.

    Note that since $\mathcal{A}_r$ is a core automaton, it follows that $\mathcal{L}(H)$ is equal to $\mathcal{A}_r$. So $H$ equals to the subgroup of $F_n$ accepted by $\mathcal{L}(H)$, which is $\Cl(H)$.

    We now show that condition $(a)$ holds. We need to show that $\pi_{ab}(H)=\Z^n$. It is enough to find elements $h_0,\dots,h_{n-1}$ of $F_n$ that are accepted by $\mathcal{A}_r$, such that their images under the abelianization map generate $\Z^n$. Let $x_0,\dots,x_{n-1}$ be the standard generators of $F_n$. We define the following elements. \begin{itemize}
        \item $h_0=x_0x_{n-1}x_0^{-1}$.
        \item for each $1\le i\le n-2$, $h_i=x_0x_ix_{n-1}x_{2n-2}^{-1}x_{n-1}^{-1}x_0^{-1}$.
        \item $h_{n-1}=x_0^2x_{n-1}x_{2n-2}^{-1}x_{n-1}^{-1}x_0^{-1}$.
    \end{itemize}
    Note that they are accepted by $\mathcal{A}_r$. See Figure \ref{elements for abelianization} for tree-diagrams of these elements.
    Consider the following presentation of $F_n$: $$\langle x_0,x_1,\dots|x_j^{x_i}=x_{j+n-1} \text{ for }i<j\rangle$$ For natural $i\ge 0$, denote $[x_i]$ the coset of $x_i$ in the abelianization. Recall that for $i\ge1$, $[x_i]=[x_{i+n-1}]$, so the abelianization of $F_n$ is generated by $[x_0],\dots,[x_{n-1}]$. Note that
    \begin{itemize}
        \item $[h_0]=[x_{n-1}]$.
        \item for each $1\le i\le n-2$, $[h_i]=[x_i][x_{n-1}^{-1}]$.
        \item $[h_{n-1}]=[x_0][x_{n-1}^{-1}]$.
    \end{itemize}
    So $[h_0],\dots,[h_{n-1}]$ generate the abelianization, and condition $(a)$ holds.
\begin{figure}[H]
\centering

\begin{minipage}[t]{0.48\textwidth}
\centering
\begin{tikzpicture}[grow'=down,
  baseline=(current bounding box.north), 
  level distance=1.6cm,
  every node/.style={minimum size=5mm,inner sep=0pt},
  level 1/.style={sibling distance=1cm},
  level 2/.style={sibling distance=1.2cm},
  level 3/.style={sibling distance=0.8cm},
  ]

\node {$r$}
    child {node {$g$}}
    child {node {$a_{2n-3}$}}
    child {node {$\cdots$} edge from parent[draw=none]}
    child {node {$a_n$}}
    child {node {$f_0$}
        child{node {$a_{n-1}$}
            child {node {$c_0$}}
            child {node {$a_{2n-3}$}}
            child {node {$\cdots$} edge from parent[draw=none]}
            child {node {$a_{n-1}$}}}
        child {node {$\cdots$} edge from parent[draw=none]}
        child{node {$a_1$}}
        child{node {$f$}}};
\end{tikzpicture}
\end{minipage}%
\hfill
\begin{minipage}[t]{0.48\textwidth}
\centering
\begin{tikzpicture}[grow'=down,
  baseline=(current bounding box.north), 
  level distance=1.6cm,
  every node/.style={minimum size=5mm,inner sep=0pt},
  level 1/.style={sibling distance=1cm},
  level 2/.style={sibling distance=0.8cm},
  ]

\node {$r$}
    child {node {$g$}
        child{node {$g$}}
        child{node {$a_{2n-3}$}}
        child {node {$\cdots$} edge from parent[draw=none]}
        child{node {$a_n$}}
        child{node {$c_0$}}}
    child {node {$a_{2n-3}$}}
    child {node {$\cdots$} edge from parent[draw=none]}
    child {node {$a_n$}}
    child {node {$f_0$}
        child{node {$a_{n-1}$}}
        child {node {$\cdots$} edge from parent[draw=none]}
        child{node {$a_1$}}
        child{node {$f$}}};
\end{tikzpicture}
\end{minipage}%
\hfill
\caption*{A tree diagram of the element $h_0$}
\vspace{1.2cm}
\centering
\begin{minipage}[t]{0.48\textwidth}
\centering
\begin{tikzpicture}[grow'=down,
  baseline=(current bounding box.north), 
  level distance=1.6cm,
  every node/.style={minimum size=5mm,inner sep=0pt},
  level 1/.style={sibling distance=1cm},
  level 2/.style={sibling distance=1.2cm},
  level 3/.style={sibling distance=0.8cm},
  ]

\node {$r$}
    child {node {$g$}}
    child {node {$a_{2n-3}$}}
    child {node {$\cdots$} edge from parent[draw=none]}
    child {node {$a_n$}}
    child {node {$f_0$}
        child{node {$a_{n-1}$}}
        child {node {$\cdots$} edge from parent[draw=none]}
        child{node {$a_i$}
            child {node {$a_i$}}
            child {node {$\cdots$} edge from parent[draw=none]}
            child {node {$a_1$}}
            child {node {$c_{n-1}$}
                child{node {$a_0$}}
                child{node {$a_{2n-3}$}}
                child{node {$\cdots$} edge from parent[draw=none]}
                child{node {$a_{n-1}$}}}
            child {node {$a_{n-2}$}}
            child {node {$\cdots$} edge from parent[draw=none]}
            child {node {$a_i$}}}
        child {node {$\cdots$} edge from parent[draw=none]}
        child{node {$a_1$}}
        child{node {$f$}}};
\end{tikzpicture}
\end{minipage}%
\hfill
\begin{minipage}[t]{0.48\textwidth}
\centering
\begin{tikzpicture}[grow'=down,
  baseline=(current bounding box.north), 
  level distance=1.6cm,
  every node/.style={minimum size=5mm,inner sep=0pt},
  level 1/.style={sibling distance=1cm},
  level 2/.style={sibling distance=1.2cm},
  level 3/.style={sibling distance=0.8cm},
  ]

\node {$r$}
    child {node {$g$}}
    child {node {$a_{2n-3}$}}
    child {node {$\cdots$} edge from parent[draw=none]}
    child {node {$a_n$}}
    child {node {$f_0$}
        child{node {$a_{n-1}$}
            child {node {$c_0$}
                child{node {$a_{n-1}$}}
                child{node {$\cdots$} edge from parent[draw=none]}
                child{node {$a_{0}$}}}
            child {node {$a_{2n-3}$}}
            child {node {$\cdots$} edge from parent[draw=none]}
            child {node {$a_{n-1}$}}}
        child {node {$\cdots$} edge from parent[draw=none]}
        child{node {$a_1$}}
        child{node {$f$}}};
\end{tikzpicture}
\end{minipage}%
\caption*{A tree diagram of the element $h_i$ for $1\le i\le n-2$}
\end{figure}
\begin{figure}[H]
\ContinuedFloat
\begin{minipage}[t]{0.48\textwidth}
\centering
\begin{tikzpicture}[grow'=down,
  baseline=(current bounding box.north), 
  level distance=1.6cm,
  every node/.style={minimum size=5mm,inner sep=0pt},
  level 1/.style={sibling distance=1cm},
  level 2/.style={sibling distance=1.2cm},
  level 3/.style={sibling distance=0.8cm},
  ]

\node {$r$}
    child {node {$g$}}
    child {node {$a_{2n-3}$}}
    child {node {$\cdots$} edge from parent[draw=none]}
    child {node {$a_n$}}
    child {node {$f_0$}
        child{node {$a_{n-1}$}}
        child {node {$\cdots$} edge from parent[draw=none]}
        child{node {$a_1$}}
        child{node {$f$}
            child{node {$c_{n-1}$}
                child{node {$a_{0}$}}
                child{node {$a_{2n-3}$}}
                child{node {$\cdots$} edge from parent[draw=none]}
                child{node {$a_{n-1}$}}}
            child{node {$a_{n-2}$}}
            child {node {$\cdots$} edge from parent[draw=none]}
            child{node {$a_1$}}
            child{node {$f$}}}};
\end{tikzpicture}
\end{minipage}%
\begin{minipage}[t]{0.48\textwidth}
\centering
\begin{tikzpicture}[grow'=down,
  baseline=(current bounding box.north), 
  level distance=1.6cm,
  every node/.style={minimum size=5mm,inner sep=0pt},
  level 1/.style={sibling distance=1cm},
  level 2/.style={sibling distance=1.2cm},
  level 3/.style={sibling distance=0.8cm},
  ]

\node {$r$}
    child {node {$g$}}
    child {node {$a_{2n-3}$}}
    child {node {$\cdots$} edge from parent[draw=none]}
    child {node {$a_n$}}
    child {node {$f_0$}
        child{node {$a_{n-1}$}
            child {node {$c_0$}
                child{node {$a_{n-1}$}}
                child{node {$\cdots$} edge from parent[draw=none]}
                child{node {$a_{0}$}}}
            child {node {$a_{2n-3}$}}
            child {node {$\cdots$} edge from parent[draw=none]}
            child {node {$a_{n-1}$}}}
        child {node {$\cdots$} edge from parent[draw=none]}
        child{node {$a_1$}}
        child{node {$f$}}};
\end{tikzpicture}
\end{minipage}
\captionsetup{labelformat=empty}
\caption{A tree diagram of the element $h_{n-1}$}
\captionsetup{labelformat=default}
\caption{Tree-diagrams of the elements $h_0,\dots,h_{n-1}$. Each vertex in those tree-diagrams is labeled by its appropriate label in $\mathcal{T}_{\mathcal{A}_r}$. Note that every vertex is labeled, and that the end-vertices of each pair of branches share a label, so the elements are accepted by $\mathcal{A}_r$.}
\label{elements for abelianization}
\end{figure}

    We move onto condition $(b)$. We take the elements $h_0,\dots,h_{n-2}\in H$. Let $\alpha_0=.0(n-1)=\frac{n-1}{n^2}$, and for $1\le i\le n-2$, let $\alpha_i=.0i=\frac{i}{n^2}$.
    Note that for each $i\in\{0,\dots,n-2\}$, we have $\alpha_i\in D_{n,i}$, and $h_i(\alpha_i)=\alpha_i,h_i'(\alpha_i^-)=1,h_i'(\alpha_i^+)=n$, so condition $(b)$ holds.

    We move to conditions $(c)$ and $(d)$. Let $g_1\in F_n\setminus H$. Recall that $H=\Cl(H)$ and that every $n$-ary word labels a path on $\mathcal{L}(H)$. So there are $n$-ary words such that $g_1$ has a pair of branches $u\rightarrow v$ such that $u^+\neq v^+$ on $\mathcal{L}(H)$. Therefore, in $\mathcal{L}_{sem}(H\cup\{g_1\})$, we identify the vertices $u^+$ and $v^+$. Note that in particular, $t(u)=t(v)$. Also note that since for any $k\in\N\setminus\{0\}$, $((n-1)^k)^+=g$, neither $u$ nor $v$ can be of the form $(n-1)^k$ (since if one of them is of this form, the other must be of the form $(n-1)^\ell$ and then  they both end in the vertex $g$ in $\mathcal{A}_r$). Also, if $u$ or $v$ are of the form $0^k$ for some $k\in\N\setminus\{0\}$, then we must have $\{u^+,v^+\}=\{f,f_0\}$. So $u^+\neq v^+$, and one of the following must hold:
    \begin{itemize}
        \item[$(1)$] $u^+,v^+\in\{f_0,f\}$
        \item[$(2)$] $u^+,v^+\in\{a_0,a_{n-1},c_0,c_{n-1}\}$
        \item[$(3)$] $u^+,v^+\in\{a_i,a_{i+n-1}\}$ for some $1\le i \le n-2$
    \end{itemize}
    Consider $\mathcal{L}_{sem}(H\cup\{g_1\})$. One can construct it by taking $\mathcal{L}(H)$, identifying $u^+$ and $v^+$, and the other end vertices of the pairs of branches of $g_1$, and apply foldings of type $1$. We will show that in each case between $(1),(2),(3)$, there is an $n$-ary word $w$ such that for every two $n$-ary words $w_1,w_2$ such that $t(w_1)=t(w_2)$, the words $ww_1,ww_2$ have $(ww_1)^+=(ww_2)^+$ on $\mathcal{L}_{sem}(H\cup\{g_1\})$, showing condition $(c)$ holds. We will also show that after applying foldings of type $2$ as well, we get the core of $F_n$, proving condition $(d)$ holds.

    We now construct a directed graph $G(\mathcal{A}_r)=(V,E)$ that simulates the identifications of vertices in the automaton after foldings of type $1$. The set of vertices is the set of all possible pairs of vertices we identify. Namely,
    \begin{align*}
        V=\{(f_0,f),(c_0,c_{n-1}),(c_0,a_0),(c_0,a_{n-1}),(c_{n-1},a_0),\\(c_{n-1},a_{n-1}),(a_0,a_{n-1}),(a_1,a_n),\dots,(a_{n-2},a_{2n-3})\}
    \end{align*}We add an edge from the vertex $(u_1,v_1)$ to the vertex $(u_2,v_2)$ if the identification of $(u_2,v_2)$ follows from the identification of $(u_1,v_1)$ by applying a single folding of type $1$. Let us first consider the case where $n\ge3$.
    See Figure \ref{f_maximal_dependencies_graph} for the graph.

    Note that there is a path between any vertex in $V$ to any vertex in \\$V\setminus\{(c_{n-1},a_{n-1}),(c_{n-1},a_0),(f,f_0)\}$. So in any case between $(1),(2),(3)$, the identification $u^+=v^+$ will result in the identifications of $c_0=c_{n-1}=a_0=a_{n-1}$ and $a_i=a_{i+n-1}$ for each $i\in\{1,\dots,n-2\}$ after some foldings of type $1$. Therefore $\mathcal{L}_{sem}(H\cup\{g_1\})$ has exactly $n-1$ inner vertices. Since every $n$-ary word labels a path on $\mathcal{L}(H)$, every $n$-ary word also labels a path on $\mathcal{L}_{sem}(H\cup\{g_1\})$. In particular, for the word $w=1$, for every $n$-ary words $w_1,w_2$ such that $t(w_1)=t(w_2)$ it holds that $ww_1,ww_2$ label paths on $\mathcal{L}_{sem}(H\cup\{g_1\})$ and $(ww_1)^+=(ww_2)^+$ on $\mathcal{L}_{sem}(H\cup\{g_1\})$. So condition $(c)$ is satisfied.

    Also note that the children of $f$ and $f_0$ will be the same in $\mathcal{L}_{sem}(H\cup\{g_1\})$, so $f$ and $f_0$ will be identified in $\mathcal{L}(H\cup\{g_1\})$ by a folding of type $2$. So $\mathcal{L}(H\cup\{g_1\})=\mathcal{L}(F_n)$, so it accepts the derived subgroup of $F_n$, and condition $(d)$ holds.

    Finally, if $n=2$, we get that $a_0,a_{n-1}$ and $c_0$ must be identified after applying foldings of type $1$. The children of $a_0$ and $c_{n-1}$ will be the same, so after applying foldings of type $2$ as well, the new automaton will accept $[F_n,F_n]$, so condition $(d)$ holds. By results of Golan (\cite{G1}) for the case of $F_2$, we do not have to show condition $(c)$, and we will get that $\langle H,g\rangle=F_2$, so we are done.

\begin{figure}[H]
    \centering
    \resizebox{1\linewidth}{!}{
    \begin{tikzpicture}[node distance={15mm}, thick, main/.style = {draw, circle,align=center}] 

    \node at (-8,0) {}; 
    \node at (8,0) {};  

    \node[main,minimum size=1 cm] (1) {$c_0=c_{n-1}$}; 
    \node[main,minimum size=1 cm] (2) [below=of 1] {$c_0=a_0$}; 
    \node[main,minimum size=1 cm] (3) [below=of 2] {$c_0=a_{n-1}$}; 
    \node[main,minimum size=1 cm] (4) [left=of 1] {$a_0=a_{n-1}$}; 
    \node[main,minimum size=1 cm] (5) [left=of 2] {$c_{n-1}=a_{n-1}$};
    \node[main,minimum size=1 cm] (6) [left=of 3] {$c_{n-1}=a_0$}; 
    \node[main, minimum size=1cm] (7) [below right=of 2] {$a_i = a_{i+n-1}$};
    \node[below=0.1cm of 7] (7') {\scriptsize for some};
    \node[below=0.001cm of 7'] (7'') {\scriptsize $1 \le i \le n-2$};
    \node[main,minimum size=1 cm] (8) [above right=of 2] {$a_i=a_{i+n-1}$};
    \node[below=0.1cm of 8] (8') {\scriptsize for all};
    \node[below=0.001cm of 8'] (8'') {\scriptsize $1 \le i \le n-2$};
    \node[main,minimum size=1 cm] (9) [below right= of 8] {$f=f_0$};
    
    \draw[violet,->] (1) -- (4);
    \draw[violet,->] (1) -- (8);
    \draw[violet,<->] (2) -- (4);
    \draw[violet,->] (2) -- (8);
    \draw[violet,<->] (3) -- (4);
    \draw[violet,->] (5) -- (2);
    \draw[violet,->] (6) -- (3);
    \draw[violet,->] (3) ..controls(0,-5).. (8);
    \draw[violet,->] (7) ..controls(6,1).. (1);
    \draw[violet,<->] (7) ..controls(3.5,-3).. (8);
    \draw[violet,->] (9) ..controls +(2,-8) and +(-8,-8).. (5);
    \end{tikzpicture}}
    \caption{The graph $G(\mathcal{A}_r)$. The nodes represent possible identifications, and the edges represent following identifications, after foldings of type $1$. Since the identifications of vertices that follow from the identification $a_i=a_{i+n-1}$ do not depend on $i$ for all $i\in\{1,\dots,n-2\}$, we do not draw all of them}
    \label{f_maximal_dependencies_graph}
\end{figure}
This shows that condition $(1)$ holds.

Finally, for condition $(2)$, we need to show that $H$ is of infinite index, and it does not fix any point in the open unit interval. To show the first claim, note that since $[F_n,F_n]$ is a simple subgroup of infinite index of $F_n$ (\cite{Higman1974,B1}), it follows that the subgroups of finite index of $F_n$ contain the commutator $[F_n,F_n]$. So, if $H$ was a subgroup of finite index of $F_n$, then by Corollary \ref{c_gen_closure_commutator}, $\mathcal{L}(H)=\mathcal{A}_r$ would have exactly $n-1$ inner vertices. Since $\mathcal{A}_r$ has $2n$ inner vertices, $H$ is not a subgroup of finite index of $F_n$.

For the latter claim, note that for any inner $n$-ary word $u$, there exists some inner $n$-ary word $v$ such that $u^+=v^+$ and the intervals $[u]$ and $[v]$ are disjoint. To see this, note that for every inner vertex $a$ of $\mathcal{A}_r$, there exists a path ending in $a$ in $\mathcal{A}_r$ whose label starts with the string $00$, and there exists another path ending in $a$ in $\mathcal{A}_r$ whose label starts with $n-1$. So, for every point $x\in (0,1)$, we can find some inner $n$-ary word $u$ such that $x\in [u]$, and an element $h\in H$ such that $h$ maps $[u]$ to a disjoint interval, and in particular $h$ does not fix $x$.

\end{proof}

%% file: Openproblems.tex
\section{Further research and subsequent developments}
\label{section:open_problems}

We conclude with several questions suggested by the generation criterion proved in this paper, and with some comments on subsequent developments.

Theorem \ref{gen_t_main_2} gives a sufficient criterion for a finite set $X\subseteq F_n$ to generate $F_n$. The criterion consists of the closure condition, the abelianization condition, the tuple condition, and the semi-core condition. The first open problem is whether the semi-core condition is superfluous.

\begin{Problem}\label{p_semicore_automatic}
Let $X\subseteq F_n$ be finite, and let $H=\langle X\rangle$. Suppose that conditions $(1)$, $(2)$ and $(3)$ of Theorem \ref{gen_t_main_2} hold. Does it follow that $H=F_n$?
\end{Problem}

A positive answer would give a  solution to the generation problem in $F_n$. Indeed, in that case, conditions $(1)$, $(2)$ and $(3)$ would be necessary and sufficient for $H$ to be equal to $F_n$, and they are effectively checkable for finitely generated subgroups using the core, the abelianization, and the tuple algorithm.

For Thompson's group $F=F_2$, the answer to Problem \ref{p_semicore_automatic} is positive \cite{G1}. This yields an effective necessary and sufficient criterion for generation in $F$. In subsequent work on maximal subgroups of $F$ \cite{G2}, it was proved that, for subgroups of $F=F_2$, the conditions $(1)$ and $(2)$ already imply  condition $(3)$. Thus, in $F$, there is an even shorter criterion: the closure and the image in the abelianization determine whether a subgroup is the whole group.

It is natural to ask whether the same short criterion holds for the groups $F_n$.

\begin{Problem}\label{p_short_generation_criterion}
Let $H\leq F_n$. Suppose that
\[
    [F_n,F_n]\subseteq \Cl(H)
\]
and
\[
    H[F_n,F_n]=F_n.
\]
Does it follow that $H=F_n$?
\end{Problem}

A positive answer to Problem \ref{p_short_generation_criterion} would remove the tuple condition from the final statement of the generation criterion and would give a short generation test for $F_n$, analogous to the criterion known for $F$.

Problem \ref{p_short_generation_criterion} is also closely related to maximal subgroups of infinite index. In fact, a positive answer is equivalent to the assertion that every maximal subgroup of infinite index of $F_n$ is closed.

Indeed, assume first that Problem \ref{p_short_generation_criterion} has a positive answer, and let $M$ be a maximal subgroup of infinite index in $F_n$. 
In particular, $M$ is not contained in any proper finite index subgroup of $F_n$. Hence, $M[F_n,F_n]=F_n$.
If $M$ is not closed, then by maximality $\Cl(M)=F_n$. Hence $M$ satisfies the two hypotheses of Problem \ref{p_short_generation_criterion}, and so $M=F_n$, a contradiction. Therefore $M$ is closed.

Conversely, suppose that every maximal subgroup of infinite index in $F_n$ is closed. Let $H< F_n$ be a strict subgroup such that
\[
    [F_n,F_n]\subseteq \Cl(H)
    \quad\text{and}\quad
    H[F_n,F_n]=F_n.
\]
Then $\Cl(H)=F_n$. Moreover, since $H[F_n,F_n]=F_n$, $H$ is not contained in any proper finite-index subgroup of $F_n$. Since $F_n$ is finitely generated, Zorn's lemma implies that $H$ is contained in a maximal subgroup $M$ of $F_n$. This maximal subgroup cannot have finite index, and hence is closed by assumption. But then
\[
    F_n=\Cl(H)\leq \Cl(M)=M,
\]
a contradiction. Thus no such proper subgroup $H$ exists, and Problem \ref{p_short_generation_criterion} has a positive answer.

Therefore the short generation criterion in Problem \ref{p_short_generation_criterion} holds if and only if every maximal subgroup of infinite index of $F_n$ is closed. This equivalence is one of the reasons the problem is important: it connects the generation problem directly to the structure of maximal subgroups.

The question whether every maximal subgroup of $F_n$ of infinite index is closed is relevant to a broader classification problem for maximal subgroups of Higman--Thompson groups. After the completion of the present work, the first author formulated the following problem.

\begin{Problem}\label{p_minimal_maximal_F}
Is every maximal subgroup of infinite index of Thompson's group $F$ which acts minimally on the open interval $(0,1)$ isomorphic to a Higman--Thompson group $F_m$, for some $m\geq 2$?
\end{Problem}

The point-stabilizer examples are excluded by the minimality hypothesis. Problem \ref{p_minimal_maximal_F} is consistent with all examples of maximal subgroups of $F$ which appear in the literature. More generally, the same pattern occurs in the known examples of maximal subgroups of infinite index in Higman--Thompson groups which do not arise as point stabilizers. The maximal subgroups constructed in the present paper are isomorphic to $F_{2n-1}$. In addition, after the completion of the present work, the first author proved that the Jones subgroup of $F_3$ used by Aiello and Nagnibeda is isomorphic to $F_4$ (the result will appear in a future paper).

If every maximal subgroup of infinite index of $F_n$ is closed, then the study of closed subgroups  becomes even more central to the classification of maximal subgroups. This motivates the following broader problems.

\begin{Problem}\label{p_closed_maximal_subgroups}
Describe the  maximal subgroups of infinite index of $F_n$. In particular, determine under which natural hypotheses such a subgroup must be isomorphic to a Higman--Thompson group $F_m$. Is this true, for example, for all (closed) maximal subgroups of infinite index which act minimally on $(0,1)$?
\end{Problem}

Several results obtained by the first author after the completion of this paper support this direction. The first author has proved that a wide class of closed subgroups of $F$, and more generally of $F_n$, are isomorphic to Higman--Thompson groups. These results apply, in particular, to the diagram groups associated with the  classes $\mathcal C_n$ of indecomposable geometrically fast groups generated by $n$ one-bump functions. For the terminology of geometrically fast sets, dynamical diagrams, and the classes $\mathcal C_n$, see \cite{BBKMZ,BelkStott}. Belk and Stott proved that fast one-bump groups are isomorphic to diagram groups determined by their dynamical diagrams, and used this description to prove that pseudo-$F_4$ is isomorphic to $F_4$ \cite{BelkStott}. Although the original fast one-bump groups are not, in general, closed subgroups of $F$, the corresponding diagram groups can be realized as closed subgroups of $F$ using methods from \cite{GSclosed}. The first author proved that, for every $n\geq 2$, every group in the class $\mathcal C_n$ is isomorphic to $F_n$. The cases $n=2,3$ were previously known, and the case $n=4$ is the theorem of Belk and Stott. Thus, for the classes $\mathcal C_n$, this answers the strong form of a question of Brin and Zaremsky from \cite{BurilloBuxNucinkis2018}. The first author has also constructed, for every pair $m,n$ with $n\geq m\geq 2$, a maximal subgroup of $F_m$ isomorphic to $F_n$. The proofs of these results will appear in subsequent papers.

These later results make essential use of the generation criterion and core methods developed in the present paper, together with additional arguments.

%% file: Bibliography.tex
\bigskip

\noindent\textsc{Gili Golan}\\
Department of Mathematics,\\
Ben-Gurion University of the Negev,\\
\texttt{golangi@bgu.ac.il}

\medskip

\noindent\textsc{Eytan Sapir}\\
Department of Mathematics,\\
Ben-Gurion University of the Negev,\\
\texttt{esapir@post.bgu.ac.il}

%% file: main.bbl
\begin{thebibliography}{99}

\bibitem{AN} 
V.~Aiello and T.~Nagnibeda, 
\newblock \emph{On the oriented Thompson subgroup $\vec{F}_3$ and its relatives in higher Brown-Thompson groups}, 
\newblock Journal of Algebra and Its Applications, \textbf{21}(07), 2250139 (2022).

\bibitem{AN2}
V.~Aiello and T.~Nagnibeda,
\newblock On the 3-colorable subgroup $\mathcal{F}$ and maximal subgroups of Thompson's group $F$,
\newblock \emph{Annales de l'Institut Fourier} \textbf{73} (2023), no.~2, 783--828.
\newblock \doi{10.5802/aif.3555}

\bibitem{BBKMZ}
C.~Bleak, M.~G.~Brin, M.~Kassabov, J.~Tatch Moore, and M.~C.~B.~Zaremsky,
\newblock \emph{Groups of fast homeomorphisms of the interval and the ping-pong argument},
\newblock J. Comb. Algebra \textbf{3} (2019), no.~1, 1--40.
\newblock \doi{10.4171/JCA/25}


\bibitem{BelkForest2023}
J.~Belk and B.~Forrest,
\newblock \emph{Divergence in the Brown--Thompson groups},
\newblock Transform. Groups (2023), published online.
\newblock \doi{10.1007/s00031-023-09839-8}

\bibitem{BelkStott}
J. Belk and L. Stott,
\emph{Pseudo-$F_4$ is isomorphic to $F_4$},
arXiv:2303.16868.


\bibitem{Bleak2008}
Collin Bleak,
\newblock A geometric classification of some solvable groups of homeomorphisms,
\newblock {\em Journal of the London Mathematical Society}, Series 2, vol. 78 (2008), no. 2, pp. 352--372.

\bibitem{BrownGeoghegan1984}
K.~S.~Brown and R.~Geoghegan,
\newblock \emph{An infinite-dimensional torsion-free ${\mathrm{FP}}_\infty$ group},
\newblock Invent. Math., \textbf{77} (1984), 367--381.

\bibitem{B1} 
K.~S.~Brown, 
\newblock \emph{Finiteness properties of groups}, 
\newblock Journal of Pure and Applied Algebra, \textbf{44} (1987).

\bibitem{BurilloBuxNucinkis2018}
J. Burillo, K.-U. Bux and B. Nucinkis,
\emph{Cohomological and metric properties of groups of homeomorphisms of $\mathbb R$},
Oberwolfach Reports \textbf{15} (2018), 1579--1633.


\bibitem{CFP1996}
J.~W.~Cannon, W.~J.~Floyd, and W.~R.~Parry.
\newblock Introductory notes on Richard Thompson's groups.
\newblock \emph{L'Enseignement Math\'{e}matique}, 42(3):215--256, 1996.

\bibitem{G1}
G.~Golan-Polak,
\newblock \emph{The generation problem in Thompson group $F$},
\newblock Memoirs of the AMS, \textbf{292}, no.~1451 (2023).

\bibitem{G2}
G.~Golan-Polak,
\newblock \emph{On maximal subgroups of Thompson's group $F$},
\newblock Groups, Geometry, and Dynamics  19 (3), 797-860.
\newblock \doi{10.4171/GGD/795}

\bibitem{GS1}
G.~Golan and M.~Sapir,
\newblock \emph{On subgroups of R. Thompson group $F$},
\newblock Trans. Amer. Math. Soc. \textbf{369} (2017), 8857--8878.

\bibitem{GSclosed}
G.~Golan-Polak and M.~Sapir,
\newblock \emph{On closed subgroups of the R. Thompson group $F$},
\newblock Israel J. Math. \textbf{267} (2025), 35--84.
\newblock \doi{10.1007/s11856-024-2692-z}

\bibitem{GubaSapir1997}
V.~S.~Guba and M.~V.~Sapir,
\newblock \emph{Diagram groups},
\newblock Memoirs Amer. Math. Soc., \textbf{130}(620), 1997.

\bibitem{GubaSapir1999subgroups}
V.~S.~Guba and M.~V.~Sapir,
\newblock \emph{On subgroups of R. Thompson's group $F$ and other diagram groups},
\newblock Sb. Math. (English), \textbf{190}(8) (1999), 1077--1130.

\bibitem{Higman1974}
Graham Higman,
\newblock {\em Finitely Presented Infinite Simple Groups},
\newblock Notes on Pure Mathematics, no.~8, Australian National University, Canberra, 1974.

\bibitem{LyndonSchupp1977}
R.~C. Lyndon and P.~E. Schupp,
\newblock {\em Combinatorial Group Theory},
\newblock Ergebnisse der Mathematik und ihrer Grenzgebiete, Vol.~89,
\newblock Springer--Verlag, Berlin, 1977.

\bibitem{Sav1}
D.~Savchuk,
\newblock \emph{Some graphs related to Thompson's group $F$},
\newblock In: Combinatorial and Geometric Group Theory, 279--296, Trends Math., Birkhäuser/Springer, Basel, 2010.

\bibitem{Sav2}
D.~Savchuk,
\newblock \emph{Schreier graphs of actions of Thompson's group $F$ on the unit interval and on the Cantor set},
\newblock Geom. Dedicata, \textbf{175} (2015), 355--372.

\bibitem{Stein}
M. Stein, \emph{Groups of piecewise linear homeomorphisms}, Trans. Amer. Math. Soc. \textbf{332} (1992), no.~2, 477--514.

\bibitem{Wladis2007}
C.~Wladis,
\newblock \emph{Thompson's group $F(n)$ is not minimally almost convex},
\newblock New York J. Math., \textbf{13} (2007), 319--326.
\newblock \url{https://nyjm.albany.edu/j/2007/13-19.pdf}

\end{thebibliography}
